\documentclass[12pt,reqno]{amsart}
\usepackage[left=3cm,top=2cm,right=3cm,bottom=2cm]{geometry}
\usepackage{amssymb}
\usepackage{amsbsy}
\usepackage{epsfig}
\usepackage{tikz}
\usepackage[english]{babel}    
\usepackage{enumitem}
\usepackage{multicol}
\usepackage{blkarray}
\usepackage{tikz}
\usepackage{mathtools}
\usepackage{amsthm}
\newtheorem{teore}{Theorem}[section]
\newtheorem{defn}[teore]{Definition}
\newtheorem{lemat}[teore]{Lemma}

\newtheorem{prop}[teore]{Proposition}
\newtheorem{remark}[teore]{Remark}
\begin{document}
\DeclarePairedDelimiter\ceil{\lceil}{\rceil}
\DeclarePairedDelimiter\floor{\lfloor}{\rfloor}
\title{Null Decomposition of Unicyclic Graphs}
\subjclass{05C50, 15A18,15A03}
\keywords{ Unicyclic graphs, null space, nullity, support.}
\author[L. E. Allem]{L. Emilio Allem}\email{emilio.allem@ufrgs.br}
\address{UFRGS - Universidade Federal do Rio Grande do Sul, Instituto de Matem\'atica, Porto Alegre, Brazil}
\author[D. A. Jaume]{Daniel A. Jaume}\email{djaume@unsl.edu.ar}
\address{Universidad Nacional de San Luis, Departamento de Matem\'aticas, San Luis, Argentina}
\author[G. Molina]{Gonzalo Molina}\email{lgmolina@unsl.edu.ar}
\address{Universidad Nacional de San Luis, Departamento de Matem\'aticas, San Luis, Argentina}
\author[M. M. Toledo]{Maikon M. Toledo}\email{maikon.toledo@ufrgs.br}
\address{UFRGS - Universidade Federal do Rio Grande do Sul, Instituto de Matem\'atica, Porto Alegre, Brazil}
\author[V. Trevisan]{Vilmar Trevisan}\email{trevisan@mat.ufrgs.br }
 \address{UFRGS - Universidade Federal do Rio Grande do Sul, Instituto de Matem\'atica, Porto Alegre, Brazil}

\begin{abstract}
In this work we obtain basis for the null space of unicyclic graphs. We extend the null decomposition of trees from \cite{tree} for unicyclic graphs. As an application, we obtain closed formulas for the independence and matching numbers of unicyclic graphs just using the support of the graph.
\subjclass{05C50, 15A18}
\keywords{ Unicyclic graphs, null space, nullity, support.}
\end{abstract}

\maketitle

\section{Introduction}

Analysing the eigenspace of graphs to obtain structural properties is a standard technique in the area of spectral graph theory. One of the most relevant examples is the pioneer work of Fiedler \cite{Fie73,Fie75}, where the eigenspace associated with the algebraic connectivity of the graph was used to obtain a vertex set decomposition. The idea was called by Nikiforov \cite{Nik13} a mathematical \emph{golden strike} due to its tremendous impact in several areas of mathematics as well as for numerous applications. One of the most important results on the Laplacian matrix is Fiedler's Monotocity Theorem \cite{lal}. This theorem relates the structure of the vertices of a graph with any eigenvector associated with the second least eigenvalue of the Laplacian matrix. In 1988, Power \cite{power} also obtained relationships between the eigenvectors and properties of the graphs.

Recently, in \cite{tree}, Jaume and Molina studied the null space of the adjacency matrix of trees and they presented a null decomposition of trees. In general, this null decomposition divides a tree into two forests (one of the forests can be empty), one composed by singular trees and the other composed by non-singular trees.
The technique used was the analysis of the \emph{support} of the tree, where the \emph{support} is defined as the subset of vertices for which at least one of its corresponding coordinates of the eigenvectors of the null space of the adjacency matrix is nonzero.

Understanding the support of a graph, in some sense,  resembles the work of Fiedler, who obtained information of the graph based on the signs of the coordinates of the eigenvector. Here, we obtain information of the graph based on whether an eigenvector coordinate is zero or nonzero. It happens that the support provides important information about the graph.

As an application, in \cite{tree}, the null decomposition was used to obtain closed formulas for two classical parameters.  The first one is the cardinality of the maximum independent set of a graph $G$, denoted by $\alpha(G)$, and the second one, the cardinality of the maximum matching  of a graph $G$, denoted by $\nu(G)$. Several mathematicians have studied $\alpha(G)$ \cite{Alon:1998:AIN:286563.286564,Frieze:1990:INR:82922.82935,SHEARER198383} and $\nu(G)$ \cite{doob,ming}.
In order to compute $\alpha(G)$,  most algorithms first find a maximum independent set and then count the number of elements in this set \cite{far,sage}. In particular this is the strategy used in SageMath and Maple softwares. In the same softwares, the problem of computing $\nu(G)$ is solved using Edmond's algorithm \cite{edmond}. It also computes a maximum matching, and then counts the number of elements in this matching.

Here, we compute  $\nu(G)$ and $\alpha(G)$ directly using linear algebra without finding maximum sets. In \cite{Maikon}, we obtained closed formulas for the independence and matching numbers of a unicyclic graph based on the support of its subtrees, here, we also present closed formulas for these parameters of a unicyclic graph, the difference is that now, we only use the support of the unicyclic graph $G$.

To give a glimpse of our results, we present the following example. First, we compute the support, core (neighborhood of the support) and $N$-vertices (the remaining vertices) of the graph $G$ of Figure \ref{glimpse}. Consider $C$ the cycle of $G$. 
$Supp(G)=\lbrace{a,b,e,f,i}\rbrace$, $Core(G)=\lbrace{c,v,g,h}\rbrace$ and $V(\mathcal{G}_N(G))=\lbrace{d,\ell,j,m,n,o,p,r,q}\rbrace$.

\usetikzlibrary{shapes,snakes}
\tikzstyle{vertex}=[circle,draw,minimum size=1pt,inner sep=1pt]
\tikzstyle{edge} = [draw,thick,-]
\tikzstyle{matched edge} = [draw,snake=zigzag,line width=1pt,-]
\begin{figure}[h!]
\begin{center}
\begin{scriptsize}
\begin{center}
\begin{tikzpicture}[scale=1,auto,swap]
\node[draw,circle,label=below left:] (1) at (-1,-1) {$c$};
\node[draw,rectangle,label=below left:] (2) at (-2,-2) {$a$};
\node[draw,rectangle, label=below left:] (3) at (-2,0) {$b$};
\node[draw,circle,label=below left:] (4) at (0,-1) {$v$};
\node[draw,star,star points=9,star point ratio=0.6,label=below left:] (5) at (1,-1) {$d$};
\node[draw,rectangle,label=below left:] (6) at (-1,-2) {$e$};
\node[draw,rectangle,label=below left:] (7) at (1,-2) {$f$};
\node[draw,circle,label=below left:] (8) at (0,-3) {$g$};
\node[draw,circle,label=below left:] (9) at (2,-3) {$h$};
\node[draw,rectangle,label=below left:] (10) at (3,-2) {$i$};
\node[draw,star,star points=9,star point ratio=0.6,label=below left:] (11) at (0,0) {$j$};
\node[draw,star,star points=9,star point ratio=0.6,label=below left:] (12) at (1,0) {$\ell$};
\node[draw,star,star points=9,star point ratio=0.6,label=below left:] (13) at (2,0) {$m$};
\node[draw,star,star points=9,star point ratio=0.6,label=below left:] (14) at (3,0) {$n$};
\node[draw,star,star points=9,star point ratio=0.6,label=below left:] (15) at (2,-1) {$o$};
\node[draw,star,star points=9,star point ratio=0.6,label=below left:] (16) at (3,-1) {$p$};
\node[draw,star,star points=9,star point ratio=0.6,label=below left:] (17) at (-1,-3) {$q$};
\node[draw,star,star points=9,star point ratio=0.6,label=below left:] (18) at (-2,-3) {$r$};
\node at (1,-3.6) {$G$};
\foreach \from/\to in {1/2,1/3,1/4,4/5,4/6,4/7,8/6,8/7,9/7,9/10,17/18,17/8,11/4,12/5,5/15,15/13,13/14,14/16} {
 \draw (\from) -- (\to);}
 \foreach \source / \dest in {1/2,4/11,5/12,13/15,16/14,17/18,7/8,9/10}
   \path[matched edge] (\source) -- (\dest);
\end{tikzpicture}
\caption{Unicyclic graph $G$ and its support.}\label{glimpse}
\end{center}
\end{scriptsize}
 \end{center}
\end{figure}

Notice that, $V(C)\nsubseteq{V(\mathcal{G}_N(G))}$. Therefore, by theorems \ref{thm1} and \ref{thm2}, we have that the independence and matching numbers of $G$ can be computed as follows

\begin{eqnarray*}
\alpha{(G)}&=&\vert{Supp(G)}\vert{+}\ceil*{\frac{\vert{V(\mathcal{G}_N(G))}\vert-\vert{Supp(G)\cap{Core(G)}}\vert}{2}}=5+\ceil*{\frac{9-0}{2}}=10\\
\nu{(G)}&=&\vert{Core(G)}\vert{+}\floor*{\frac{\vert{V(\mathcal{G}_N(G))}\vert-\vert{Supp(G)\cap{Core(G)}}\vert}{2}}=4+\floor*{\frac{9-0}{2}}=8.
\end{eqnarray*}

Our main goal in this paper is to extend the theory that Jaume and Molina developed for trees in \cite{tree} for unicyclic graphs. More specifically, we will obtain structural information of unicyclic graphs using their null space. That is, we extend the definition of null decomposition of trees from \cite{tree} for graphs in general. This graph decomposition divides the graph into two subgraphs, a subgraph generated by the closed neighborhood of the support and another subgraph generated by the remaining vertices. As an application we obtain closed formulas for the independence and matching numbers of unicyclic graphs. These formulas allows one  to compute independence and matching numbers using basically only the support of the graph.

Next, we give an outline of this paper. In Section \ref{sec:pre}, we present some basic notations and definitions. In the sections \ref{sec:nulltipoI} and \ref{sec:nulltipoII}, we provide linear algebra results for unicyclic graphs of types I and II, respectively. More precisely, we obtain a basis of the null space of unicyclic graphs. These results will be important for the following sections \ref{sec:suptipoI} and \ref{sec:suptipoII}, where we study the support, core and $N$-vertices of unicyclic graphs of Type $I$ and Type $II$, respectively.

As applications of this null decomposition, in section \ref{sec:nulldecom}, we obtain our main results, which are closed formulas for the matching and independence numbers of unicyclic graphs. These formulas depend on the support, core and $N$-vertices of unicyclic graphs. It turns out that these formulas are similar to the formulas obtained by Jaume and Molina \cite{tree} for trees.

\section{Preliminaries}\label{sec:pre}

Let $G=(V,E)$ be a simple graph of order $n$ and its adjacency matrix $A(G)$. Denote by $\varepsilon_{\lambda}$ the $\lambda$-eigenspace of $A(G)$, that is, $\varepsilon_{\lambda}=\lbrace{x\in{\mathbb{R}^n}: A(G)x=\lambda{x}}\rbrace$. The  $0$-eigenspace  ($\varepsilon_0$) is the focus of our work and it will be denoted by $\mathcal{N}(G)$. The nullity of a graph $G$, denoted by $\eta(G)$, is the multiplicity of the eigenvalue zero in the spectrum of $A(G)$, or, equivalently, the dimension of $\mathcal{N}(G)$. The graph $G$ is called singular if $A(G)$ is a singular matrix or $\eta(G)>0.$ Otherwise, the graph $G$ is called nonsingular.

A set $I \subset V $ of vertices of a graph $G$ is an independent set in $G$ if no two vertices in $I$ are adjacent. A maximum independent set is an independent set of maximum cardinality. The cardinality of any maximum independent set in $G$, denoted by $\alpha(G)$, is called the independence number of $G$.
$\mathcal{I}(G)$ denotes the set of all maximum independent sets of $G$.

A matching $M$ in $G$ is a set of pairwise non-adjacent edges, that is, no two edges in $M$ share a common vertex. A maximum matching is a matching of largest cardinality in $G$. The matching number of $G$, denoted by $\nu(G)$, is the size of a maximum matching in $G$. $\mathcal{M}(G)$ denotes the set of all maximum matchings of $G$.
A vertex is saturated by $M$, if it is an endpoint of one of the edges in the matching $M$. Otherwise the vertex is said non-saturated. Moreover, a matching is said to be perfect if it saturates all vertices of $G$. The set of vertices of $G$ that are not saturated by some maximum matching is known as Edmond-Gallai of $G$ and it is denoted by $EG(G)$. And, a vertex $v$ of $G$ is called matched if it is saturated by all maximum matching, that is, $v\notin EG(G)$.

The notion of support of a vector is a natural one and crucial for our purposes.

\begin{defn}\cite{tree}
Let $G$ be a graph with $n$ vertices and let $x$ be a vector of $\mathbb{R}^{n}$. The support of $x$ in $G$ is $$Supp_{G}(x)=\lbrace{v\in{V(G)}:x_{v}\neq{0}}\rbrace.$$

\noindent Let $S$ be a subset of $\mathbb{R}^{n}$. Then the support of $S$ in $G$ is $$Supp_{G}(S)=\bigcup_{x\in{S}}{Supp_{G}(x)}.$$
\end{defn}

The following result shows that in order to compute the support of an eigenspace of $A(G)$, it is enough to analyse the coordinates of the vectors of a basis of this eigenspace.
\begin{lemat}\cite{tree}\label{suppcompute}
Let $G$ be a graph and $\lambda$ an eigenvalue of $A(G)$. Let $\mathcal{B}=\lbrace{b_1,\ldots,b_k}\rbrace$ be a basis of $\varepsilon_{\lambda}$, then $Supp_G(\varepsilon_{\lambda})=Supp_G(\mathcal{B}).$
\end{lemat}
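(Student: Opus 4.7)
The plan is to prove the set equality by establishing both inclusions, and the argument is pure linear algebra requiring no graph-theoretic input beyond the definition of support.

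First I would handle the easy inclusion $Supp_G(\mathcal{B})\subseteq Supp_G(\varepsilon_\lambda)$. This is immediate from the definition: since every basis vector $b_i$ lies in $\varepsilon_\lambda$, the set $\mathcal{B}$ is a subset of $\varepsilon_\lambda$, and taking the union of supports over a larger collection of vectors can only enlarge the resulting set. Concretely, if $v\in Supp_G(b_i)$ for some $i$, then $(b_i)_v\neq 0$ exhibits a vector in $\varepsilon_\lambda$ with nonzero $v$-coordinate, so $v\in Supp_G(\varepsilon_\lambda)$.

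For the reverse inclusion $Supp_G(\varepsilon_\lambda)\subseteq Supp_G(\mathcal{B})$, I would proceed by contrapositive. Suppose $v\notin Supp_G(\mathcal{B})$, meaning $(b_i)_v=0$ for every $i=1,\ldots,k$. Take any $x\in \varepsilon_\lambda$. Because $\mathcal{B}$ is a basis of $\varepsilon_\lambda$, there exist scalars $c_1,\ldots,c_k\in\mathbb{R}$ such that $x=\sum_{i=1}^{k}c_i b_i$. Reading off the $v$-coordinate, we obtain
\[
x_v \;=\; \sum_{i=1}^{k} c_i (b_i)_v \;=\; \sum_{i=1}^{k} c_i \cdot 0 \;=\; 0.
\]
Since $x\in\varepsilon_\lambda$ was arbitrary, no vector of the eigenspace has a nonzero coordinate at $v$, so $v\notin Supp_G(\varepsilon_\lambda)$. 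This proves the contrapositive of the desired inclusion.

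Combining both inclusions gives the claimed equality. There is no real obstacle here; the lemma essentially records the useful observation that the support of a subspace is determined by any spanning (in particular, any basis) set, which is what makes it computationally meaningful in the later sections where an explicit basis of $\mathcal{N}(G)$ for unicyclic graphs will be constructed.
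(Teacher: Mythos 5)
Your proof is correct and complete; the paper itself states this lemma without proof (citing \cite{tree}), and your two-inclusion argument --- the trivial containment $\mathcal{B}\subseteq\varepsilon_\lambda$ in one direction, and expanding an arbitrary eigenvector in the basis to kill the $v$-coordinate in the other --- is exactly the standard argument the cited reference relies on. Nothing further is needed.
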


In this present paper, our concern is the support of the null space of $A(G)$, focusing in $\mathcal{N}(G)$, the $Supp_G(\mathcal{N}(G))$, which, for purposes of notation, will be denoted by $Supp(G)$. In practice, in order to compute $Supp(G)$, we use Lemma \ref{suppcompute}, we compute a basis of the null space and consider the non-null entries of the vectors in the basis to obtain the support.

We know from \cite{tree} that the support of a tree is an independent set of vertices and we state the result as lemma for future reference.

\begin{lemat}\cite{tree}
\label{tindependent}
Let $T$ be a tree, then $Supp(T)$ is an independent set of $T$.
\end{lemat}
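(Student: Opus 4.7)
My plan is to prove this by induction on $n = |V(T)|$, peeling off a pendant vertex together with its neighbor.

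For the base case $n \leq 1$ the statement is vacuous. For the inductive step, I would pick any leaf $u$ of $T$ and let $v$ be its unique neighbor. The first key observation, which comes for free from the eigenvalue equation, is that for every $x \in \mathcal{N}(T)$ the coordinate at the leaf gives $0 = (A(T)x)_u = x_v$, so $v \notin Supp(T)$. This already handles the edge $uv$: it cannot witness failure of independence.

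Next, let $T'' = T - \{u,v\}$, which is a forest on fewer vertices. I would show that the restriction map $\phi : \mathcal{N}(T) \to \mathcal{N}(T'')$, $x \mapsto x|_{V(T'')}$, is a linear isomorphism. To see it is well-defined, note that for $w \in V(T'')$ the neighborhood of $w$ in $T''$ differs from its neighborhood in $T$ only possibly by $v$, whose coordinate vanishes by the previous paragraph; hence $(A(T'')\phi(x))_w = (A(T)x)_w = 0$. Injectivity follows because $\phi(x) = 0$ forces $x$ to be supported on $\{u,v\}$ with $x_v = 0$, and then the leaf equation at $u$ forces $x_u = 0$ as well. For surjectivity, given $y \in \mathcal{N}(T'')$ I would extend it by setting the $v$-coordinate to $0$ and the $u$-coordinate to $-\sum_{w \sim_T v,\, w \neq u} y_w$; a direct check at every vertex confirms the extension lies in $\mathcal{N}(T)$.

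From the isomorphism, $Supp(T) \cap V(T'') = Supp(T'')$ (since $\phi$ preserves nonzero coordinates in both directions). Applying Lemma~\ref{suppcompute} and the inductive hypothesis to each component of $T''$ gives that $Supp(T'')$ is independent in $T''$. Finally, I would classify the edges of $T$: edges inside $T''$ cannot both be in $Supp(T)$ by induction, and edges incident to $v$ are safe because $v \notin Supp(T)$. This exhausts all edges (the only edge incident to $u$ is $uv$), so $Supp(T)$ is independent in $T$.

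The main subtlety I anticipate is verifying surjectivity of $\phi$ cleanly, since one must both choose the right value at the leaf $u$ and check the eigenvalue equation at $v$; beyond that the argument is essentially a bookkeeping of which coordinates are forced to vanish, so I do not expect any deeper obstacle.
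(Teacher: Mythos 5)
Your proof is correct, but note that the paper does not actually prove Lemma \ref{tindependent} at all: it is imported from \cite{tree} as a black box, so there is no in-text argument to match yours against. Your leaf-deletion induction is a legitimate, self-contained alternative, and its three pillars all hold up: the eigenvalue equation at a leaf $u$ forces $x_v=0$ for its neighbor $v$ and every $x\in\mathcal{N}(T)$; the restriction map $\phi:\mathcal{N}(T)\to\mathcal{N}(T-\lbrace{u,v}\rbrace)$ is a well-defined isomorphism (your extension formula for surjectivity checks out at $u$, at $v$, and at every vertex of $T''$); and the edge classification is exhaustive. Two cosmetic corrections. First, in the injectivity step it is the equation at $v$, not "the leaf equation at $u$", that kills $x_u$: the row of $A(T)x$ indexed by $u$ only gives $x_v=0$, while the row indexed by $v$ reads $x_u+\sum_{w\sim v,\,w\neq u}x_w=0$, and since every such $w$ lies in $V(T'')$ where $x$ vanishes, this yields $x_u=0$. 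Second, the forest step does not need Lemma \ref{suppcompute}; what you are really using is that the null space of a disjoint union is the direct sum of the null spaces of its components, so $Supp(T'')$ is the union of the supports of the component trees, each independent by the inductive hypothesis and with no edges between components. Compared with the route one typically sees (e.g.\ deducing independence of the support from the identity $EG(T)=Supp(T)$ of Lemma \ref{t11} together with the Gallai--Edmonds structure of forests), your argument is purely linear-algebraic and arguably more elementary, at the cost of having to verify the isomorphism by hand.
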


It is very important to notice that, unlike trees, the support of unicyclic graphs is not always an independent set. For example, consider $G$ the unicyclic graph of Figure \ref{figur1}. The support of $G$ is the set $\lbrace{u,v,z,w,b,c,d,e}\rbrace $, which is not an independent set.

We will characterize unicyclic graphs whose support is an independent set (see propositions \ref{ti1}, \ref{ti4}, \ref{ti6}, \ref{ti3} and \ref{ti2}). We will see that the only unicyclic graphs whose support is not an independent set are the unicyclic graphs of Type $II$ with a cycle of length equal to $4t$, where $t\in\mathbb{N}$ (see Proposition \ref{ti5}).

\usetikzlibrary{shapes,snakes}
\tikzstyle{vertex}=[circle,draw,minimum size=1pt,inner sep=1pt]
\tikzstyle{edge} = [draw,thick,-]
\tikzstyle{matched edge} = [draw,snake=zigzag,line width=1pt,-]
\begin{figure}[h!]
\begin{scriptsize}
\begin{center}
\begin{tikzpicture}[scale=1.2,auto,swap]
\node[draw,rectangle,label=below left:] (1) at (0,0) {$u$};
\node[draw,rectangle,label=below left:] (2) at (1,0) {$v$};
\node[draw,rectangle,label=below left:] (3) at (1,1) {$w$};
\node[draw,rectangle,label=below left:] (4) at (0,1) {$z$};
\node[draw,circle,label=below left:] (5) at (-1,0) {$a$};
\node[draw,rectangle,label=below left:] (6) at (-2,1) {$b$};
\node[draw,rectangle,label=below left:] (7) at (2,1) {$c$};
\node[draw,rectangle,label=below left:] (8) at (3,1) {$d$};
\node[draw,rectangle,label=below left:] (9) at (4,1) {$e$};
\node[draw,circle,label=below left:] (10) at (2,0) {$f$};
\foreach \from/\to in {1/2,2/3,3/4,4/1,5/6,5/4,10/7,10/8,10/9,10/3}{
\draw (\from) -- (\to);}
\end{tikzpicture}
\end{center}
\end{scriptsize}
 \caption{ {\small Unicyclic graph whose support is not an independent set.}}
\label{figur1}
\end{figure}
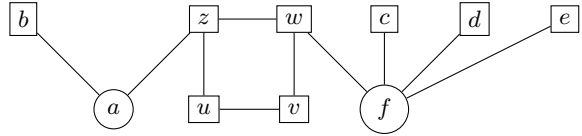

The next stated result shows that only the vertices of the support of a tree are not saturated by some maximum matching.
\begin{lemat}\cite{Maikon}
\label{t11}
Let $T$ be a tree, then $EG(T)=Supp(T)$.
\end{lemat}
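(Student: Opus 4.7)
The plan is to prove the two inclusions separately, taking as the main tool the classical Cvetković--Gutman identity $\eta(T)=|V(T)|-2\nu(T)$, valid for any forest. First I would record a structural consequence of this identity: since $\eta(T-v)-\eta(T)=1-2\bigl(\nu(T)-\nu(T-v)\bigr)$ is necessarily odd, and Cauchy interlacing forces $|\eta(T-v)-\eta(T)|\le 1$, one must have $\eta(T-v)\in\{\eta(T)-1,\eta(T)+1\}$. In particular, $\eta(T-v)=\eta(T)-1$ if and only if $\nu(T-v)=\nu(T)$, i.e., if and only if $v\in EG(T)$. This reduces the lemma to the purely algebraic statement $Supp(T)=\{v\in V(T):\eta(T-v)=\eta(T)-1\}$.

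For $EG(T)\subseteq Supp(T)$, I would take $v\in EG(T)$ and a maximum matching $M$ that avoids $v$; then $M\subseteq E(T-v)$, so $\nu(T-v)=\nu(T)$, and the reduction step gives $\eta(T-v)=\eta(T)-1$. The linear map that drops the $v$-coordinate sends $\{x\in\mathcal{N}(T):x_v=0\}$ injectively into $\mathcal{N}(T-v)$; if $v$ were not in $Supp(T)$ then all of $\mathcal{N}(T)$ would embed into $\mathcal{N}(T-v)$, forcing $\eta(T-v)\ge\eta(T)$, a contradiction.

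For the reverse inclusion $Supp(T)\subseteq EG(T)$, I would introduce the linear functional $\phi:\mathcal{N}(T-v)\to\mathbb{R}$ given by $\phi(z)=\sum_{u\sim v}z_u$, where the sum is over the neighbors of $v$ in $T$. The key observation is that $z\in\ker\phi$ exactly when the extension of $z$ to $V(T)$ by $z_v:=0$ is a null vector of $T$; hence $\dim\ker\phi=\dim\{x\in\mathcal{N}(T):x_v=0\}=\eta(T)-1$, the last equality because $v\in Supp(T)$ makes the evaluation $x\mapsto x_v$ a surjection from $\mathcal{N}(T)$ onto $\mathbb{R}$. If $\phi\not\equiv 0$ then $\eta(T-v)=\dim\ker\phi+1=\eta(T)$, ruled out by the parity step above. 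Therefore $\phi\equiv 0$, giving $\eta(T-v)=\eta(T)-1$, hence $v\in EG(T)$.

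The main obstacle is the parity argument: everything pivots on ruling out the middle value $\eta(T-v)=\eta(T)$, and that exclusion uses $\eta(T)=|V(T)|-2\nu(T)$ in an essential way. A minor technical point is that $T-v$ is a forest rather than a tree, so the nullity formula must be invoked in its forest version (applied component by component). Once the forbidden middle value is set aside, both directions collapse into a short kernel-dimension count.
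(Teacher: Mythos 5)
Your proof is correct, and it is worth noting up front that the paper itself offers nothing to compare it against: Lemma \ref{t11} is imported from \cite{Maikon} without proof, so any argument here is necessarily ``a different route'' from the (absent) in-paper one. Your route is the natural spectral one: the forest identity $\eta(F)=|V(F)|-2\nu(F)$ makes $\eta(T-v)-\eta(T)$ odd, which excludes the middle value $\eta(T-v)=\eta(T)$; the functional $\phi(z)=\sum_{u\sim v}z_u$ on $\mathcal{N}(T-v)$ has $\ker\phi$ isomorphic (via extension by zero) to $\lbrace x\in\mathcal{N}(T):x_v=0\rbrace$, and comparing dimensions on both sides of that isomorphism yields both inclusions. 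Every step checks out. Two small remarks. First, there is a sign slip: the difference should read $\eta(T-v)-\eta(T)=2\bigl(\nu(T)-\nu(T-v)\bigr)-1$, not $1-2\bigl(\nu(T)-\nu(T-v)\bigr)$; fortunately the equivalence you then assert ($\eta(T-v)=\eta(T)-1$ iff $\nu(T-v)=\nu(T)$, iff $v\in EG(T)$) is the one that follows from the correct sign, so nothing downstream is harmed. Second, Cauchy interlacing is dispensable: parity alone rules out $\eta(T-v)=\eta(T)$, which is all either direction uses. Compared with the combinatorial treatment one expects in \cite{Maikon} (built on maximum matchings of pendant subtrees and the Gallai--Edmonds structure of trees), your argument is more self-contained on the linear-algebra side, at the cost of invoking the classical nullity--matching identity for forests as a black box.
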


Let $G$ be a unicyclic graph and let $C$ be the unique cycle of $G$. For each vertex $v\in{V(C)}$, we denote by $G\lbrace{v}\rbrace$ the induced connected subgraph of $G$ with maximum number of vertices, which contains the vertex $v$ and no other vertex of $C$. $G\lbrace{v}\rbrace$ is called the \emph{pendant tree of $G$ at $v$}.
The unicyclic graph $G$ is said to be of Type $I$ if there exists a vertex $v$ on the cycle of $G$ such that does not exist a maximum matching of $G\lbrace{v}\rbrace$ that does not saturate $v$, otherwise, $G$ is said to be of Type $II$ (for more details see \cite{Maikon}).

The next two lemmas \cite{Maikon} show that in order to verify whether a unicyclic graph is Type $I$ or $II$, it is sufficient to check whether a vertex $v$ of the cycle is or is not in the support of its pendant tree $G\lbrace{v}\rbrace$.

\begin{lemat}\cite{Maikon}
\label{t4}
A unicyclic graph $G$ is of Type $I$ if and only if there exists at least one pendant tree $G\lbrace{v}\rbrace$ such that $v\notin{Supp(G\lbrace{v}\rbrace)}$.
\end{lemat}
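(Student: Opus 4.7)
The plan is to translate the definition of Type $I$ directly into the language of supports using Lemma \ref{t11}. Unwinding the definition, $G$ is of Type $I$ precisely when there exists a vertex $v \in V(C)$ such that every maximum matching of the pendant tree $G\{v\}$ saturates $v$. In Edmonds--Gallai terms this is exactly the statement $v \notin EG(G\{v\})$, since by definition $EG(G\{v\})$ consists of the vertices left unmatched by at least one maximum matching of $G\{v\}$.

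Next I would verify that Lemma \ref{t11} applies to $G\{v\}$. By construction $G\{v\}$ is the connected induced subgraph of the unicyclic graph $G$ that contains $v$ and no other vertex of the unique cycle $C$; since $G$ has only one cycle and $G\{v\}$ misses a vertex of it, $G\{v\}$ is acyclic, hence a tree. Lemma \ref{t11} therefore yields $EG(G\{v\}) = Supp(G\{v\})$, and so $v \notin EG(G\{v\}) \iff v \notin Supp(G\{v\})$.

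Chaining the two equivalences gives the biconditional of the lemma: $G$ is of Type $I$ iff there exists $v\in V(C)$ with $v\notin Supp(G\{v\})$. There is no substantive obstacle; the whole argument reduces to the identification $EG(G\{v\})=Supp(G\{v\})$, which is exactly the content of Lemma \ref{t11}, together with the trivial observation that pendant trees of a unicyclic graph are genuine trees.
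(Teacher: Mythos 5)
Your argument is correct: the paper states Lemma \ref{t4} as a cited result from \cite{Maikon} and gives no proof of its own, and your derivation is the natural one --- the Type $I$ condition for a vertex $v\in V(C)$ is, by definition, exactly $v\notin EG(G\lbrace{v}\rbrace)$, each pendant tree $G\lbrace{v}\rbrace$ is indeed a tree (it is connected and omits a cycle vertex of the unique cycle), and Lemma \ref{t11} converts $EG(G\lbrace{v}\rbrace)$ into $Supp(G\lbrace{v}\rbrace)$. Nothing is missing; the only content beyond bookkeeping is the identification $EG(T)=Supp(T)$, which you correctly isolate as the key input.
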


\begin{lemat}\cite{Maikon}
\label{c3}
A unicyclic  graph $G$ is of Type $II$ if and only if every pendant tree $G\lbrace{v}\rbrace$ is such that $v\in{Supp(G\lbrace{v}\rbrace)}$.
\end{lemat}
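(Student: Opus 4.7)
The plan is to derive Lemma~\ref{c3} as the formal contrapositive of Lemma~\ref{t4}, exploiting the fact that the Type~I / Type~II classification of unicyclic graphs is a dichotomy by definition: the paragraph introducing these notions explicitly declares a unicyclic graph to be of Type~II precisely when it fails the existential condition that defines Type~I. So there is no new graph-theoretic content to establish; only a logical negation to perform.

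First, I would restate Lemma~\ref{t4} as the biconditional ``$G$ is of Type~I if and only if there exists a cycle vertex $v$ with $v\notin Supp(G\lbrace v\rbrace)$.'' Negating both sides yields ``$G$ is not of Type~I if and only if for every cycle vertex $v$ one has $v\in Supp(G\lbrace v\rbrace)$.'' Since by definition ``not Type~I'' is synonymous with ``Type~II,'' the left-hand side is exactly the statement that $G$ is of Type~II, which is precisely the conclusion of Lemma~\ref{c3}.

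The proof therefore reduces to two trivial checks: that Lemma~\ref{t4} is genuinely a biconditional (which it is, as stated), and that every unicyclic graph falls into exactly one of the two types (which follows from the ``otherwise'' in the definition, since the Type~I condition is a clear existential statement about cycle vertices). The substantive content, namely the link between a cycle vertex lying in the support of its pendant tree and the existence of a maximum matching of that pendant tree that does not saturate the vertex, has already been absorbed into the proof of Lemma~\ref{t4} via Lemma~\ref{t11} (the identity $EG(T)=Supp(T)$ for trees). The only conceivable obstacle is notational bookkeeping, namely keeping straight which quantifier is being negated, and this is dispatched by the one-line contrapositive argument above.
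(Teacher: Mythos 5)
Your argument is correct: since the definition makes Type~$I$ and Type~$II$ an exhaustive and mutually exclusive dichotomy, negating both sides of the biconditional in Lemma~\ref{t4} yields exactly Lemma~\ref{c3}, and the substantive matching-theoretic content is indeed already carried by Lemma~\ref{t4} via Lemma~\ref{t11}. The paper itself gives no proof of this statement (it is cited from \cite{Maikon}), but your contrapositive derivation is the natural and complete argument.
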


\begin{lemat}\cite{tan}
\label{nullicycle}
If $n\equiv{0}\mbox{ }(mod\mbox{ }4)$, then $\eta(C_n)=2$. Otherwise, $\eta(C_n)=0$.
\end{lemat}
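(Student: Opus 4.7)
The plan is to work directly with the null space equation. A vector $x=(x_1,\dots,x_n)^\top$ lies in $\mathcal{N}(C_n)$ if and only if $A(C_n)x=0$, which, after labelling the vertices of $C_n$ cyclically, amounts to the system of recurrences
\[
x_{i-1}+x_{i+1}=0 \quad \text{for all } i\in\{1,\dots,n\},
\]
where all indices are taken modulo $n$. So first I would rewrite this as $x_{i+1}=-x_{i-1}$, which tells me that the odd-indexed entries and the even-indexed entries each form an alternating-sign geometric-type progression. Concretely, $x_{2j+1}=(-1)^j x_1$ and $x_{2j+2}=(-1)^j x_2$, so a would-be null vector is entirely determined by the two parameters $x_1$ and $x_2$.

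The second step is to impose the cyclic closure conditions, which are the only constraints not already encoded in the two separate progressions. I would split into cases according to the parity of $n$. If $n=2m+1$ is odd, the two parities of indices get ``mixed'' when we close the cycle: evaluating $x_{n+1}=x_1$ and $x_{n+2}=x_2$ via the formulas above yields two equations forcing $x_1=-x_1$ and $x_2=-x_2$, so $x_1=x_2=0$ and $\eta(C_n)=0$. If $n=2m$ is even, the closure condition coming from $i=1$ reduces to $(1+(-1)^{m-1})x_2=0$, and analogously to $(1+(-1)^{m-1})x_1=0$; these are trivially satisfied precisely when $m$ is even, i.e.\ when $4\mid n$, in which case $x_1$ and $x_2$ are free and $\eta(C_n)=2$, while if $m$ is odd (i.e.\ $n\equiv 2\pmod 4$) we again get $x_1=x_2=0$ and $\eta(C_n)=0$.

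Combining the three cases gives exactly the dichotomy stated in the lemma. I do not anticipate any genuine obstacle here: the only care point is the bookkeeping of the cyclic closure, making sure one applies the recurrence both forward (from $x_{n-1}$ to $x_{n+1}=x_1$) and ``backward'' to obtain the constraint on $x_2$ and not just on $x_1$. An alternative, equally short route would be to invoke the classical fact that the eigenvalues of $C_n$ are $2\cos(2\pi k/n)$ for $k=0,\dots,n-1$, and count the $k$ for which this vanishes; the case analysis $4k\equiv n\pmod{2n}$ produces the same answer, but the recurrence argument above is self-contained and fits the linear-algebra flavour of the rest of the paper.
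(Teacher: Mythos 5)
Your proposal is correct. Note that the paper itself gives no proof of this lemma: it is quoted from the reference [Xuezhong--Liu], where the standard derivation goes through the explicit spectrum of the cycle, namely the eigenvalues $2\cos(2\pi k/n)$ for $k=0,\dots,n-1$, and counts the values of $k$ with $\cos(2\pi k/n)=0$ (this is the ``alternative route'' you mention at the end). Your recurrence argument is a self-contained elementary substitute and it does fit the linear-algebra style of the rest of the paper: the relation $x_{i+1}=-x_{i-1}$ indeed forces $x_{2j+1}=(-1)^jx_1$ and $x_{2j+2}=(-1)^jx_2$, and the closure analysis correctly yields nullity $2$ exactly when $4\mid n$. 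One small bookkeeping remark on the odd case: with $n=2m+1$ the two wrap-around identities are the cross-relations $x_1=(-1)^m x_2$ and $x_2=(-1)^{m+1}x_1$, not directly $x_1=-x_1$ and $x_2=-x_2$; you only get $x_1=-x_1$ after substituting one into the other (equivalently, since $\gcd(2,n)=1$ the step-by-two recurrence traverses all $n$ vertices and returns to $x_1$ with sign $(-1)^n=-1$). This does not affect the conclusion, but the phrasing as two independent equations each of the form $x=-x$ is not literally what the closure conditions give.
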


The following result computes the nullity of a unicyclic graph from the nullity of its pendant trees.

\begin{lemat}
\cite{nulidade}
\label{t6}
Let $G$ be a unicyclic graph and let $C$ be its cycle. If $G$ is of Type $I$ and $v\in{V(C)}$ be matched in $G\lbrace{v}\rbrace$, then $$\eta(G)=\eta(G\lbrace{v}\rbrace)+\eta(G-G\lbrace{v}\rbrace).$$
If $G$ is of Type $II$, then $$\eta(G)=\eta(G-C)+\eta(C).$$
\end{lemat}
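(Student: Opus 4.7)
My plan is to treat the two cases separately; each reduces to an extension-and-restriction analysis relating $\mathcal{N}(G)$ to null spaces of simpler subgraphs, and each hinges on the algebraic dictionary $v \in Supp(H) \iff e_v \notin \mathrm{col}(A(H))$ (valid because $A(H)$ is symmetric, so $\mathrm{col}(A(H))^{\perp} = \mathcal{N}(A(H))$).

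\textbf{Type I.} Choose $v \in V(C)$ with $v \notin Supp(T)$, where $T := G\{v\}$ (guaranteed by Lemma \ref{t4}), and let $T' := G - T$. Both are trees, and $v$ is matched in $T$; together with the tree identity $\eta(H) = |V(H)| - 2\nu(H)$ and $\nu(T-v) = \nu(T)-1$, this yields $\eta(T-v) = \eta(T)+1$ and $\eta(G-v) = \eta(T)+1+\eta(T')$. The key step is to show the functional
\[
L_1(y_1) \;:=\; \sum_{w \sim v,\; w \in V(T)} y_1(w)
\]
is not identically zero on $\mathcal{N}(T-v)$: otherwise every $y_1$ would extend by $0$ at $v$ to an element of $\mathcal{N}(T) \cap \{x_v=0\} = \mathcal{N}(T)$ (the equality by $v \notin Supp(T)$), giving $\eta(T-v) \leq \eta(T)$, a contradiction. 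Picking $y_1$ with $L_1(y_1) = 1$, the vector $y_0$ on $V(T)$ that agrees with $y_1$ off $v$ and vanishes at $v$ satisfies $A(T)y_0 = e_v$; since all solutions of $A(T)y = e_v$ differ by an element of $\mathcal{N}(T) \subseteq \{x_v = 0\}$, every solution has $v$-coordinate $0$. Hence if some $x \in \mathcal{N}(G)$ had $x_v \neq 0$, then $x|_{V(T)}$ would solve $A(T)\,x|_{V(T)} = -(x_{v_1}+x_{v_2})e_v$ and force $x_v = 0$, contradiction; thus $v \notin Supp(G)$. A dimension count on the injection $\{x \in \mathcal{N}(G): x_v=0\} \hookrightarrow \mathcal{N}(G-v)$, whose image is the hyperplane cut out by $L_1(y_1) + y_2(v_1) + y_2(v_2) = 0$, then yields $\eta(G) = \eta(G-v) - 1 = \eta(T) + \eta(T')$.

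\textbf{Type II.} By Lemma \ref{c3}, $v \in Supp(G\{v\})$ for every $v \in V(C)$, so fix $z^v \in \mathcal{N}(G\{v\})$ with $z^v_v = 1$. For the bound $\eta(G) \geq \eta(G-C)+\eta(C)$ I would assemble two linearly independent families: (a) each $\tilde y \in \mathcal{N}(G-C) = \bigoplus_{v \in V(C)} \mathcal{N}(G\{v\}-v)$ extended by $0$ on $V(C)$ belongs to $\mathcal{N}(G)$, because the bijection between $\mathcal{N}(G\{v\}-v)$ and $\{x \in \mathcal{N}(G\{v\}): x_v=0\}$ automatically forces $\sum_{w \sim v} \tilde y_w = 0$; and (b) for each $a \in \mathcal{N}(C)$, the vector equal to $a_i z^{v_i}$ on $V(G\{v_i\})$ lies in $\mathcal{N}(G)$ by direct vertex-wise checking. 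These families intersect trivially, since (a) vanishes on $V(C)$ while (b) restricts to the nonzero cycle null vector $a$. For the reverse inequality, take $x \in \mathcal{N}(G)$ and set $a := x|_{V(C)}$, $s_i := -(a_{i-1}+a_{i+1})$. Restricting the null equations of $G$ to $V(G\{v_i\})$ gives $A(G\{v_i\})\,x|_{V(G\{v_i\})} = s_i e_{v_i}$; since $e_{v_i} \notin \mathrm{col}(A(G\{v_i\}))$, this forces $s_i = 0$, so $a \in \mathcal{N}(C)$. Each block $x|_{V(G\{v_i\})}$ is then a null vector of $G\{v_i\}$ with value $a_i$ at $v_i$, so splits uniquely as $a_i z^{v_i}$ plus a vector vanishing at $v_i$; reassembling, $x$ is a sum of a family~(b) vector and a family~(a) vector, giving $\eta(G) \leq \eta(G-C)+\eta(C)$.

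\textbf{Main obstacle.} I expect the delicate step to be Type I's derivation of $v \notin Supp(G)$: it is not a combinatorial identity but a consequence of the finer linear-algebraic fact that the common $v$-coordinate of every solution of $A(T)y = e_v$ equals $0$, which itself rests on $L_1 \not\equiv 0$. All remaining ingredients (family construction, vertex-wise verification of null equations, and the column-space obstruction in Type II) become routine once this algebraic hinge is in place.
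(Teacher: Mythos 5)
The paper does not prove this lemma; it is imported from \cite{nulidade}, so there is no in-paper argument to compare yours against. Judged on its own terms, your proof is essentially correct and self-contained. The Type I half is complete: the forest identity $\eta(F)=|V(F)|-2\nu(F)$ together with $\nu(T-v)=\nu(T)-1$ gives $\eta(G-v)=\eta(T)+1+\eta(T')$; the non-vanishing of $L_1$ on $\mathcal{N}(T-v)$ is forced by that count; the observation that every solution of $A(T)y=e_v$ has $v$-coordinate $0$ correctly yields $v\notin Supp(G)$; and the identification of the image of $\{x\in\mathcal{N}(G):x_v=0\}$ with the hyperplane $L_1(y_1)+y_2(v_1)+y_2(v_2)=0$ closes the case. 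The Type II half is also structurally sound: the criterion $v\in Supp(H)\iff e_v\notin\mathrm{col}(A(H))$ is valid for symmetric matrices and correctly forces $x|_{V(C)}\in\mathcal{N}(C)$, and splitting each block as $a_iz^{v_i}$ plus a null vector vanishing at $v_i$ gives both inequalities.

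The one assertion you leave unjustified is the surjectivity of the restriction map from $\{x\in\mathcal{N}(G\{v\}):x_v=0\}$ onto $\mathcal{N}(G\{v\}-v)$. That surjectivity is exactly equivalent to the claim you want it to deliver, namely $\sum_{w\sim v}\tilde y_w=0$ for every $\tilde y\in\mathcal{N}(G\{v\}-v)$, so invoking ``the bijection'' is circular as written. The fix is the same matching dichotomy you already deploy in Type I: for a tree $T$ with $v\in Supp(T)=EG(T)$ one has $\nu(T-v)=\nu(T)$, hence $\eta(T-v)=\eta(T)-1$, which equals the dimension of the domain of the (clearly injective) restriction map, so that map is onto. (The paper's Proposition \ref{tipe2subspace} reaches the same point by a different route, via Lemma \ref{non supp}, which gives the stronger fact that each individual neighbor $u$ of $v$ inside $G\{v\}$ satisfies $\tilde y_u=0$.) With that one line added, your argument is a complete and correct proof of the cited lemma.
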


\section{Null space of unicyclic graphs of Type I}
\label{sec:nulltipoI}

In this section, we obtain a basis for the null space of a unicyclic graph $G$ of Type $I$ using a basis for  $\mathcal{N}(G\{v\})$ and $\mathcal{N}(G-G\{v\})$.

\begin{defn}
Let $H$ be a subgraph of a graph $G$. Each vector $x\upharpoonright^{G}_{H}$ of the extended null space of $H$, denoted by $\mathcal{N}(H)\upharpoonright^{G}_{H}$, is constructed by extending a vector $x\in{\mathcal{N}(H)}$ as follows:
\begin{itemize}
\item[\textbf{1-}] For any $v\in{V(G)-V(H)}$, ${\left(x\upharpoonright^{G}_{H}\right)}_v=0$;
\item[\textbf{2-}] For any $v\in{V(H)}$, ${\left(x\upharpoonright^{G}_{H}\right)}_v=x_v$; .
\end{itemize}

\end{defn}

In the next results, we obtain a basis for the null space of unicyclic graphs of Type $I$.

\begin{prop}\label{t21}
 If $G$ is a unicyclic graph of Type $I$ and $G\lbrace{v_i}\rbrace$ is a pendant tree such that $v_i\notin{Supp(G\lbrace{v_i}\rbrace)}$, then $$\mathcal{N}(G\lbrace{v_i}\rbrace)\upharpoonright^{G}_{G\lbrace{v_i}\rbrace}\subseteq{\mathcal{N}(G)}.$$
\end{prop}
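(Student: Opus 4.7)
The plan is to verify directly, coordinate by coordinate, that the vector $y := x\upharpoonright^{G}_{G\{v_i\}}$ belongs to $\mathcal{N}(G)$ for every $x\in\mathcal{N}(G\{v_i\})$. Since the extension map $x\mapsto x\upharpoonright^{G}_{G\{v_i\}}$ is linear, the set inclusion will follow immediately from this pointwise claim.

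The first ingredient is a small structural observation about $G\{v_i\}$: because $G\{v_i\}$ is by definition the maximal connected induced subgraph of $G$ that contains $v_i$ and no other vertex of the cycle $C$, the only vertex of $V(G\{v_i\})$ having any neighbor outside $V(G\{v_i\})$ is $v_i$ itself. Indeed, if $w\in V(G\{v_i\})$ were adjacent to some $u\notin V(G\{v_i\})$, then $u$ could not be a non-cycle vertex (otherwise $V(G\{v_i\})\cup\{u\}$ would still induce a connected subgraph meeting $C$ only at $v_i$, contradicting maximality), so $u\in V(C)\setminus\{v_i\}$, which forces $w\in V(C)$ and hence $w=v_i$.

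With this in hand, I would split the verification of $A(G)y=0$ into two cases. For $u\in V(G\{v_i\})$, the only neighbors of $u$ in $G$ that could contribute nonzero entries of $y$ lie in $V(G\{v_i\})$ (neighbors outside this set occur only for $u=v_i$, and on such vertices $y$ vanishes by the definition of the extension), so $(A(G)y)_u=(A(G\{v_i\})x)_u=0$ since $x\in\mathcal{N}(G\{v_i\})$. For $u\notin V(G\{v_i\})$, either $u$ has no neighbor in $V(G\{v_i\})$, in which case $(A(G)y)_u=0$ trivially, or its unique such neighbor is $v_i$ by the structural observation, giving $(A(G)y)_u=y_{v_i}=x_{v_i}$.

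The single place where the hypothesis $v_i\notin Supp(G\{v_i\})$ is used is precisely to kill this last term: by Lemma \ref{suppcompute} (or directly from the definition of the support of a subspace), the condition $v_i\notin Supp_{G\{v_i\}}(\mathcal{N}(G\{v_i\}))$ means $x_{v_i}=0$ for every $x\in\mathcal{N}(G\{v_i\})$. This is essentially the only nontrivial point in the argument, and it is exactly the obstacle that forces the Type $I$ setting, for which Lemma \ref{t4} guarantees that at least one such pendant tree $G\{v_i\}$ exists. Combining the two cases yields $A(G)y=0$, so $y\in\mathcal{N}(G)$, completing the inclusion.
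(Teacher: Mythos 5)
Your proposal is correct and follows essentially the same route as the paper: both arguments reduce to the observation that $v_i$ is the only vertex of $G\{v_i\}$ with neighbors outside $V(G\{v_i\})$, so the only potentially nonzero coordinates of $A(G)\bigl(x\upharpoonright^{G}_{G\{v_i\}}\bigr)$ beyond those of $A(G\{v_i\})x$ are multiples of $x_{v_i}$, which vanishes because $v_i\notin Supp(G\{v_i\})$. The paper phrases this as a block decomposition of $A(G)$ while you verify it coordinate by coordinate, but the computation is the same.
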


\begin{proof}
Consider the unicyclic graph $G$ of Type $I$ with cycle $C=\{v_{1}v_{2}\cdots v_{k}v_{1}\}$ and its corresponding pendant trees. If we order the columns of the adjacency matrix of $G$ from $V(G\lbrace{v_1}\rbrace)$ to $V(G\lbrace{v_k}\rbrace)$, then $A(G)$ can have the following format

\begin{scriptsize}
\begin{eqnarray*}
A(G)&=&\begin{blockarray}{ccccccccc}
         & V(G\lbrace{v_1}\rbrace)    &\ldots  & V(G\lbrace{v_{i-1}}\rbrace) & V(G\lbrace{v_i}\rbrace)    & V(G\lbrace{v_{i+1}}\rbrace)  &  \ldots &  V(G\lbrace{v_k}\rbrace) \\
\begin{block}{r[cccccccc]}
V(G\lbrace{v_1}\rbrace)      & A(G\lbrace{v_1}\rbrace)       & \cdots   &   \mathbf{0}                          &  \mathbf{0}         &  \mathbf{0}             &   \cdots      &  C         \\
\vdots                    &   \vdots     &   \ddots     &      \vdots                 &  \vdots  & \vdots       &  \cdots       &  \vdots   \\
V(G\lbrace{v_{i-1}}\rbrace)  &  \mathbf{0}^t      & \cdots & A(G\lbrace{v_{i-1}}\rbrace) &   M      &   \mathbf{0} &  \cdots &    \mathbf{0}        \\
V(G\lbrace{v_i}\rbrace)      &  \mathbf{0}^t      & \cdots &  M^t     &  A(G\lbrace{v_{i}}\rbrace)    & B        &    \cdots &      \mathbf{0}        \\
V(G\lbrace{v_{i+1}}\rbrace)  & \mathbf{0}^t       & \cdots &  \mathbf{0}^t     &  B^t        &  A(G\lbrace{v_{i+1}}\rbrace)  &  \cdots       &  \mathbf{0}  \\
\vdots   &    \vdots           & \cdots & \vdots &  \vdots        &  \vdots       & \ddots  &        \vdots            \\
V(G\lbrace{v_k}\rbrace) &  C^t     & \cdots &  \mathbf{0}^t     &  \mathbf{0}^t        &  \mathbf{0}^t       &  \cdots       &  A(G\lbrace{v_k}\rbrace)                  \\
\end{block}
\end{blockarray}.\\
\end{eqnarray*}
\end{scriptsize}

Where $M$, $B$ and $C$ are submatrices with almost all null entries, except for the entries corresponding to the adjacencies between  $v_{i-1}$ and $v_i$, $v_{i}$ and $v_{i+1}$, $v_{1}$ and $v_k$, respectively.

Consider $x\in{\mathcal{N}(G\lbrace{v_i}\rbrace)}$ and observe that as $v_i\notin{Supp(G\lbrace{v_i}\rbrace)}$ we have that  $x_{v_i}={0}$.
Thus
\begin{eqnarray*}
A(G)\cdot\left(x\upharpoonright^{G}_{G\lbrace{v_i}\rbrace}\right)=\left[\begin{array}{cccccccc}
   \mathbf{0} \\
    Mx  \\
    A(G\lbrace{v_{i}}\rbrace)x\\
      B^t.x\\
      \mathbf{0}\\
\end{array}\right]=\left[\begin{array}{cccccccc}
             \mathbf{0}\\
    x_{v_i}.1  \\
    A(G\lbrace{v_{i}}\rbrace)x\\
      x_{v_i}.1\\
      \mathbf{0}\\
\end{array}\right]=\mathbf{0}.
\end{eqnarray*}
Therefore, the extended vector $x\upharpoonright^{G}_{G\lbrace{v_i}\rbrace}\in \mathcal{N}(G)$ and it implies that $\mathcal{N}(G\lbrace{v_i}\rbrace)\upharpoonright^{G}_{G\lbrace{v_i}\rbrace}\subseteq{\mathcal{N}(G)}$.
\end{proof}

\begin{prop}
\label{teo1}
Let $G$ be a unicyclic graph and $G\lbrace{v}\rbrace$ a pendant tree.
\hspace{0.2cm}Let $u,w\in{N(v)\cap{V(G-G\lbrace{v}\rbrace)}}$.
If $x\in\mathcal{N}(G-G\lbrace{v}\rbrace)$ such that $x_u+x_w=0$, then $$x\upharpoonright^{G}_{G-G\lbrace{v}\rbrace}\in{\mathcal{N}(G)}.$$
\end{prop}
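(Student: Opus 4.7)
The plan is to argue directly by a block-matrix computation analogous to the one in the proof of Proposition \ref{t21}, but with the roles of $G\{v\}$ and $G - G\{v\}$ reversed. The vector we are testing is $y := x\upharpoonright^{G}_{G-G\{v\}}$, which is zero on every vertex of $V(G\{v\})$ (in particular on $v$ itself) and equals $x$ on $V(G - G\{v\})$. To show $y \in \mathcal{N}(G)$, I will verify that $(A(G)\,y)_t = 0$ for every $t \in V(G)$.

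First I would set up the block structure. Since $v$ is a vertex of the (unique) cycle $C$ of $G$ and $G\{v\}$ is the pendant tree at $v$, the only edges connecting $V(G\{v\})$ with $V(G - G\{v\})$ are the two cycle edges from $v$ to $u$ and from $v$ to $w$. Ordering the vertices with $V(G\{v\})$ first we can write
\begin{equation*}
A(G) = \begin{pmatrix} A(G\{v\}) & E \\ E^{T} & A(G - G\{v\}) \end{pmatrix},
\end{equation*}
where $E$ has zeros everywhere except at the entries $E_{v,u} = E_{v,w} = 1$. Then, with $y$ written blockwise as $(\mathbf{0},\, x)^{T}$, we get
\begin{equation*}
A(G)\,y = \begin{pmatrix} E\,x \\ A(G - G\{v\})\,x \end{pmatrix}.
\end{equation*}

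Next I would analyze each block separately. The lower block is zero because, by hypothesis, $x \in \mathcal{N}(G - G\{v\})$. For the upper block, the matrix $E$ has only one nonzero row, indexed by $v$, so $Ex$ is zero on every vertex of $V(G\{v\}) \setminus \{v\}$, while its $v$-entry equals $x_{u} + x_{w}$. The assumption $x_{u} + x_{w} = 0$ kills exactly this remaining entry, so $Ex = \mathbf{0}$.

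Combining the two blocks gives $A(G)\,y = \mathbf{0}$, which is precisely the statement that $x\upharpoonright^{G}_{G-G\{v\}} \in \mathcal{N}(G)$. There is no real obstacle here: the key observation is that $v$ is the unique vertex that ``sees'' the deleted side, and its two external neighbors are exactly $u$ and $w$; the hypothesis $x_u + x_w = 0$ is the minimal condition needed to cancel the single interface contribution at $v$. All other vertices of $V(G\{v\})$ only have neighbors inside $V(G\{v\})$, where $y$ vanishes, so those rows are automatically zero.
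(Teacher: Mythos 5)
Your proof is correct and is essentially the same as the paper's: both write $A(G)$ in block form with respect to the partition $V(G-G\{v\})$ versus $V(G\{v\})$, observe that the only interface entries correspond to the edges $vu$ and $vw$, and use $x\in\mathcal{N}(G-G\{v\})$ for one block and $x_u+x_w=0$ to kill the single remaining entry at $v$. The only difference is the order of the blocks, which is immaterial.
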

\begin{proof}
We observe that the condition $u,w\in{N(v)\cap{V(G-G\lbrace{v}\rbrace)}}$ means that the  vertices $u$ and $w$ are in the cycle $C$ of $G$ and are adjacent to $v$. Thus, we can sort the adjacency matrix of $G$ as follows
\begin{eqnarray*}
A(G)&=&\begin{blockarray}{cccc}
         & V(G-G\lbrace{v}\rbrace)    &V(G\lbrace{v}\rbrace)  \\
\begin{block}{r[ccc]}
 V(G-G\lbrace{v}\rbrace) & A(G-G\lbrace{v}\rbrace)  &            M    \\
V(G\lbrace{v}\rbrace)   &   M^{t}                  &   A(G\lbrace{v}\rbrace)    \\
\end{blockarray}.\\
\end{eqnarray*}
\noindent Where $M$ is a submatrix with almost all null entries, except for the entries corresponding to the adjacencies between $v$ and $u$ and  $v$ and $w$.
Given $x\in\mathcal{N}(G-G\lbrace{v}\rbrace)$ we construct the extended vector $x\upharpoonright^{G}_{G-G\lbrace{v}\rbrace}$. Using the hypothesis that $x_u+x_w=0$ we obtain that
\begin{eqnarray*}
A(G)x\upharpoonright^{G}_{G-G\lbrace{v}\rbrace}=\left[\begin{array}{cccccccc}
   A(G-G\lbrace{v}\rbrace)x+M.\mathbf{0}  \\
    M^{t}x+A(G\lbrace{v}\rbrace).\mathbf{0}\\
\end{array}\right]=\left[\begin{array}{cccccccc}
     \mathbf{0}\\
    x_{u}+x_{w}\\
\end{array}\right]=\mathbf{0}.
\end{eqnarray*}
Hence, $x\upharpoonright^{G}_{G-G\lbrace{v}\rbrace}\in{\mathcal{N}(G)}$.
\end{proof}

Now, we are ready to prove the following theorem.

\begin{teore}\label{teo11}
Let $G$ be a unicyclic graph of Type $I$, $G\lbrace{v}\rbrace$ its pendant tree such that $v\notin{Supp(G\lbrace{v}\rbrace)}$ and $u,w\in{N(v)\cap{V(G-G\lbrace{v}\rbrace)}}$. If for all ${x}\in{\mathcal{N}(G-G\{v\})}$ we have $x_u+x_w=0$, then  $$\mathcal{N}(G\lbrace{v}\rbrace)\upharpoonright^{G}_{G\lbrace{v}\rbrace}\bigoplus\mathcal{N}(G-G\lbrace{v}\rbrace)\upharpoonright^{G}_{G-G\lbrace{v}\rbrace}={\mathcal{N}(G)}.$$
\end{teore}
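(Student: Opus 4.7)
The plan is to verify the two inclusions of the direct sum into $\mathcal{N}(G)$ separately, argue that the sum is direct from the disjointness of vertex supports, and finally close the argument by a dimension count using Lemma \ref{t6}.

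First, I would note that Proposition \ref{t21} immediately gives $\mathcal{N}(G\{v\})\upharpoonright^{G}_{G\{v\}}\subseteq\mathcal{N}(G)$, since the hypothesis $v\notin Supp(G\{v\})$ is exactly what that proposition requires. For the second inclusion, the standing hypothesis that $x_u+x_w=0$ for every $x\in\mathcal{N}(G-G\{v\})$ is precisely the condition of Proposition \ref{teo1}, applied to each vector of $\mathcal{N}(G-G\{v\})$; therefore $\mathcal{N}(G-G\{v\})\upharpoonright^{G}_{G-G\{v\}}\subseteq\mathcal{N}(G)$.

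Next I would show that the sum is direct. By the definition of the extension operation, vectors in $\mathcal{N}(G\{v\})\upharpoonright^{G}_{G\{v\}}$ vanish on $V(G-G\{v\})$, while vectors in $\mathcal{N}(G-G\{v\})\upharpoonright^{G}_{G-G\{v\}}$ vanish on $V(G\{v\})$. Since $V(G\{v\})$ and $V(G-G\{v\})$ partition $V(G)$, any vector lying in both extended subspaces must be identically zero, which gives the direct sum. Moreover, the extension maps are clearly linear and injective (they just pad with zeros), so they preserve dimensions: $\dim\bigl(\mathcal{N}(G\{v\})\upharpoonright^{G}_{G\{v\}}\bigr)=\eta(G\{v\})$ and analogously for $G-G\{v\}$.

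To promote the inclusion $\mathcal{N}(G\{v\})\upharpoonright^{G}_{G\{v\}}\oplus\mathcal{N}(G-G\{v\})\upharpoonright^{G}_{G-G\{v\}}\subseteq\mathcal{N}(G)$ to equality, I would compare dimensions via Lemma \ref{t6}. The hypothesis $v\notin Supp(G\{v\})$ together with Lemma \ref{t11} (applied to the tree $G\{v\}$) implies $v\notin EG(G\{v\})$, i.e., $v$ is matched in $G\{v\}$. Since $G$ is of Type $I$, Lemma \ref{t6} then gives $\eta(G)=\eta(G\{v\})+\eta(G-G\{v\})$, which matches the dimension of the direct sum on the left, and the equality follows.

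The step most likely to require care is justifying that we may apply Lemma \ref{t6} in its Type~$I$ form, since that lemma demands $v$ be matched in $G\{v\}$; bridging the gap from the hypothesis $v\notin Supp(G\{v\})$ to this matching condition is where Lemma \ref{t11} is indispensable. Everything else is bookkeeping: the inclusions come packaged in earlier propositions, and the direct-sum/dimension argument is automatic once the supports are seen to be disjoint.
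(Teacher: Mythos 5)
Your proposal is correct and follows essentially the same route as the paper: Propositions \ref{t21} and \ref{teo1} give the two inclusions, disjointness of the vertex supports gives the directness of the sum, and Lemma \ref{t6} closes the dimension count. Your explicit bridge from $v\notin Supp(G\{v\})$ to ``$v$ is matched in $G\{v\}$'' via Lemma \ref{t11} is a detail the paper leaves implicit, but it is the same argument.
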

\begin{proof}
Using the fact that $\mathcal{N}(G\lbrace{v}\rbrace)\upharpoonright^{G}_{G\lbrace{v}\rbrace}\bigcap\mathcal{N}(G-G\lbrace{v}\rbrace)\upharpoonright^{G}_{G-G\lbrace{v}\rbrace}=\lbrace{\mathbf{0}}\rbrace$ and propositions \ref{t21} and \ref{teo1} we have that $\mathcal{N}(G\lbrace{v}\rbrace)\upharpoonright^{G}_{G\lbrace{v}\rbrace}\bigoplus\mathcal{N}(G-G\lbrace{v}\rbrace)\upharpoonright^{G}_{G-G\lbrace{v}\rbrace}\subseteq\mathcal{N}(G)$. Moreover, by Lemma \ref{t6}  $\mathcal{N}(G)$ and  $\mathcal{N}(G\lbrace{v}\rbrace)\upharpoonright^{G}_{G\lbrace{v}\rbrace}\bigoplus\mathcal{N}(G-G\lbrace{v}\rbrace)\upharpoonright^{G}_{G-G\lbrace{v}\rbrace}$ have the same dimension. Therefore, $$\mathcal{N}(G\lbrace{v}\rbrace)\upharpoonright^{G}_{G\lbrace{v}\rbrace}\bigoplus\mathcal{N}(G-G\lbrace{v}\rbrace)\upharpoonright^{G}_{G-G\lbrace{v}\rbrace}={\mathcal{N}(G)}.$$
\end{proof}

The next Lemma ensures that the set $\lbrace{t_1,t_2,\ldots,t_k}\rbrace$ in Proposition \ref{prob1} is non-empty.

\begin{lemat}\label{lematec2}
Let $T$ be a tree. If $v\notin{Supp(T)}$, then $N(v)\cap{Supp(T-v)}\neq{\emptyset}.$
\end{lemat}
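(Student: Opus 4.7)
The plan is to combine the matching-theoretic characterization of the support of a tree (Lemma~\ref{t11}) with a direct matching manipulation to exhibit the required neighbor of $v$. Since the statement is a pure existence claim, I would avoid any construction of null vectors and instead produce a neighbor $u$ using maximum matchings and then invoke $Supp = EG$ in $T-v$.

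First I would rewrite the hypothesis: by Lemma~\ref{t11}, $Supp(T) = EG(T)$, so $v \notin Supp(T)$ means precisely that \emph{every} maximum matching of $T$ saturates $v$. Fix any such maximum matching $M$, and let $u$ be the unique vertex with $uv \in M$. This $u$ is my candidate; it clearly lies in $N(v)$, so all that remains is to show $u \in Supp(T-v)$.

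The crux is to verify that $M' := M \setminus \{uv\}$ is a maximum matching of $T-v$. Plainly $M'$ is a matching of $T-v$ with $|M'| = \nu(T)-1$. On the other hand, any matching $\widetilde M$ of $T-v$ is also a matching of $T$, so $|\widetilde M| \le \nu(T)$; and if $|\widetilde M| = \nu(T)$, then $\widetilde M$ is a maximum matching of $T$ that does not saturate $v$, contradicting the reformulated hypothesis. Hence $\nu(T-v) = \nu(T)-1$ and $M'$ attains it, while failing to saturate $u$; therefore $u \in EG(T-v)$.

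To finish I would pass back from $EG$ to $Supp$ via Lemma~\ref{t11}. The only mild obstacle is that $T-v$ is in general a forest rather than a tree, so the lemma does not apply verbatim. However, both $Supp$ and $EG$ split over connected components (the adjacency matrix of a forest is block-diagonal, and a maximum matching of a forest is the disjoint union of maximum matchings of its components), so I would apply Lemma~\ref{t11} to the connected component of $T-v$ containing $u$ and conclude $u \in Supp(T-v)$. This yields $u \in N(v) \cap Supp(T-v)$ and the proof is complete; the componentwise bookkeeping is the only point requiring care, the rest is routine.
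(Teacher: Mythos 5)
Your proposal is correct and follows essentially the same route as the paper's own proof: fix a maximum matching $M$ of $T$, use Lemma~\ref{t11} to see that $M$ saturates $v$ via some edge $\{v,u\}$, show $M\setminus\{\{v,u\}\}$ is a maximum matching of $T-v$ by the same contradiction argument, and conclude $u\in Supp(T-v)$ via Lemma~\ref{t11} again. Your remark about applying Lemma~\ref{t11} componentwise because $T-v$ is a forest is a point the paper glosses over, but it is only a minor refinement of the identical argument.
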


\begin{proof}
Let $M\in\mathcal{M}(T)$ and $w\in{N(v)}$ such that $\lbrace{v,w}\rbrace\in{M}$.
As $v\notin{Supp(T)}$ then $v$ is saturated by $M$ according to Lemma \ref{t11}.
 We will prove that $M-\lbrace\lbrace{v,w}\rbrace\rbrace\in\mathcal{M}(T-v)$. Otherwise, there would be a matching $M^{'}$ in $T-v$ such that $\vert{M^{'}}\vert>\vert{M-\lbrace\lbrace{v,w}\rbrace\rbrace}\vert=\vert{M}\vert-1$. Observe that $M^{'}$ is a matching in $T$ as well. As $M\in\mathcal{M}(T)$ we have that $\vert{M}\vert\geq\vert{M^{'}}\vert$. Thus
\begin{eqnarray}\label{equa6}
\vert{M}\vert-1<\vert{M^{'}}\vert\leq{\vert{M}\vert}.
\end{eqnarray}
The inequality  \eqref{equa6} implies $\vert{M^{'}}\vert={\vert{M}\vert}$ because $\vert{M}\vert-1$ and $\vert{M}\vert$ are integer numbers. And, it means that $M^{'}\in\mathcal{M}(T)$. Which is a contradiction, because $M^{'}$ does not sature $v$ and all maximum matchings in $T$ sature $v$. Hence, $M-\lbrace\lbrace{v,w}\rbrace\rbrace\in\mathcal{M}(T-v)$. Moreover, $w$ is non-saturated by the maximum matching $M-\lbrace\lbrace{v,w}\rbrace\rbrace$ in $T-v$, and it means that $w\in{Supp(T-v)}$ by Lemma \ref{t11}.
\end{proof}

\begin{prop}\label{prob1}
Let $G$ be a unicyclic graph of Type $I$, $G\lbrace{v}\rbrace$ its pendant tree such that $v\notin{Supp(G\lbrace{v}\rbrace)}$ and $u,w\in{N(v)\cap{V(G-G\lbrace{v}\rbrace)}}$. Let $N(v)\cap{Supp(G\lbrace{v}\rbrace-v)}=\lbrace{t_1,t_2,\ldots,t_k}\rbrace$. If $y\in\mathcal{N}(G\lbrace{v}\rbrace-v)$ and there is a $x\in{\mathcal{N}(G-G\lbrace{v}\rbrace)}$ such that $x_u+x_w\neq{0}$, then $s\in\mathcal{N}(G)$, where $s=c\cdot x\upharpoonright^{G}_{G-G\lbrace{v}\rbrace}+y\upharpoonright^{G}_{G\lbrace{v}\rbrace-v}$ with $c=\frac{-\sum\limits_{i=1}^{k}y_{t_i}}{x_u+x_w}$.
\end{prop}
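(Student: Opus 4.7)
The plan is to verify directly that $A(G)\,s = \mathbf{0}$ by checking each coordinate, partitioning $V(G)$ into the three disjoint pieces $V(G-G\lbrace{v}\rbrace)$, $\lbrace v\rbrace$, and $V(G\lbrace{v}\rbrace-v)$. By construction, $s_v = 0$, $s_z = c\cdot x_z$ for $z \in V(G-G\lbrace{v}\rbrace)$, and $s_z = y_z$ for $z \in V(G\lbrace{v}\rbrace-v)$. Using the same block decomposition of $A(G)$ as in the proof of Proposition \ref{teo1}, the only edges between $G-G\lbrace{v}\rbrace$ and $G\lbrace{v}\rbrace$ are $vu$ and $vw$; every other edge lies entirely inside one of the two pieces.

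For a vertex $z \in V(G-G\lbrace{v}\rbrace)$, the only vertex of $G\lbrace{v}\rbrace$ possibly adjacent to $z$ is $v$, and since $s_v = 0$ one immediately gets $(A(G)s)_z = c\cdot\bigl(A(G-G\lbrace{v}\rbrace)\,x\bigr)_z = 0$ because $x \in \mathcal{N}(G-G\lbrace{v}\rbrace)$. Symmetrically, for $z \in V(G\lbrace{v}\rbrace-v)$, no vertex outside $G\lbrace{v}\rbrace$ is adjacent to $z$ in $G$, and $s_v = 0$ again, so $(A(G)s)_z = \bigl(A(G\lbrace{v}\rbrace-v)\,y\bigr)_z = 0$ by $y \in \mathcal{N}(G\lbrace{v}\rbrace-v)$. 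These two cases dispose of every coordinate except the one indexed by $v$.

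The only nontrivial step is the $v$-coordinate. Splitting $N(v) = \lbrace u,w\rbrace \cup \bigl(N(v)\cap V(G\lbrace{v}\rbrace)\bigr)$, one computes
\[
(A(G)s)_v = (s_u+s_w) + \sum_{z \in N(v)\cap V(G\lbrace{v}\rbrace)} s_z = c(x_u+x_w) + \sum_{z \in N(v)\cap V(G\lbrace{v}\rbrace)} y_z.
\]
The key reduction, which I expect to be the main obstacle, is that for any $z \in N(v)\cap V(G\lbrace{v}\rbrace)$ with $z \notin Supp(G\lbrace{v}\rbrace-v)$ one has $y_z = 0$. This follows from Lemma \ref{suppcompute}, since an individual vector of $\mathcal{N}(G\lbrace{v}\rbrace-v)$ is supported inside $Supp(G\lbrace{v}\rbrace-v)$. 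Therefore the sum collapses to $\sum_{i=1}^{k} y_{t_i}$, and substituting the chosen value $c = -\bigl(\sum_{i=1}^{k} y_{t_i}\bigr)/(x_u+x_w)$ gives $(A(G)s)_v = 0$. Once this reduction is identified, the verification is a one-line computation, so the proposition follows.
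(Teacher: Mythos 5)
Your proposal is correct and follows essentially the same route as the paper: a direct blockwise verification of $A(G)s=\mathbf{0}$ using the decomposition of $V(G)$ into $V(G-G\lbrace{v}\rbrace)$, $\lbrace{v}\rbrace$ and $V(G\lbrace{v}\rbrace-v)$, with the only nontrivial coordinate being the one at $v$. Your explicit justification that $y_z=0$ for neighbors $z$ of $v$ outside $Supp(G\lbrace{v}\rbrace-v)$ (which really just follows from the definition of the support of $\mathcal{N}(G\lbrace{v}\rbrace-v)$ rather than from Lemma \ref{suppcompute}) is a step the paper performs silently when writing $\mathbf{b}^{t}y=\sum_{i=1}^{k}y_{t_i}$.
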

\begin{proof}
We can sort the adjacency matrix of $G$ as
\begin{eqnarray*}
A(G)&=&\begin{blockarray}{ccccc}
                        & V(G-G\lbrace{v}\rbrace) & v  & V(G\lbrace{v}\rbrace-v)  \\
\begin{block}{r[cccc]}
V(G-G\lbrace{v}\rbrace) & A(G-G\lbrace{v}\rbrace)  &    \mathbf{a}   &     \mathbf{0}                        \\
  v                     &    \mathbf{a}^{t}                 &    0   &     \mathbf{b}^t                      \\
V(G\lbrace{v}\rbrace-v) &   \mathbf{0}^{t}                  &    \mathbf{b}    &   A(G\lbrace{v}\rbrace-v)    \\
\end{blockarray}.\\
\end{eqnarray*}

\noindent Where $\mathbf{a}$ is a submatrix with almost all null entries, except for the entries corresponding to the adjacencies between $v$ and $u$ and $v$ and $w$. The submatrix $\mathbf{b}$ also has almost all null entries, except for the entries corresponding to the adjacencies between $v$ and the vertices of  $G\lbrace{v}\rbrace-v$. Then

\begin{eqnarray*}
A(G)\cdot s&=&\left[\begin{array}{cccccccc}
   \left(\frac{-\sum\limits_{i=1}^{k}y_{t_i}}{x_u+x_w}\right)A(G-G\lbrace{v}\rbrace)x \\
      \left(\frac{-\sum\limits_{i=1}^{k}y_{t_i}}{x_u+x_w}\right)\mathbf{a}^{t}x+\mathbf{b}^{t}y \\
      A(G\lbrace{v}\rbrace-v)y
\end{array}\right]=\left[\begin{array}{cccccccc}
        \mathbf{0}\\
      \left(\frac{-\sum\limits_{i=1}^{k}y_{t_i}}{x_u+x_w}\right)(x_u+x_w)+\sum\limits_{i=1}^{k}y_{t_i}\\
      \mathbf{0}
\end{array}\right]=\mathbf{0}.
\end{eqnarray*}

Therefore, $s\in\mathcal{N}(G).$
\end{proof}

We need the next lemma from \cite{peter} to ensure the existence of the vector $y$ in Theorem \ref{teobasistipoi}.

\begin{lemat}
\label{suppcompute2}
Let $G$ be a graph with $n$ vertices and $S$ a subset of $\mathbb{R}^{n}$ with dimension $k>0$. Then there exists a basis $\mathcal{B}=\lbrace{b_1,\ldots,b_k}\rbrace$ of $S$ satisfying $Supp(G)=Supp_{G}(b_i)$ for all $i=1,2,\ldots,k.$
\end{lemat}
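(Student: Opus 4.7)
The plan is to combine the classical fact that a finite-dimensional real vector space is not the union of finitely many proper subspaces with a generic perturbation argument. Both conditions I must satisfy—each basis vector having full support, and the tuple being a basis—are complements of finitely many algebraic conditions, so generically they can be met at once. (I read the conclusion as $Supp_G(S) = Supp_G(b_i)$, since $S$ is an arbitrary subspace.)

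First I would set $U := Supp_G(S)$ and, for each vertex $u \in V(G)$, consider the linear subspace $H_u := \{x \in S : x_u = 0\}$. By the definition of the support of a subset, $H_u$ is a \emph{proper} subspace of $S$ exactly when $u \in U$. Since $U$ is finite and $\mathbb{R}$ is infinite, $S \neq \bigcup_{u \in U} H_u$, so I can pick $v_0 \in S$ outside every such $H_u$; by construction $Supp_G(v_0) = U$.

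Next, I would fix any basis $(c_1, \ldots, c_k)$ of $S$ and put $b_i(t) := c_i + t\, v_0$ for $i = 1, \ldots, k$ and $t \in \mathbb{R}$. For each $i$ and each $u \in U$, the coordinate $(b_i(t))_u$ is an affine function of $t$ with nonzero slope (because $(v_0)_u \neq 0$), so it vanishes for at most one value of $t$. Hence all but finitely many $t \in \mathbb{R}$ produce $Supp_G(b_i(t)) = U$.

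The main obstacle, and the final step, is to choose $t_1, \ldots, t_k$ so that each $b_i := b_i(t_i)$ has full support \emph{and} $\{b_1, \ldots, b_k\}$ is still linearly independent. Writing $v_0 = \sum_j \beta_j c_j$, the coordinates of the $b_i$'s in the basis $(c_j)$ form the matrix $I + t\beta^{\top}$ with $t = (t_1,\ldots,t_k)^{\top}$, whose determinant equals $1 + \sum_j \beta_j t_j$ by the matrix determinant lemma. Since $v_0 \neq 0$ forces $\beta \neq 0$, linear dependence occurs only on the single hyperplane $\sum_j \beta_j t_j = -1$ of $\mathbb{R}^k$. The finitely many bad values for each coordinate $t_i$ together with this single hyperplane form a proper closed subset of $\mathbb{R}^k$, so a generic choice of $(t_1, \ldots, t_k)$ delivers a basis $\{b_1, \ldots, b_k\}$ of $S$ with $Supp_G(b_i) = Supp_G(S)$ for every $i$, as desired.
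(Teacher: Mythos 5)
Your argument is correct. There is, however, no in-paper proof to compare it against: the paper imports this lemma from Nylen \cite{peter} and states it without proof, so your write-up supplies a self-contained argument where the paper only gives a citation. Your reading of the conclusion as $Supp_G(S)=Supp_G(b_i)$ is the right one (the printed $Supp(G)$ only makes literal sense when $S=\mathcal{N}(G)$; in the one place the lemma is used, Theorem \ref{teobasistipoi}, it is applied with $S=\mathcal{N}(G\lbrace v\rbrace-v)$). Each of your three steps is sound: the existence of $v_0\in S$ with $Supp_G(v_0)=Supp_G(S)$ follows because each $H_u$ with $u\in Supp_G(S)$ is a proper subspace and a real vector space is not a finite union of proper subspaces; each perturbation $c_i+t\,v_0$ loses a supported coordinate for at most $\vert Supp_G(S)\vert$ values of $t$; and the matrix determinant lemma correctly identifies the linearly dependent tuples $(t_1,\dots,t_k)$ as the single affine hyperplane $1+\sum_j\beta_j t_j=0$, so the total bad set is a finite union of proper affine subspaces of $\mathbb{R}^k$ and cannot be all of it. One small simplification: a single common parameter already suffices, since with $t_1=\dots=t_k=t$ the change-of-basis matrix is $I+t\,\mathbf{1}\beta^{\top}$ with $\det(I+t\,\mathbf{1}\beta^{\top})=1+t\sum_j\beta_j$, which vanishes for at most one $t$; so one generic $t\in\mathbb{R}$ handles both the support and the independence requirements and the $k$-dimensional genericity discussion can be avoided.
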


The next result gives a basis for $\mathcal{N}(G)$, where $G$ is a unicyclic graph of Type $I$, using basis of $\mathcal{N}(G\{v\})$ and $\mathcal{N}(G-G\{v\})$.

\begin{teore}\label{teobasistipoi}
Let $G$ be a unicyclic graph of Type $I$, $G\lbrace{v}\rbrace$ its pendant tree such that $v\notin{Supp(G\lbrace{v}\rbrace)}$ and $u,w\in{N(v)\cap{V(G-G\lbrace{v}\rbrace)}}$. Let $N(v)\cap{Supp(G\lbrace{v}\rbrace-v)}=\lbrace{t_1,t_2,\ldots,t_k}\rbrace$. Let $\lbrace{w_1,w_2,\ldots,w_t}\rbrace$ be a basis of $\mathcal{N}(G\lbrace{v}\rbrace)$. Let $\lbrace{b_1,b_2,\ldots,b_p}\rbrace\cup\lbrace{a_1,a_2,\ldots,a_r}\rbrace$ be a basis of $\mathcal{N}(G-G\lbrace{v}\rbrace)$ such that $({b_i})_u+({b_i})_w\neq{0}$ and $({a_j})_u+({a_j})_w{=}{0}$, for $i=1,2,\ldots,p$ and $j=1,2,\ldots,r$. Let $y\in\mathcal{N}(G\lbrace{v}\rbrace-v)$ such that $Supp(G\lbrace{v}\rbrace-v)=Supp_{G\lbrace{v}\rbrace-v}(y)$. Let $c_i=\frac{-\sum\limits_{s=1}^{k}y_{t_s}}{({b_i})_u+({b_i})_w}$, for $i=1,2,\ldots,p$. Define
$$\mathcal{B}_1=\bigcup\limits_{i=1}^{p}\left\lbrace{\left[\begin{array}{cccccccc}
        c_ib_i\\
         0\\
         y\\
\end{array}\right]}\right\rbrace,\mbox{ }
\mathcal{B}_2=\bigcup\limits_{i=1}^{r}\left\lbrace{a_i\upharpoonright^{G}_{G-G\lbrace{v}\rbrace}}\right\rbrace\mbox{ and }
\mathcal{B}_3=\bigcup\limits_{i=1}^{t}\left\lbrace{w_i\upharpoonright^{G}_{G\lbrace{v}\rbrace}}\right\rbrace.$$

\noindent Then $\mathcal{B}_1\cup\mathcal{B}_2\cup\mathcal{B}_3$ is a basis of $\mathcal{N}(G)$.
\end{teore}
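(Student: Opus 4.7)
The plan is to establish the claim in three stages: (i) every vector listed in $\mathcal{B}_1\cup \mathcal{B}_2\cup \mathcal{B}_3$ lies in $\mathcal{N}(G)$, (ii) the total number of listed vectors equals $\eta(G)$, and (iii) the family is linearly independent. With (i) and (ii) in hand, (iii) upgrades the collection to a basis of $\mathcal{N}(G)$.

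Step (i) is immediate from the three propositions that precede the theorem. Vectors in $\mathcal{B}_3$ belong to $\mathcal{N}(G)$ by Proposition \ref{t21}. Vectors in $\mathcal{B}_2$ are extensions of vectors $a_j\in\mathcal{N}(G-G\lbrace v\rbrace)$ for which $(a_j)_u+(a_j)_w=0$, so Proposition \ref{teo1} applies. Vectors in $\mathcal{B}_1$ are precisely the vectors constructed in Proposition \ref{prob1}, with $x=b_i$ and the scalar $c_i$ chosen exactly so that the $v$-row of $A(G)$ evaluates to zero. For step (ii), the hypothesis $v\notin Supp(G\lbrace v\rbrace)$ together with Lemma \ref{t11} guarantees that $v$ is matched by every maximum matching of $G\lbrace v\rbrace$. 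Hence Lemma \ref{t6} gives $\eta(G)=\eta(G\lbrace v\rbrace)+\eta(G-G\lbrace v\rbrace)=t+(p+r)=|\mathcal{B}_1|+|\mathcal{B}_2|+|\mathcal{B}_3|$.

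Step (iii) is where I expect the real work. I would analyse a vanishing linear combination
\[
\sum_{i=1}^{p}\alpha_i B^{(1)}_i+\sum_{j=1}^{r}\beta_j B^{(2)}_j+\sum_{k=1}^{t}\gamma_k B^{(3)}_k=\mathbf{0}
\]
block by block, using the partition $V(G)=V(G-G\lbrace v\rbrace)\sqcup\lbrace v\rbrace\sqcup V(G\lbrace v\rbrace-v)$. The restriction to $V(G-G\lbrace v\rbrace)$ annihilates the $\mathcal{B}_3$-terms and produces $\sum_i\alpha_i c_i b_i+\sum_j\beta_j a_j=\mathbf{0}$; the linear independence of $\lbrace b_1,\ldots,b_p,a_1,\ldots,a_r\rbrace$ inside $\mathcal{N}(G-G\lbrace v\rbrace)$ forces $\alpha_i c_i=0$ and $\beta_j=0$. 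The restriction to $V(G\lbrace v\rbrace)$, where the $\mathcal{B}_2$-contributions vanish and (since $(w_k)_v=0$ because $v\notin Supp(G\lbrace v\rbrace)$) what survives is $\sum_k\gamma_k w_k+\bigl(\sum_i\alpha_i\bigr)\,y\upharpoonright^{G}_{G\lbrace v\rbrace-v}$, reduces to $\sum_k\gamma_k w_k=\mathbf{0}$ once the $\alpha_i$ are known to vanish, and then gives $\gamma_k=0$ from linear independence of $\lbrace w_1,\ldots,w_t\rbrace$.

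The main obstacle is to rule out $c_i=0$: the denominator $(b_i)_u+(b_i)_w$ is nonzero by the choice of the family $\lbrace b_i\rbrace$, so this amounts to verifying $\sum_{s=1}^{k}y_{t_s}\neq 0$. Here the fine choice of $y$, whose existence is guaranteed by Lemma \ref{suppcompute2} together with Lemma \ref{lematec2} (ensuring the indexing set $\lbrace t_1,\ldots,t_k\rbrace$ is nonempty and that $y_{t_s}\neq 0$ for every $s$), must be combined with the hypothesis that some $b_i$ satisfies $(b_i)_u+(b_i)_w\neq 0$; this incompatibility with the alternative decomposition of Theorem \ref{teo11} is what prevents the $y_{t_s}$ from summing to zero and thereby forces $c_i\neq 0$, closing the argument.
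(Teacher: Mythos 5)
Your overall architecture --- membership via Propositions \ref{prob1}, \ref{teo1} and \ref{t21}, the dimension count $\eta(G)=\eta(G\lbrace{v}\rbrace)+\eta(G-G\lbrace{v}\rbrace)=p+r+t$ via Lemma \ref{t6}, and linear independence read off block by block along $V(G-G\lbrace{v}\rbrace)\sqcup\lbrace{v}\rbrace\sqcup V(G\lbrace{v}\rbrace-v)$ --- is exactly the paper's argument, and those parts are sound. The gap is your last paragraph. You correctly isolate the crux, namely that one must know $c_i\neq 0$, i.e.\ $\sum_{s=1}^{k}y_{t_s}\neq 0$, but what you offer for it (``this incompatibility with the alternative decomposition of Theorem \ref{teo11} is what prevents the $y_{t_s}$ from summing to zero'') is not an argument, and the claim you are trying to establish is false as you state it: a full-support $y$ can have $\sum_{s}y_{t_s}=0$. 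Concretely, let $C$ be the triangle $vuw$, let $G\lbrace{v}\rbrace$ be the star with centre $v$ and leaves $a,b$, and attach one pendant vertex $u'$ to $u$. Then $v\notin Supp(G\lbrace{v}\rbrace)=\lbrace{a,b}\rbrace$, the forest $G\lbrace{v}\rbrace-v$ consists of the two isolated vertices $a,b$ with $\lbrace{t_1,t_2}\rbrace=\lbrace{a,b}\rbrace$, and $\mathcal{N}(G-G\lbrace{v}\rbrace)$ is spanned by $b_1=(1,0,-1)$ on $(u',u,w)$ with $(b_1)_u+(b_1)_w=-1\neq 0$, so $p=1$, $r=0$, $t=1$. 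The vector $y=(1,-1)$ satisfies $Supp_{G\lbrace{v}\rbrace-v}(y)=Supp(G\lbrace{v}\rbrace-v)$ yet $y_a+y_b=0$, whence $c_1=0$ and the unique vector of $\mathcal{B}_1$ degenerates to $y\upharpoonright^{G}_{G\lbrace{v}\rbrace-v}$, which coincides with the unique vector of $\mathcal{B}_3$; the union has one element while $\eta(G)=2$. (Also, knowing $y_{t_s}\neq 0$ for each $s$ comes from the full-support condition on $y$, not from Lemma \ref{lematec2}, which only gives $k\geq 1$; and in any case it says nothing about the sum.)

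To be fair, the paper's own proof simply asserts ``$c_i\neq 0$'' at the corresponding point, so you have located a genuine soft spot rather than invented one; but your sketch does not close it and cannot be closed along that line. What is true, and what repairs the statement, is that $y$ can be \emph{chosen} with full support and $\sum_{s}y_{t_s}\neq 0$. Indeed, the functional $\phi(z)=\sum_{s=1}^{k}z_{t_s}$ on $\mathcal{N}(G\lbrace{v}\rbrace-v)$ has kernel equal to the set of restrictions to $G\lbrace{v}\rbrace-v$ of vectors of $\mathcal{N}(G\lbrace{v}\rbrace)$ (extend by zero at $v$ as in the computation of Lemma \ref{lemagv}); since $v\notin Supp(G\lbrace{v}\rbrace)$ forces $\nu(G\lbrace{v}\rbrace-v)=\nu(G\lbrace{v}\rbrace)-1$ and hence $\eta(G\lbrace{v}\rbrace-v)=\eta(G\lbrace{v}\rbrace)+1$, that kernel is a proper subspace, so $\phi\not\equiv 0$. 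As the vectors lacking full support also lie in a finite union of proper subspaces, a $y$ satisfying both conditions exists. With that extra hypothesis on $y$ added, your Step (iii) goes through verbatim.
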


\begin{proof}
Notice that $\mathcal{B}_1\cup\mathcal{B}_2\cup\mathcal{B}_3\subseteq{\mathcal{N}(G)}$ by propositions \ref{prob1}, \ref{teo1} and \ref{t21}, respectively. Moreover,  $\vert\mathcal{B}_1\cup\mathcal{B}_2\cup\mathcal{B}_3\vert=\eta(G\lbrace{v}\rbrace)+\eta(G-G\lbrace{v}\rbrace)$. Now, we will prove that $\mathcal{B}_1\cup\mathcal{B}_2\cup\mathcal{B}_3$ is a set of linearly independent vectors. Given $\alpha_1,\alpha_2,\ldots,\alpha_p,\beta_1,\beta_2,\ldots,\beta_r,\gamma_1,\gamma_2,\ldots,\gamma_t\in\mathbb{R}$ and consider the following linear combination

\begin{eqnarray}\label{equa1}
\sum\limits_{i=1}^{p}\left(\alpha_i\left[\begin{array}{cccccccc}
        c_ib_i\\
         0\\
         y\\
\end{array}\right]\right)+\sum\limits_{j=1}^{r}\left(\beta_j\left[\begin{array}{cccccccc}
        a_j\\
       \mathbf{0}\\
       \end{array}\right]\right)+\sum\limits_{\ell=1}^{t}\left(\gamma_{\ell}\left[\begin{array}{cccccccc}
       \mathbf{0}\\
       w_{\ell}\\
       \end{array}\right]\right)&=&\mathbf{0}\nonumber\\
      \left[\begin{array}{cccccccc}
        \sum\limits_{i=1}^{p}\alpha_i{c_ib_i}+\sum\limits_{j=1}^{r}\beta_j{a_j}\\
         \sum\limits_{i=1}^{p}\alpha_i{\left[\begin{array}{cccccccc}
         0\\
         y\\
\end{array}\right]}+\sum\limits_{\ell=1}^{t}\gamma_{\ell}w_{\ell}\\
\end{array}\right]&=&\mathbf{0}.
\end{eqnarray}

Then $\sum\limits_{i=1}^{p}\alpha_i{c_ib_i}+\sum\limits_{j=1}^{r}\beta_j{a_j}=0$. But  $\lbrace{b_1,b_2,\ldots,b_p}\rbrace\cup\lbrace{a_1,a_2,\ldots,a_r}\rbrace$ is a basis of $\mathcal{N}(G-G\lbrace{v}\rbrace)$ and $c_i\neq{0}$, hence $\alpha_i=\beta_j=0$ for $i=1,2,\ldots,p$ and $j=1,2,\ldots,r$. Using \eqref{equa1} we have that $\sum\limits_{\ell=1}^{t}\gamma_{\ell}w_{\ell}=0$. As  $\lbrace{w_1,w_2,\ldots,w_t}\rbrace$ is a basis of $\mathcal{N}(G\lbrace{v}\rbrace)$, then $\gamma_{\ell}=0$ for $\ell=1,2,\ldots,t$.

We have proved that $\mathcal{B}_1\cup\mathcal{B}_2\cup\mathcal{B}_3$ is a set of linearly independent vectors in $\mathcal{N}(G)$ and by Lemma \ref{t6} $\eta(G)=\vert\mathcal{B}_1\cup\mathcal{B}_2\cup\mathcal{B}_3\vert$. Therefore $\mathcal{B}_1\cup\mathcal{B}_2\cup\mathcal{B}_3$ is a basis of $\mathcal{N}(G)$.
\end{proof}

\section{Null space of unicyclic graphs of Type II}
\label{sec:nulltipoII}
In this section, we obtain a basis for the null space of a unicyclic graph $G$ of Type $II$ using basis for  $\mathcal{N}(G-C)$ and $\mathcal{N}(G\{v_{i}\})$. Where $C$ is the cycle of $G$ and $v_{i}$  is a vertex in $C$.

The next lemma will be used in Proposition \ref{tipe2subspace}.

\begin{lemat}
\cite{Maikon}
\label{non supp}
Let $G$ be a unicyclic graph and $C$ its cycle. Let $G\lbrace{v}\rbrace$ be a pendant tree such that $v\in{Supp(G\lbrace{v}\rbrace)}$. If $u\in{N(v)\cap{V(G\lbrace{v}\rbrace)}}$, then $u\notin{Supp(G-C)}$.
\end{lemat}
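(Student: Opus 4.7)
The plan is to translate everything into matching theory via Lemma~\ref{t11} (which identifies the support of a tree with its Edmonds--Gallai set) and then reach a contradiction. Roughly, if one had both a maximum matching of $T := G\{v\}$ avoiding $v$ \emph{and} a maximum matching of the component of $G-C$ containing $u$ avoiding $u$, one could splice them together with the edge $\{u,v\}$ to produce a matching of $T$ of size $\nu(T)+1$.

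First I would set up the structural picture. Since $T := G\{v\}$ is a tree, the components of $T - v$ are trees $T_{1}, \ldots, T_{k}$, and because $T$ is acyclic each $T_{j}$ contains exactly one neighbor of $v$, say $u_{j}$, so $N_{T}(v) = \{u_{1}, \ldots, u_{k}\}$. Relabel so that $u = u_{1}$ and write $T_{u} := T_{1}$. Because $G - C$ is the disjoint union of the subforests $G\{v_{i}\} - v_{i}$ over cycle vertices $v_{i}$, the component of $G-C$ containing $u$ is exactly $T_{1}$, and thus $u \in Supp(G-C)$ if and only if $u \in Supp(T_{1})$.

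Next I would exploit the hypothesis $v \in Supp(T)$. By Lemma~\ref{t11} there is a maximum matching $M$ of $T$ not saturating $v$; no edge of $M$ is incident to $v$, so $M$ splits as $M = M_{1} \sqcup \cdots \sqcup M_{k}$ with $M_{j} \subseteq E(T_{j})$. Combining the bound $|M| = \sum |M_{j}| \le \sum \nu(T_{j})$ with the opposite inequality obtained by gluing maximum matchings of the $T_{j}$'s into a matching of $T$, one obtains $\nu(T) = \sum_{j=1}^{k} \nu(T_{j})$, and consequently each $M_{j}$ is itself a maximum matching of $T_{j}$.

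The final step is the contradiction. Assume $u \in Supp(G-C)$, equivalently $u \in Supp(T_{1})$; Lemma~\ref{t11} produces a maximum matching $M_{1}'$ of $T_{1}$ not saturating $u$, and then
\[
M' := \{\{u,v\}\} \cup M_{1}' \cup M_{2} \cup \cdots \cup M_{k}
\]
is a valid matching of $T$ (the edge $\{u,v\}$ is disjoint from every $M_{j}$, because $M_{1}'$ misses $u$ and $M_{2}, \ldots, M_{k}$ lie in components of $T - v$ other than $T_{1}$), of cardinality $|M'| = 1 + \sum_{j} \nu(T_{j}) = \nu(T) + 1$, contradicting the maximality of $\nu(T)$. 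I expect the only delicate point to be the identity $\nu(T) = \sum_{j} \nu(T_{j})$, which is \emph{not} true for arbitrary trees with a distinguished vertex but holds here precisely because the hypothesis $v \in Supp(T)$ supplies a maximum matching of $T$ supported entirely in $T - v$; the remainder of the argument is bookkeeping.
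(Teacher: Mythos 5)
Your argument is correct and complete. Note that the paper you are working from does not actually prove Lemma~\ref{non supp}: it imports it from \cite{Maikon} as a black box, so there is no in-paper proof to compare against. Your proof is a sound, self-contained replacement. The key steps all check out: the identification of the component of $G-C$ containing $u$ with the component $T_{1}$ of $G\lbrace{v}\rbrace-v$ containing $u$ (valid because $G-C$ is precisely the disjoint union of the forests $G\lbrace{v_i}\rbrace-v_i$); the use of Lemma~\ref{t11} twice, once in $T=G\lbrace{v}\rbrace$ to extract a maximum matching avoiding $v$ and once in $T_{1}$ to extract one avoiding $u$; the identity $\nu(T)=\sum_{j}\nu(T_{j})$, which you correctly flag as the one point that genuinely uses the hypothesis $v\in Supp(T)$; and the final splice with the edge $\lbrace{u,v}\rbrace$ yielding a matching of size $\nu(T)+1$. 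This is the same style of matching-theoretic argument the authors use elsewhere in the paper (e.g.\ in Lemma~\ref{lematec2} and Lemma~\ref{lematec3}), so it fits naturally with the surrounding machinery.
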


In the following proposition we find vectors of the $\mathcal{N}(G)$ using the null space of the subtrees obtained from $G-C$.

\begin{prop}
\label{tipe2subspace}
If $G$ is a unicyclic graph of Type $II$ and $C$ its cycle, then $\mathcal{N}(G-C)\upharpoonright^{G}_{G-C}\subseteq{\mathcal{N}(G)}$.
\end{prop}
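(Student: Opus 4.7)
The plan is to take an arbitrary $x \in \mathcal{N}(G-C)$, extend it by zero on the cycle vertices, and verify directly that $A(G)$ annihilates the extension. To set up the bookkeeping, I would partition $V(G) = V(C) \cup V(G-C)$ and write
$$A(G) = \begin{pmatrix} A(C) & M \\ M^t & A(G-C) \end{pmatrix},$$
where $M$ records the edges joining cycle vertices to non-cycle vertices. Since distinct pendant trees $G\lbrace v_i\rbrace$ share only cycle vertices, the row of $M$ indexed by $v_i$ is nonzero exactly on coordinates corresponding to the neighbors of $v_i$ inside $G\lbrace v_i\rbrace - v_i$.

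Setting $\tilde{x} = x\upharpoonright^{G}_{G-C}$, block multiplication produces
$$A(G)\tilde{x} = \begin{pmatrix} Mx \\ A(G-C)\,x \end{pmatrix},$$
and the lower block vanishes by hypothesis on $x$. For the upper block, the entry at $v_i$ equals $\sum_{u \in N(v_i) \cap V(G\lbrace v_i\rbrace - v_i)} x_u$, so the task reduces to proving $x_u = 0$ for every such neighbor $u$.

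This is the step where the Type $II$ hypothesis enters. By Lemma \ref{c3}, every cycle vertex satisfies $v_i \in Supp(G\lbrace v_i\rbrace)$, so Lemma \ref{non supp} applies to each neighbor $u \in N(v_i) \cap V(G\lbrace v_i\rbrace)$ and gives $u \notin Supp(G-C)$. Combining this with Lemma \ref{suppcompute} and the definition of the support of a subspace, the condition $u \notin Supp_{G-C}(\mathcal{N}(G-C))$ forces $x_u = 0$ for every $x \in \mathcal{N}(G-C)$. Hence every entry of $Mx$ is zero, $A(G)\tilde{x} = \mathbf{0}$, and $\tilde{x} \in \mathcal{N}(G)$. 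The argument is essentially a clean block computation; I do not expect a real obstacle beyond carefully identifying which coordinates are picked out by $M$ and invoking Lemma \ref{non supp} on exactly the correct set of neighbors.
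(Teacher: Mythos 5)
Your proof is correct and follows essentially the same route as the paper's: both hinge on combining Lemma \ref{c3} with Lemma \ref{non supp} to conclude that every non-cycle neighbor of a cycle vertex lies outside $Supp(G-C)$, so the off-diagonal block annihilates the extended vector. The only cosmetic difference is that you work with a single $V(C)\cup V(G-C)$ block partition and an arbitrary $x\in\mathcal{N}(G-C)$, whereas the paper first restricts to each connected component $T_i$ of $G-C$ and then assembles the direct sum.
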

\begin{proof}
Consider $G-C=\bigcup\limits^{k}_{i=1}{T_i}$, where $T_i$ is a connected component of $G-C$.
\noindent Given $x\in\mathcal{N}(T_i)$. We will prove that $x\upharpoonright^{G}_{T_i}\in{\mathcal{N}(G)}$.
Remember that given $v\in V(C)$, by Lemma \ref{c3}, we have that $v\in Supp(G\lbrace{v}\rbrace)$. If  $u_{i}\in V(T_{i})\cap N(v)$, then by Lemma \ref{non supp} $u_i\notin Supp(G-C)$. It means that $u_i\notin{Supp(T_{i})}$, that is, $x_{u_i}=0$.

Note that, we can order the adjacency matrix of $G$ as follows

\begin{scriptsize}
\begin{eqnarray*}
A(G)&=&\begin{blockarray}{ccccccccc}
         & V(T_1)    & V(T_2)  & \cdots & V(T_i) & \cdots  & V(T_k) &  V(C) \\
\begin{block}{r[cccccccc]}
V(T_1)      & A(T_1)       &  \mathbf{0}   &   \cdots    &  \mathbf{0}         &  \cdots           &  \mathbf{0}  &  M_1        \\
V(T_2) &  \mathbf{0}^t      &   A(T_2)    &      \vdots                 &  \mathbf{0}  & \vdots        &   \mathbf{0}   &   M_2  \\
\vdots &  \vdots     & \vdots & \ddots &   \vdots    &   \vdots &  \vdots &   \vdots       \\
V(T_i)      &  \mathbf{0}^t      & \mathbf{0}^t &  \cdots    &  A(T_i)    & \cdots       &    \mathbf{0} &      M_i        \\
\vdots  & \vdots       & \vdots &  \vdots      &  \vdots      & \ddots  &  \vdots       &  \vdots  \\
V(T_k)   &   \mathbf{0}^t          & \mathbf{0}^t & \cdots &  \mathbf{0}^t  &  \cdots  & A(T_k) &        M_k           \\
V(C) &  M_1^t     & M_2^t &  \cdots     &  M_i^t       &  \cdots       &  M_k^t       &  A(C)                  \\
\end{block}
\end{blockarray}.\\
\end{eqnarray*}
\end{scriptsize}
	
\noindent Where $M_i$ is a submatrix of $A(G)$ with almost all null entries, with the exception of the single entry corresponding to the adjacency between vertices $v\in{V(C)}$ and $u_i\in{V(T_i)}$, for $i\in{\lbrace{1,2,\ldots,k}\rbrace}$. Thus
\begin{eqnarray*}
A(G)\cdot x\upharpoonright^{G}_{T_i}=\left[\begin{array}{cccccccc}
    \mathbf{0} \\
    A(T_i)x\\
      M_i^t.x\\
      \mathbf{0} \\
\end{array}\right]=\left[\begin{array}{cccccccc}
            \mathbf{0}\\
    \mathbf{0}\\
      x_{u_i}.1\\
      \mathbf{0}\\
\end{array}\right]=\mathbf{0}.
\end{eqnarray*}

Therefore, $\mathcal{N}(T_i)\upharpoonright^{G}_{T_i}\subseteq\mathcal{N}(G)$.
And, since that $G-C$ is the union of disjoint graphs we have that $$\bigoplus\limits_{i=1}^{k}\mathcal{N}(T_i)\upharpoonright^{G}_{T_i}=\mathcal{N}(G-C)\upharpoonright^{G}_{G-C}\subseteq{\mathcal{N}(G)}.$$
\end{proof}

The next theorem shows that the null space of unicyclic graphs of Type $II$ with a cycle that has lenght diferent of $4k$ is equal to the extended null space of the forest $G-C$.

\begin{teore}\label{t40}
 If $G$ is a unicyclic graph of Type $II$ and $C$ its cycle such that $\vert{V(C)}\vert\neq{4}k$, with $k\in\mathbb{N}$, then $\mathcal{N}(G-C)\upharpoonright^{G}_{G-C}={\mathcal{N}(G)}$.
\end{teore}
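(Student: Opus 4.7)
The plan is to establish the claimed identity by a dimension-count argument, using the inclusion already obtained in Proposition \ref{tipe2subspace} together with the nullity formulas from Lemma \ref{t6} and Lemma \ref{nullicycle}.

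First, I would recall that Proposition \ref{tipe2subspace} gives us the inclusion
\[
\mathcal{N}(G-C)\upharpoonright^{G}_{G-C} \subseteq \mathcal{N}(G),
\]
so it suffices to show that these two subspaces have the same dimension. The extension map sending $x \in \mathcal{N}(G-C)$ to $x\upharpoonright^{G}_{G-C}$ is clearly injective (it merely appends zero coordinates on $V(C)$, so the kernel is trivial), hence
\[
\dim\bigl(\mathcal{N}(G-C)\upharpoonright^{G}_{G-C}\bigr) = \dim\mathcal{N}(G-C) = \eta(G-C).
\]

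Next, I would invoke Lemma \ref{t6} for the Type $II$ case, which yields $\eta(G) = \eta(G-C) + \eta(C)$. By Lemma \ref{nullicycle}, the hypothesis $|V(C)| \neq 4k$ forces $\eta(C) = 0$. Combining these,
\[
\dim\mathcal{N}(G) = \eta(G) = \eta(G-C) = \dim\bigl(\mathcal{N}(G-C)\upharpoonright^{G}_{G-C}\bigr).
\]
Since one subspace is contained in the other and they have the same (finite) dimension, they must coincide, proving the theorem.

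There is no real obstacle here: the two ingredients (the inclusion and the nullity formula for Type $II$ unicyclic graphs) have already been established, and the cycle-length hypothesis is exactly what is needed to kill the contribution $\eta(C)$ from the cycle. The only thing worth being careful about is to note explicitly that the extension operator $x \mapsto x\upharpoonright^{G}_{G-C}$ preserves dimension, which is immediate from its definition.
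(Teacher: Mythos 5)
Your proposal is correct and follows essentially the same route as the paper's own proof: the inclusion from Proposition \ref{tipe2subspace}, the vanishing of $\eta(C)$ via Lemma \ref{nullicycle}, and the dimension count from Lemma \ref{t6}. Your explicit remark that the extension map is injective (hence dimension-preserving) is a small detail the paper leaves implicit, but the argument is the same.
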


\begin{proof}
As $\vert{V(C)}\vert\neq{4}k$, we have that $\eta(C)=0$ by Lemma \ref{nullicycle}. Using this in Lemma \ref{t6} we obtain that $\eta{(G-C)}=\eta{(G)}$. And, the Proposition \ref{tipe2subspace} gives that $\mathcal{N}(G-C)\upharpoonright^{G}_{G-C}\subseteq{\mathcal{N}(G)}$. These facts imply that $\mathcal{N}(G-C)\upharpoonright^{G}_{G-C}={\mathcal{N}(G)}$.
\end{proof}

According to lemmas \ref{nullicycle} and \ref{t6}, we still have to find two vectors for $\mathcal{N}(G)$ when $\vert{V(C)}\vert=4k$. And, in the next proposition we construct these two vectors.

\begin{prop}\label{pc4}
 Let $G$ be a unicyclic graph of Type $II$ and $C$ its cycle. Let $V(C)=\lbrace{v_1,v_2,\ldots,v_{4k}}\rbrace$, for some $k\in\mathbb{N}$, such that $N(v_i)\cap{V(C)}=\lbrace{v_{i-1},v_{i+1}}\rbrace$ and $v_0=v_{4k}$. Let $x(v_i)\in\mathcal{N}(G\lbrace{v_i}\rbrace)$ such that $Supp(G\lbrace{v_i}\rbrace)=Supp_{G\lbrace{v_i}\rbrace}(x(v_i))$. If $x(v_i)_{v_i}=1$, then  $z_1,z_2\in\mathcal{N}(G)$, where
 $$z_1=\sum\limits_{i=1}^{2k}(-1)^ix(v_{2i-1})\upharpoonright^{G}_{G\lbrace{v_{2i-1}}\rbrace}\mbox{ and }z_2=\sum\limits_{i=1}^{2k}(-1)^ix(v_{2i})\upharpoonright^{G}_{G\lbrace{v_{2i}}\rbrace}.$$

\end{prop}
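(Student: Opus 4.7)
The strategy is to verify directly that $A(G)z_1 = \mathbf{0}$ coordinate-by-coordinate; the statement for $z_2$ then follows by an identical argument after swapping the roles of odd- and even-indexed cycle vertices. The organizing fact is that the vertex set partitions as the disjoint union $V(G) = \bigsqcup_{j=1}^{4k} V(G\{v_j\})$, so $z_1$ is completely determined by its restriction to each pendant tree: on $V(G\{v_{2i-1}\})$ the restriction equals $(-1)^i\,x(v_{2i-1})$, while on $V(G\{v_{2i}\})$ it is the zero vector.

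For a non-cycle vertex $u \in V(G\{v_j\}) \setminus \{v_j\}$, every neighbor of $u$ in $G$ already lies in $V(G\{v_j\})$, because by definition $G\{v_j\}$ is the maximal induced connected subgraph of $G$ containing $v_j$ and no other cycle vertex. Hence $(A(G)z_1)_u$ equals the $u$-th entry of $A(G\{v_j\})$ applied to the restriction of $z_1$ to $V(G\{v_j\})$; since this restriction is a scalar multiple of $x(v_j)$ (or the zero vector) and $x(v_j) \in \mathcal{N}(G\{v_j\})$, the entry vanishes.

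The real work is at the cycle vertices. At $v_j$ the neighbors split into the pendant-tree neighbors, which lie in $V(G\{v_j\})$, and the two cycle neighbors $v_{j-1}$ and $v_{j+1}$. The pendant-tree contribution equals $(-1)^i\,(A(G\{v_j\})\,x(v_j))_{v_j} = 0$ when $j = 2i-1$ is odd, and vanishes trivially when $j$ is even. It remains to show $(z_1)_{v_{j-1}} + (z_1)_{v_{j+1}} = 0$. If $j$ is odd, both cycle neighbors are even-indexed, so both entries are zero. If $j = 2\ell$ is even, the normalization $x(v_i)_{v_i}=1$ yields $(z_1)_{v_{2\ell-1}} = (-1)^{\ell}$ and $(z_1)_{v_{2\ell+1}} = (-1)^{\ell+1}$, with indices read cyclically modulo $4k$; these consecutive signs cancel for all $1 \le \ell < 2k$.

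The only place the hypothesis $|V(C)| = 4k$ is genuinely used is the wrap-around case $\ell = 2k$, where $v_{2\ell+1} = v_1$ still carries coefficient $(-1)^{1}$: one needs $(-1)^{2k} + (-1)^{1} = 0$, which holds precisely because $2k$ is even. For a cycle of length $4k+2$ the analogous wrap-around would instead leave a residual sign, consistent with Lemma \ref{nullicycle}. This wrap-around is the only real subtlety; everything else is a routine matching of indices against the block structure of $A(G)$.
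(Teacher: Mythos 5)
Your proof is correct and follows essentially the same route as the paper's: a direct coordinate-by-coordinate verification that $A(G)z_1=\mathbf{0}$, using that each $x(v_i)$ lies in $\mathcal{N}(G\lbrace{v_i}\rbrace)$ to kill the pendant-tree contributions and that the alternating signs on consecutive odd-indexed (resp.\ even-indexed) cycle vertices cancel at the even-indexed (resp.\ odd-indexed) ones. Your write-up is in fact somewhat more explicit than the paper's, which folds the non-cycle coordinates and the wrap-around between $v_{4k}$ and $v_1$ into a single terse case analysis.
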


\begin{proof}
Notice that
\begin{eqnarray*}
\left(A(G)x(v_i)\upharpoonright^{G}_{G\lbrace{v_{i}}\rbrace}\right)_{v_j}&=&\sum\limits_{u\in{N(v_j)}}\left(x(v_i)\upharpoonright^{G}_{G\lbrace{v_{i}}\rbrace}\right)_u\\
&=&\begin{cases}
1, & \mbox{if }j\in\lbrace{1,\ldots,4k}\rbrace\mbox{ and }j=i-1\mbox{ or }j=i+1;\\
0, & \mbox{otherwise}.
\end{cases}
\end{eqnarray*}

\noindent Thus
\begin{eqnarray*}
\left({A(G)z_1}\right)_{v_j}&=&\left(A(G)\sum\limits_{i=1}^{2k}(-1)^ix(v_{2i-1})\upharpoonright^{G}_{G\lbrace{v_{2i-1}}\rbrace}\right)_{v_j}\\
&=&\sum\limits_{i=1}^{2k}(-1)^i\left(A(G)x(v_{2i-1})\upharpoonright^{G}_{G\lbrace{v_{2i-1}}\rbrace}\right)_{v_j}\\
&=&\sum\limits_{i=1}^{2k}(-1)^i\sum\limits_{u\in{N(v_j)}}\left(x(v_{2i-1})\upharpoonright^{G}_{G\lbrace{v_{2i-1}}\rbrace}\right)_u\\
&=&\begin{cases}
\sum\limits_{i=1}^{2k}(-1)^i, & \mbox{if }j\in\lbrace{1,\ldots,4k}\rbrace\mbox{ and }j=2i-2\mbox{ or }j=2i;\\
0, & \mbox{otherwise}.
\end{cases}\\
&=&0.
\end{eqnarray*}

\noindent And similarly  we conclude that $A(G)z_2=\mathbf{0}.$

\noindent Therefore, $z_1$ and $z_2\in\mathcal{N}(G)$.
\end{proof}

Now, we are able to present a basis of the null space of a unicyclic graph of Type $II$ when $\vert{V(C)}\vert=4k$.

\begin{teore}\label{tbasisc4}
 Let $G$ be a unicyclic graph of Type $II$ and $C$ its cycle. Let $V(C)=\lbrace{v_1,v_2,\ldots,v_{4k}}\rbrace$, for some $k\in\mathbb{N}$, such that $N(v_i)\cap{V(C)}=\lbrace{v_{i-1},v_{i+1}}\rbrace$ and $v_0=v_{4k}$. Let $x(v_i)\in\mathcal{N}(G\lbrace{v_i}\rbrace)$ such that  $Supp(G\lbrace{v_i}\rbrace)=Supp_{G\lbrace{v_i}\rbrace}(x(v_i))$. Let $\lbrace{w_1,w_2,\ldots,w_t}\rbrace$ be a basis of $\mathcal{N}(G-C)$. If $x(v_i)_{v_i}=1$, then $\mathcal{B}\cup{\lbrace{z_1,z_2}\rbrace}$ is a basis of $\mathcal{N}(G)$, where  $$ \mathcal{B}=\bigcup\limits_{i=1}^{t}\left\lbrace{w_i\upharpoonright^{G}_{G-C}}\right\rbrace,\mbox{ }z_1=\sum\limits_{i=1}^{2k}(-1)^ix(v_{2i-1})\upharpoonright^{G}_{G\lbrace{v_{2i-1}}\rbrace}\mbox{ and } z_2=\sum\limits_{i=1}^{2k}(-1)^ix(v_{2i})\upharpoonright^{G}_{G\lbrace{v_{2i}}\rbrace}.$$
\end{teore}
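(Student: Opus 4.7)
The plan is to verify three properties of the set $\mathcal{B}\cup\{z_1,z_2\}$: containment in $\mathcal{N}(G)$, cardinality equal to $\eta(G)$, and linear independence. Containment is immediate from the preceding propositions: by Proposition \ref{tipe2subspace} each extension $w_i\upharpoonright^{G}_{G-C}$ lies in $\mathcal{N}(G)$, and by Proposition \ref{pc4} so do $z_1,z_2$. For cardinality, Lemma \ref{t6} (Type $II$ case) together with Lemma \ref{nullicycle} yield $\eta(G)=\eta(G-C)+\eta(C)=t+2$, which matches $|\mathcal{B}|+2$.

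The heart of the argument is linear independence, which I would prove by restricting a hypothetical vanishing combination to the cycle $V(C)$. Three observations drive the proof. First, every $w_i\upharpoonright^{G}_{G-C}$ vanishes identically on $V(C)$ by definition of the extension. Second, distinct pendant trees $G\{v_j\}$ intersect $V(C)$ only at their own root, so at the cycle position $v_{2j-1}$ only the $i=j$ summand of $z_1=\sum_{i=1}^{2k}(-1)^{i}x(v_{2i-1})\upharpoonright^{G}_{G\{v_{2i-1}\}}$ contributes, giving
\[
(z_1)_{v_{2j-1}}=(-1)^{j}\, x(v_{2j-1})_{v_{2j-1}}=(-1)^{j},
\]
thanks to the normalization $x(v_i)_{v_i}=1$. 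Third, none of the pendant trees $G\{v_{2i-1}\}$ contains $v_{2j}$, so $(z_1)_{v_{2j}}=0$; symmetrically, $(z_2)_{v_{2j-1}}=0$ and $(z_2)_{v_{2j}}=(-1)^{j}$.

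With these facts, I would take a vanishing combination
\[
\sum_{i=1}^{t}\alpha_i\bigl(w_i\upharpoonright^{G}_{G-C}\bigr)+\beta_1 z_1+\beta_2 z_2=\mathbf{0},
\]
evaluate it at $v_1$ to obtain $\beta_1(-1)=0$, hence $\beta_1=0$, and evaluate at $v_2$ to obtain $\beta_2=0$. The residual identity $\sum_i\alpha_i(w_i\upharpoonright^{G}_{G-C})=\mathbf{0}$, restricted to $V(G-C)$, becomes $\sum_i\alpha_i w_i=\mathbf{0}$ in $\mathcal{N}(G-C)$, and linear independence of $\{w_1,\ldots,w_t\}$ forces every $\alpha_i=0$.

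The main subtlety lies in step (b) above: the nonvanishing of $(z_1)_{v_{2j-1}}$ and $(z_2)_{v_{2j}}$ is what allows the cycle coordinates to separate $z_1$ and $z_2$ from $\mathcal{B}$ and from each other. This rests on the normalization $x(v_i)_{v_i}=1$, whose existence is guaranteed because $G$ is of Type $II$ (so $v_i\in Supp(G\{v_i\})$ by Lemma \ref{c3}) combined with Lemma \ref{suppcompute2}, which permits choosing a null vector of $G\{v_i\}$ whose support is all of $Supp(G\{v_i\})$ and then rescaling so that its $v_i$-entry equals $1$.
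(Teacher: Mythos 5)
Your proposal is correct and follows essentially the same route as the paper's proof: containment via Propositions \ref{tipe2subspace} and \ref{pc4}, the dimension count via Lemmas \ref{nullicycle} and \ref{t6}, and linear independence by evaluating a vanishing combination at the cycle coordinates (where the $w_i$ extensions vanish and $z_1,z_2$ alternate in sign on odd and even positions respectively) before restricting the residual to $V(G-C)$. Your version is in fact slightly more explicit than the paper's about why only the $i=j$ summand contributes at $v_{2j-1}$, but the argument is the same.
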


\begin{proof}
Using the propositions \ref{tipe2subspace} and \ref{pc4} we observe that $\mathcal{B}\cup{\lbrace{z_1,z_2}\rbrace}\subseteq{\mathcal{N}(G)}$. Using the Lemma \ref{nullicycle} and the hypothesis that $\vert{V(C)}\vert=4k$ we have that $\eta(C)=2$.
Moreover,  $\vert\mathcal{B}\cup{\lbrace{z_1,z_2}\rbrace}\vert=t+2=\eta(G-C)+\eta(C)$. And, as $G$ is a unicyclic graph of Type $II$, we have $\eta(G)=\eta(G-C)+\eta(C)$ according to Lemma \ref{t6}. Therefore, $\vert\mathcal{B}\cup{\lbrace{z_1,z_2}\rbrace}\vert=\eta(G)$ and now we just have to prove that
$\mathcal{B}\cup{\lbrace{z_1,z_2}\rbrace}$ is a linearly independent set of vectors.
Given $\alpha_1,\alpha_2,\ldots,\alpha_t,\gamma_1,\gamma_2\in\mathbb{R}$, suppose that

\begin{eqnarray}\label{equa9}
\gamma_1z_1+\gamma_2z_2+\sum\limits_{i=1}^{t}\alpha_iw_i\upharpoonright^{G}_{G-C}=0.
\end{eqnarray}

For $j\in\lbrace{1,\ldots,4k}\rbrace$, we have that
\begin{eqnarray}\label{equa8}
\left(\gamma_1z_1+\gamma_2z_2+\sum\limits_{i=1}^{t}\alpha_iw_i\upharpoonright^{G}_{G-C}\right)_{v_j}=\begin{cases}
\gamma_1(-1)^i, & \mbox{if }j=2i-1;\\
\gamma_2(-1)^i, & \mbox{if }j=2i.
\end{cases}
\end{eqnarray}

Comparing \eqref{equa9} and \eqref{equa8} we can conclude that $\gamma_1(-1)^i =\gamma_2(-1)^i=0$, and it means that $\gamma_1=\gamma_2=0$. Substituting the values of $\gamma_1$ and $\gamma_2$ in \eqref{equa9}, we obtain the following

\begin{eqnarray}\label{equa12}
\sum\limits_{i=1}^{t}\alpha_iw_i\upharpoonright^{G}_{G-C}=0=\left[\begin{array}{cccccccc}
       \sum\limits_{i=1}^{t}\alpha_iw_{i}\\
     0\\
       \end{array}\right].
\end{eqnarray}

Then $\sum\limits_{i=1}^{t}\alpha_i w_{i}=0$, and using the fact that $\lbrace{w_1,w_2,\ldots,w_t}\rbrace$ is a basis of $\mathcal{N}(G-C)$ we conclude that $\alpha_i = 0$ for $i=1,2,\ldots,t$. And it proves that $\mathcal {B}\cup\lbrace {z_1, z_2}\rbrace $ is a linearly independent set of vectors.
\end{proof}
\section{Support, Core and $N$-vertices sets of Unicyclic Graphs of Type $I$}\label{sec:suptipoI}

In this section, we present the null decomposition of graphs and we study the support, core and $N$-vertices of unicyclic graphs of Type $I$.
We divide the unicyclic graphs of Type I into four disjoint sets. More precisely, the unicyclic graphs of Type I must satisfy the hypothesis in one of the following propositions: \ref{ti1}, \ref{ti4}, \ref{ti6} or \ref{ti3}.

The definition of null decomposition of trees from \cite{tree} can directly be extended for graphs in general as follows.

\begin{defn}
Let $G$ be a graph. The $S$-graph of $G$, denoted by $\mathcal{G}_S(G)$, is the induced subgraph defined by the closed neighborhood of  $Supp(G)$ in $G$: $\mathcal{G}_S(G)=G\langle{N[Supp(G)]}\rangle.$
The $N$-graph  of $G$, denoted by $\mathcal{G}_N(G)$, is defined as the remaining graph as follows $\mathcal{G}_N(G)=G-\mathcal{G}_S(G).$
Then, the Null Decomposition of $G$ is the pair $\left(\mathcal{G}_S(G),\mathcal{G}_N(G)\right)$. And, $V(\mathcal{G}_N(G))$ is called the $N$-vertices set of $G$.
\end{defn}

\begin{defn}
The core of $G$, denoted by $Core(G)$, is defined to be the set of all the neighbors of the supported vertices of $G$:
$Core(G)=\bigcup\limits_{v\in{Supp(G)}}{N(v)}.$
\end{defn}

Now, we give an example of the null decomposition of a unicyclic graph.

\usetikzlibrary{shapes,snakes}
\tikzstyle{vertex}=[circle,draw,minimum size=0.1pt,inner sep=0.1pt]
\tikzstyle{edge} = [draw,thick,-]
\tikzstyle{matched edge} = [draw,snake=zigzag,line width=1pt,-]
\begin{figure}[h!]
\begin{center}
\begin{scriptsize}
\begin{center}
\begin{tikzpicture}[scale=1.1,auto,swap]
\node[draw,star,star points=9,star point ratio=0.6,label=below left:] (1) at (0,0) {$v_1$};
\node[draw,star,star points=9,star point ratio=0.6,label=below left:] (2) at (-1,-1) {$v_2$};
\node[draw,star,star points=9,star point ratio=0.6,label=below left:] (3) at (1,-1) {$v_3$};
\node[draw,star,star points=9,star point ratio=0.6,label=below left:] (4) at (0.5,-2) {$v_{4}$};
\node[draw,star,star points=9,star point ratio=0.6,label=below left:] (5) at (-0.5,-2) {$v_{5}$};
\node[draw,circle,label=below left:] (6) at (0,1) {$v_6$};
\node[draw,rectangle,label=below left:] (7) at (-1,1) {$v_7$};
\node[draw,rectangle,label=below left:] (8) at (1,1) {$v_8$};
\node[draw,rectangle,label=below left:] (9) at (0,2) {$v_9$};
\node[draw,star,star points=9,star point ratio=0.6,label=below left:] (10) at (1,0) {$v_{10}$};
\node[draw,star,star points=9,star point ratio=0.6,label=below left:] (11) at (2,-1) {$v_{11}$};
\node[draw,star,star points=9,star point ratio=0.6,label=below left:] (12) at (3,-1) {$v_{12}$};
\node[draw,star,star points=9,star point ratio=0.6,label=below left:] (13) at (4,-1) {$v_{13}$};
\node[draw,star,star points=9,star point ratio=0.6,label=below left:] (14) at (-2,-1) {$v_{14}$};
\node[draw,star,star points=9,star point ratio=0.6,label=below left:] (15) at (-3,-1) {$v_{15}$};
\node[draw,star,star points=9,star point ratio=0.6,label=below left:] (16) at (-4,-1) {$v_{16}$};
\node[draw,star,star points=9,star point ratio=0.6,label=below left:] (17) at (-3,0) {$v_{17}$};
\node[draw,star,star points=9,star point ratio=0.6star,star points=9,star point ratio=0.6,label=below left:] (18) at (-3,1) {$v_{18}$};
\node at (0,2.6) {$\mathcal{G}_S(G)$};
\node at (0,-2.7) {$\mathcal{G}_N(G)$};
\draw[dashed] (-0.3,2.3) -- (0.3,2.3);
\draw[dashed] (-0.3,2.3) -- (-0.3,1.3);
\draw[dashed] (0.3,2.3) -- (0.3,1.3);
\draw[dashed] (-0.3,1.3) --(-1.3,1.3);
\draw[dashed] (-1.3,1.3) --(-1.3,0.6);
\draw[dashed] (-1.3,0.6) -- (1.3,0.6);
\draw[dashed] (1.3,1.3) --(0.3,1.3);
\draw[dashed] (1.3,1.3) -- (1.3,0.6);
\draw[dashed] (-0.7,-2.4) --(0.7,-2.4);
\draw[dashed] (0.7,-2.4) --(1.2,-1.4);
\draw[dashed] (1.2,-1.4) --(4.4,-1.4);
\draw[dashed] (4.4,-1.4) --(4.4,-0.62);
\draw[dashed] (4.4,-0.62) --(1.5,-0.62);
\draw[dashed] (1.5,0.4) --(1.5,-0.62);
\draw[dashed] (-0.2,0.4) --(1.5,0.4);
\draw[dashed] (-0.7,-0.62) --(-0.2,0.4);
\draw[dashed] (-0.7,-0.62) --(-2.6,-0.62);
\draw[dashed] (-2.6,-0.62) --(-2.6,1.4);
\draw[dashed] (-2.6,1.4) --(-3.4,1.4);
\draw[dashed] (-3.4,1.4) --(-3.4,-0.62);
\draw[dashed] (-3.4,-0.62) --(-4.4,-0.62);
\draw[dashed] (-4.4,-0.62) --(-4.4,-1.4);
\draw[dashed] (-4.4,-1.4) --(-1.2,-1.4);
\draw[dashed] (-1.2,-1.4) --(-0.7,-2.4);
\foreach \from/\to in {1/2,1/3,3/4,4/5,5/2,1/6,6/9,6/7,6/8,1/10,3/11,12/11,12/13,17/15,14/15,2/14,15/16,17/18,5/4,3/11,12/13} {
 \draw (\from) -- (\to);}
 \foreach \source / \dest in {}
   \path[matched edge] (\source) -- (\dest);
\end{tikzpicture}
\end{center}
\end{scriptsize}
 \end{center}
 \begin{center}
\caption{{\small Null decomposition of the unicyclic graph $G$.}}\label{figura2}
\end{center}
\end{figure}
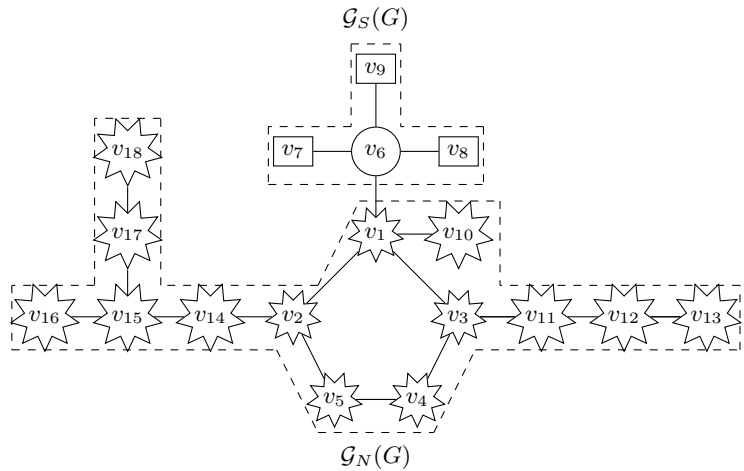
The unicyclic graph $G$ in Figure \ref{figura2} has $Supp(G)=\lbrace{v_7,v_8,v_9}\rbrace$ and $Core(G)=\lbrace{v_6}\rbrace$. Then, the $S$-graph of $G$ generated by the closed neighbourhood of the support consists of

\begin{eqnarray*}
\mathcal{G}_S(G)&=&G\langle{N[Supp(G)]}\rangle=G\langle{N[\lbrace{v_7,v_8,v_{9}}\rbrace]}\rangle=G\langle{\lbrace{v_6,v_7,v_8,v_{9}}\rbrace}\rangle.
\end{eqnarray*}
The $N$-graph of $G$ is given by
\begin{eqnarray*}
\mathcal{G}_N(G)=G-\mathcal{G}_S(G)=G\langle{\lbrace{v_1,v_2,v_3,v_4,v_5,v_{10},v_{11},v_{12},v_{13},v_{14},v_{15},v_{16},v_{17},v_{18}}\rbrace}\rangle.
\end{eqnarray*}
And, the $N$-vertices set of $G$ is $$V(\mathcal{G}_N(G))=\lbrace{v_1,v_2,v_3,v_4,v_5,v_{10},v_{11},v_{12},v_{13},v_{14},v_{15},v_{16},v_{17},v_{18}}\rbrace.$$

\begin{remark}
Let $G$ be a unicyclic graph of Type $I$, $G\lbrace{v}\rbrace$ its pendant tree such that $v\notin{Supp(G\lbrace{v}\rbrace)}$ and $u,w\in{N(v)\cap{V(G-G\lbrace{v}\rbrace)}}$. Note that $G$ necessarily satisfies the hypotheses of one of the propositions \ref{ti1}, \ref{ti4}, \ref{ti6} or \ref{ti3}. In fact, we can divide the set of unicyclic graphs of Type $ I$ into four disjoint classes according to the items below.

\begin{itemize}
\item[(1)]$\forall{x}\in\mathcal{N}(G-G\lbrace{v}\rbrace)$ we have that $x_u=x_w=0$, that is, $u,w\notin{Supp(G-G\lbrace{v}\rbrace)}.$ (Hypotheses of Proposition \ref{ti1})
\item[(2)] $\forall{x}\in\mathcal{N}(G-G\lbrace{v}\rbrace)$ we have that $x_u+x_w=0$, and there is $y\in\mathcal{N}(G-G\lbrace{v}\rbrace)$ such that $y_u\neq{0}$  and $v\in{Core(G\lbrace{v}\rbrace)}$ (Hypotheses of Proposition \ref{ti4})
\item[(3)]$\forall{x}\in\mathcal{N}(G-G\lbrace{v}\rbrace)$ we have that $x_u+x_w=0$, and there is $y\in\mathcal{N}(G-G\lbrace{v}\rbrace)$ such that  $y_u\neq{0}$ and $v\in{V(\mathcal{G}_N(G\lbrace{v}\rbrace))}$ (Hypotheses of Proposition \ref{ti6})
\item[(4)] There is $x\in\mathcal{N}(G-G\lbrace{v}\rbrace)\mbox{ such that }x_u+x_w\neq{0}$ (Hypotheses of Proposition \ref{ti3})
\end{itemize}
\end{remark}

In the next propositions we compute the support, core and $N$-vertices of unicyclic graphs of Type $I$ using its subtrees.

\begin{prop}\label{ti1}
Let $G$ be a unicyclic graph of Type $I$, $G\lbrace{v}\rbrace$ its pendant tree such that $v\notin{Supp(G\lbrace{v}\rbrace)}$ and $u,w\in{N(v)\cap{V(G-G\lbrace{v}\rbrace)}}$. If $u,w\notin{Supp(G-G\lbrace{v}\rbrace)}$, then

\begin{enumerate}[label=(\roman*)]
\item $Supp(G)=Supp(G\lbrace{v}\rbrace)\bigcup{Supp(G-G\lbrace{v}\rbrace)}$;
\item $Core(G)=Core(G\lbrace{v}\rbrace)\bigcup{Core(G-G\lbrace{v}\rbrace)}$;
\item $V(\mathcal{G}_N(G))=V(\mathcal{G}_N(G\lbrace{v}\rbrace))\bigcup{V(\mathcal{G}_N(G-G\lbrace{v}\rbrace)}$;
\item $Supp(G)$ is an independent set of $G$;
\item $Supp(G)\cap{Core(G)}=\emptyset.$
\end{enumerate}
\end{prop}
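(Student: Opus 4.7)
\textbf{Proof plan for Proposition \ref{ti1}.}

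The starting point is to activate Theorem \ref{teo11}. The hypothesis $u,w\notin Supp(G-G\lbrace v\rbrace)$ means that every $x\in\mathcal{N}(G-G\lbrace v\rbrace)$ satisfies $x_u=x_w=0$, in particular $x_u+x_w=0$, so Theorem \ref{teo11} yields the direct sum decomposition
\[
\mathcal{N}(G)=\mathcal{N}(G\lbrace v\rbrace)\upharpoonright^{G}_{G\lbrace v\rbrace}\;\bigoplus\;\mathcal{N}(G-G\lbrace v\rbrace)\upharpoonright^{G}_{G-G\lbrace v\rbrace}.
\]
This is the engine that drives all five items. I will also use that $G\lbrace v\rbrace$ is a tree by definition, and that $G-G\lbrace v\rbrace$ is a tree since removing $v$ from the unique cycle $C$ breaks it.

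For \emph{(i)}, pick a basis of each summand in the decomposition above; their union (after extension) is a basis of $\mathcal{N}(G)$. The support of each extended vector equals the support of the original one, viewed inside $V(G)$, so Lemma \ref{suppcompute} applied to all three graphs gives $Supp(G)=Supp(G\lbrace v\rbrace)\cup Supp(G-G\lbrace v\rbrace)$. For \emph{(ii)}, I would compute $Core(G)=\bigcup_{s\in Supp(G)}N_G(s)$ by splitting over the two pieces from \emph{(i)}. The key observation is that the only edges of $G$ with one endpoint in $V(G\lbrace v\rbrace)$ and one in $V(G-G\lbrace v\rbrace)$ are $\lbrace v,u\rbrace$ and $\lbrace v,w\rbrace$; since $v\notin Supp(G\lbrace v\rbrace)$ and $u,w\notin Supp(G-G\lbrace v\rbrace)$ by hypothesis, every support vertex has all its $G$-neighbors inside its own piece. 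So $N_G(s)=N_{G\lbrace v\rbrace}(s)$ when $s\in Supp(G\lbrace v\rbrace)$, and $N_G(s)=N_{G-G\lbrace v\rbrace}(s)$ when $s\in Supp(G-G\lbrace v\rbrace)$, giving the desired decomposition of the core.

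For \emph{(iii)}, use $V(\mathcal{G}_N(G))=V(G)\setminus N_G[Supp(G)]$ together with \emph{(i)} and \emph{(ii)}: since $V(G)=V(G\lbrace v\rbrace)\sqcup V(G-G\lbrace v\rbrace)$ and each closed neighborhood of the support lies entirely in its own piece, the complement splits into $V(\mathcal{G}_N(G\lbrace v\rbrace))\cup V(\mathcal{G}_N(G-G\lbrace v\rbrace))$. For \emph{(iv)}, two support vertices that both lie in the same piece are non-adjacent in $G$ by Lemma \ref{tindependent} applied to the tree $G\lbrace v\rbrace$ or the tree $G-G\lbrace v\rbrace$; two support vertices coming from different pieces are non-adjacent because, as noted above, the only cross edges use $v$, $u$, or $w$, and none of these vertices are in their respective supports. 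Finally \emph{(v)} is immediate: if $s\in Supp(G)\cap Core(G)$, then $s\in N_G(s')$ for some $s'\in Supp(G)$, producing an edge between two support vertices, contradicting \emph{(iv)}.

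The main conceptual point to get right is the careful bookkeeping for \emph{(ii)}: one must verify that no support vertex on either side is incident to a cross edge, which is exactly where the hypotheses $v\notin Supp(G\lbrace v\rbrace)$ and $u,w\notin Supp(G-G\lbrace v\rbrace)$ are essential. Once this is in hand, items \emph{(iii)}–\emph{(v)} are essentially formal consequences of \emph{(i)} and \emph{(ii)} together with the tree structure of the two pieces.
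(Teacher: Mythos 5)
Your proposal is correct and follows essentially the same route as the paper: both invoke Theorem \ref{teo11} (via the observation that $u,w\notin Supp(G-G\lbrace{v}\rbrace)$ forces $x_u+x_w=0$ for all $x\in\mathcal{N}(G-G\lbrace{v}\rbrace)$) to get item (i), then use the fact that the only cross edges are $\lbrace v,u\rbrace$ and $\lbrace v,w\rbrace$ with none of $v,u,w$ in the relevant supports to deduce (ii)--(v), with Lemma \ref{tindependent} handling independence within each piece. Your explicit bookkeeping of cross edges in (ii) and (iv) matches the paper's argument exactly.
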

\begin{proof}
\begin{enumerate}[label=(\roman*)]
\item As $u,w\notin{Supp(G-G\lbrace{v}\rbrace)}$, then $x_u=x_w=0$ for all $x\in\mathcal{N}(G-G\lbrace{v}\rbrace)$. It means that  $x_u+x_w=0$. Analyzing the entries of the vectors of the basis of the $\mathcal{N}(G)$ in  Theorem \ref{teo11} we obtain the result.
\item Given a vertex $p\in{Core(G)}$, then there is a vertex $p_1\in{Supp(G)}$ such that $p\in{N(p_1)}$. Notice that, the only adjacencies between $G\lbrace{v}\rbrace$ and $G-G\lbrace{v}\rbrace$ occur between vertices $v$ and $u$ and $v$ and $w$, that is, ${V(G-G\lbrace{v}\rbrace)\cap{N(V(G\lbrace{v}\rbrace))}}=\lbrace{u,w}\rbrace$ and ${V(G\lbrace{v}\rbrace)}\cap{N(V(G-G\lbrace{v}\rbrace))}=\lbrace{v}\rbrace$. Thus, $N(V(G\lbrace{v}\rbrace)-\lbrace{v}\rbrace)\cap{V(G-G\lbrace{v}\rbrace)}=\emptyset$ and  $N(V(G-G\lbrace{v}\rbrace)-\lbrace{u,w}\rbrace)\cap{V(G\lbrace{v}\rbrace)}=\emptyset$. Since $v\notin{Supp(G\lbrace{v}\rbrace)}$ and $u,w\notin{Supp(G-G\lbrace{v}\rbrace)}$, then $Supp(G\lbrace{v}\rbrace)\subseteq{V(G\lbrace{v}\rbrace)-\lbrace{v}\rbrace}$ and  $Supp(G-G\lbrace{v}\rbrace)\subseteq{V(G-G\lbrace{v}\rbrace)-\lbrace{u,w}\rbrace}$. Therefore,  $N(Supp(G\lbrace{v}\rbrace))\cap{V(G-G\lbrace{v}\rbrace)}=\emptyset$ and $N(Supp(G-G\lbrace{v}\rbrace))\cap{V(G\lbrace{v}\rbrace)}=\emptyset$. 
Moreover, by item (i) we have that $Supp(G)=Supp(G\lbrace{v}\rbrace)\bigcup{Supp(G-G\lbrace{v}\rbrace)}$, thus $p_1\in{Supp(G\lbrace{v}\rbrace)}$ or $p_1\in{Supp(G-G\lbrace{v}\rbrace)}$. Hence, if $p_1\in{Supp(G\lbrace{v}\rbrace)}$, then $p\in{V(G\lbrace{v}\rbrace)}$ or if $p_1\in{Supp(G-G\lbrace{v}\rbrace)}$, then $p\in{V(G-G\lbrace{v}\rbrace)}$.
We conclude that $p\in{Core(G\lbrace{v}\rbrace)}$ or $p\in{Core(G-G\lbrace{v}\rbrace)}$.

Now, given a vertex $p\in{Core(G\lbrace{v}\rbrace)\bigcup{Core(G-G\lbrace{v}\rbrace)}}$. That is, there is a $p_1\in{Supp(G\lbrace{v}\rbrace)}$ such that $p\in{N(p_1)}$ or there is a $p_2\in{Supp(G-G\lbrace{v}\rbrace)}$ such that $p\in{N(p_2)}$. By item (i) we have that $Supp(G)=Supp(G\lbrace{v}\rbrace)\bigcup{Supp(G-}$ ${G\lbrace{v}\rbrace)}$, then $p_1$ or $p_2\in{Supp(G)}$. Therefore, $p\in{Core(G)}$.

\item Just use the items (i) and (ii).
\item By item (i) we have that $Supp(G)=Supp(G\lbrace{v}\rbrace)\bigcup{Supp(G-G\lbrace{v}\rbrace)}$. Note that $N(Supp(G\lbrace{v}\rbrace))\cap{V(G-G\lbrace{v}\rbrace)}=\emptyset$ and it implies that $N(Supp(G\lbrace{v}\rbrace))\cap{Supp(G-G\lbrace{v}\rbrace)}=\emptyset$. Moreover, by Lemma \ref{tindependent} we have that $Supp(G\lbrace{v}\rbrace)$ and ${Supp(G-G\lbrace{v}\rbrace)}$ are independent sets. Therefore, $Supp(G)$ is an independent set.

\item Just use item (iv).
\end{enumerate}
\end{proof}

\begin{prop}\label{ti4}
Let $G$ be a unicyclic graph of Type $I$, $G\lbrace{v}\rbrace$ its pendant tree such that $v\notin{Supp(G\lbrace{v}\rbrace)}$ and $u,w\in{N(v)\cap{V(G-G\lbrace{v}\rbrace)}}$. If $v\in{Core(G\lbrace{v}\rbrace)}$, for all $y\in\mathcal{N}(G-G\lbrace{v}\rbrace)$ we have that $y_u+y_w=0$ and there is an $x\in\mathcal{N}(G-G\lbrace{v}\rbrace)$ such that $x_u\neq{0}$, then
\begin{enumerate}[label=(\roman*)]
\item $Supp(G)=Supp(G\lbrace{v}\rbrace)\bigcup{Supp(G-G\lbrace{v}\rbrace)}$;
\item $Core(G)=Core(G\lbrace{v}\rbrace)\bigcup{Core(G-G\lbrace{v}\rbrace)}$;
\item $V(\mathcal{G}_N(G))=V(\mathcal{G}_N(G\lbrace{v}\rbrace))\bigcup{V(\mathcal{G}_N(G-G\lbrace{v}\rbrace))}$;
\item $Supp(G)$ is an independent set of $G$;
\item $Supp(G)\cap{Core(G)}=\emptyset.$
\end{enumerate}
\end{prop}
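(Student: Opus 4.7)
The plan is to reduce everything to the direct-sum decomposition of $\mathcal{N}(G)$ coming from Theorem \ref{teo11}. Under the stated hypotheses one has $y_u + y_w = 0$ for every $y \in \mathcal{N}(G-G\{v\})$, so Theorem \ref{teo11} applies and yields
$$\mathcal{N}(G) = \mathcal{N}(G\{v\})\upharpoonright^{G}_{G\{v\}} \oplus \mathcal{N}(G-G\{v\})\upharpoonright^{G}_{G-G\{v\}}.$$
Concatenating bases of the two summands gives a basis of $\mathcal{N}(G)$; since extending a vector by zeros preserves its set of nonzero coordinates, Lemma \ref{suppcompute} (applied once to $G$ and once to each summand) forces $Supp(G) = Supp(G\{v\}) \cup Supp(G-G\{v\})$. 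This is item (i).

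For item (ii), I would exploit that the only edges of $G$ joining $V(G\{v\})$ to $V(G-G\{v\})$ are the two cycle edges $\{u,v\}$ and $\{w,v\}$. For any $p \in Supp(G\{v\})$ the hypothesis $v \notin Supp(G\{v\})$ gives $p \neq v$, so $N_G(p) = N_{G\{v\}}(p)$. Dually, for $p \in Supp(G-G\{v\})$ one has $N_G(p) = N_{G-G\{v\}}(p)$ unless $p \in \{u,w\}$, in which case the extra vertex $v$ is picked up. The hypothesis that some $x \in \mathcal{N}(G-G\{v\})$ has $x_u \neq 0$, combined with $x_u + x_w = 0$, forces both $u$ and $w$ into $Supp(G-G\{v\})$, so $v$ does appear in $\bigcup_{p \in Supp(G)} N_G(p)$. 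The hypothesis $v \in Core(G\{v\})$ is exactly what guarantees that this occurrence of $v$ is already present in $Core(G\{v\})$, so no ``new'' vertex is created. This is the one genuinely delicate point of the argument, and it is the reason $v \in Core(G\{v\})$ is assumed.

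Items (iii), (iv) and (v) should then follow cleanly. For (iii), since $V(G) = V(G\{v\}) \sqcup V(G-G\{v\})$ is a disjoint partition and $Supp(G) \cup Core(G)$ splits according to (i) and (ii), taking complements within $V(G)$ gives the desired decomposition of $V(\mathcal{G}_N(G))$. For (iv), observe that removing $V(G\{v\})$ from $G$ breaks the unique cycle (since $v$ lies on it), so $G-G\{v\}$ is a tree; hence Lemma \ref{tindependent} makes both $Supp(G\{v\})$ and $Supp(G-G\{v\})$ independent in their respective trees, and the only candidate cross-edges between them would have to pass through $v$, which is excluded because $v \notin Supp(G)$. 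Finally (v) is immediate from (iv): any $p \in Supp(G) \cap Core(G)$ would be a supported vertex adjacent to another supported vertex, violating independence.

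The main obstacle is precisely the bookkeeping in (ii): making sure the vertex $v$, which becomes adjacent to the newly supported vertices $u$ and $w$ when one passes from $G-G\{v\}$ to $G$, is absorbed into $Core(G\{v\})$ rather than producing a vertex outside $Core(G\{v\}) \cup Core(G-G\{v\})$. Once this is handled, items (i)--(v) are essentially a direct unpacking of Theorem \ref{teo11}, Lemma \ref{suppcompute} and Lemma \ref{tindependent}.
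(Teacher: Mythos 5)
Your proposal is correct and follows essentially the same route as the paper: item (i) via the direct-sum decomposition of Theorem \ref{teo11} and Lemma \ref{suppcompute}, item (ii) by tracking the two cross-edges $\{u,v\}$, $\{w,v\}$ and using that $x_u\neq 0$ together with $x_u+x_w=0$ places both $u$ and $w$ in $Supp(G-G\{v\})$ while $v\in Core(G\{v\})$ absorbs the resulting occurrence of $v$, and items (iii)--(v) by complementation, Lemma \ref{tindependent}, and independence, exactly as in the paper's proofs of Propositions \ref{ti4} and \ref{ti1}. No gaps.
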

\begin{proof}
\begin{enumerate}[label=(\roman*)]
\item We just use the Theorem \ref{teo11}.
\item Given a vertex $p\in{Core(G)}$. Note that if $p=v$, then $p\in{Core(G\lbrace{v}\rbrace)}$, because $v\in{Core(G\lbrace{v}\rbrace)}$. Assume that $p\in{Core(G)}\setminus\lbrace{v}\rbrace$, then there is a vertex $p_1\in{Supp(G)}$ such that $p\in{N(p_1)}$. Note that ${V(G-G\lbrace{v}\rbrace)\cap{N(V(G\lbrace{v}\rbrace))}}=\lbrace{u,w}\rbrace$ and ${V(G\lbrace{v}\rbrace)\cap{N(V(G-G\lbrace{v}\rbrace))}}=\lbrace{v}\rbrace$.
Since $u,w\in{Supp(G-G\lbrace{v}\rbrace)}$ (because $x_u\neq{0}$ and $x_w\neq{0}$) and $v\notin{Supp(G\lbrace{v}\rbrace)}$ we have that $N(Supp(G\lbrace{v}\rbrace))\cap{V(G-G\lbrace{v}\rbrace)}=\emptyset$ and $N(Supp(G-G\lbrace{v}\rbrace))\cap{V(G\lbrace{v}\rbrace)}=\lbrace{v}\rbrace$.
By item (i) we have that $Supp(G)=Supp(G\lbrace{v}\rbrace)\bigcup{Supp(G-G\lbrace{v}\rbrace)}$, then  $p_1\in{Supp(G\lbrace{v}\rbrace)}$ or $p_1\in{Supp(G-G\lbrace{v}\rbrace)}$. Hence, if $p_1\in{Supp(G\lbrace{v}\rbrace)}$, then $p\in{V(G\lbrace{v}\rbrace)}$ or if $p_1\in{Supp(G-G\lbrace{v}\rbrace)}$, then $p\in{V(G-G\lbrace{v}\rbrace)}$.
We conclude that $p\in{Core(G\lbrace{v}\rbrace)}$ or $p\in{Core(G-G\lbrace{v}\rbrace)}$.

\noindent The inclusion $\supseteq$ is analogous to \ref{ti1} (ii).

\item Just use the items (i) and (ii).
\item Same argument used in Proposition \ref{ti1} (iv).
\item Argument equal to the used in Proposition \ref{ti1} (v).
\end{enumerate}
\end{proof}

\begin{prop}\label{ti6}
Let $G$ be a unicyclic graph of Type $I$, $G\lbrace{v}\rbrace$ its pendant tree such that $v\notin{Supp(G\lbrace{v}\rbrace)}$ and $u,w\in{N(v)\cap{V(G-G\lbrace{v}\rbrace)}}$. If $v\in{V(\mathcal{G}_N(G\lbrace{v}\rbrace)}$, for all $y\in\mathcal{N}(G-G\lbrace{v}\rbrace)$ we have $y_u+y_w=0$ and there is  $x\in\mathcal{N}(G-G\lbrace{v}\rbrace)$ such that $x_u\neq{0}$, then
\begin{enumerate}[label=(\roman*)]
\item $Supp(G)=Supp(G\lbrace{v}\rbrace)\bigcup{Supp(G-G\lbrace{v}\rbrace)}$;
\item $Core(G)=Core(G\lbrace{v}\rbrace)\bigcup{Core(G-G\lbrace{v}\rbrace)}\bigcup{\lbrace{v}\rbrace}$;
\item $V(\mathcal{G}_N(G))=\left(V(\mathcal{G}_N(G\lbrace{v}\rbrace))\setminus{\lbrace{v}\rbrace}\right)\bigcup{V(\mathcal{G}_N(G-G\lbrace{v}\rbrace))}$;
\item $Supp(G)$ is an independent set of $G$;
\item $Supp(G)\cap{Core(G)}=\emptyset.$
\end{enumerate}
\end{prop}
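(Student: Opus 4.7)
The plan is to mirror the proofs of Propositions \ref{ti1} and \ref{ti4}, with the single delicate point being that the hypothesis $v\in V(\mathcal{G}_N(G\{v\}))$ is incompatible with $v\in Core(G\{v\})$, so the vertex $v$ must be \emph{added by hand} in (ii) and correspondingly \emph{removed} in (iii). Observe first that both $u$ and $w$ lie in $Supp(G-G\{v\})$: the existence of $x\in\mathcal{N}(G-G\{v\})$ with $x_u\neq 0$, combined with the identity $x_u+x_w=0$, forces $x_w=-x_u\neq 0$.

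For (i) I would invoke Theorem \ref{teo11}, which under the hypothesis $y_u+y_w=0$ for every $y\in\mathcal{N}(G-G\{v\})$ yields the direct sum $\mathcal{N}(G)=\mathcal{N}(G\{v\})\upharpoonright^{G}_{G\{v\}}\oplus \mathcal{N}(G-G\{v\})\upharpoonright^{G}_{G-G\{v\}}$. Since the extension operator preserves nonzero coordinates, Lemma \ref{suppcompute} then gives $Supp(G)=Supp(G\{v\})\cup Supp(G-G\{v\})$.

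For (ii) the inclusion $\supseteq$ splits into three pieces: $Core(G\{v\})\subseteq Core(G)$ and $Core(G-G\{v\})\subseteq Core(G)$ are straightforward from (i) together with the fact that the only edges joining the two subgraphs are $vu$ and $vw$; and $v\in Core(G)$ because $u\in Supp(G-G\{v\})\subseteq Supp(G)$ is a neighbor of $v$. For $\subseteq$, I pick $p\in Core(G)$ with a neighbor $p_1\in Supp(G)$ and use (i) to locate $p_1$. The bridging edges give $N(Supp(G\{v\}))\subseteq V(G\{v\})$ (since $v\notin Supp(G\{v\})$) and $N(Supp(G-G\{v\}))\cap V(G\{v\})\subseteq\{v\}$. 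The three resulting subcases ($p_1\in Supp(G\{v\})$; $p_1\in Supp(G-G\{v\})\setminus\{u,w\}$; $p_1\in\{u,w\}$ with $p=v$) place $p$ in $Core(G\{v\})$, in $Core(G-G\{v\})$, or at $v$, respectively.

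Item (iii) should then follow from (i) and (ii) via $V(\mathcal{G}_N(G))=V(G)\setminus\bigl(Supp(G)\cup Core(G)\bigr)$ and the disjoint partition $V(G)=V(G\{v\})\cup V(G-G\{v\})$: the extra $\{v\}$ contributed to $Core(G)$ trims $v$ from the $G\{v\}$ side, giving $V(\mathcal{G}_N(G\{v\}))\setminus\{v\}$ there and $V(\mathcal{G}_N(G-G\{v\}))$ on the other side. For (iv), by (i) and Lemma \ref{tindependent} the two pieces of $Supp(G)$ are independent in their own subgraphs, and the only cross-edges $vu,vw$ cannot join two support vertices because $v\notin Supp(G\{v\})$ while $u,w\notin V(G\{v\})$. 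Finally (v) is immediate from (iv): a vertex in $Supp(G)\cap Core(G)$ would be a support vertex adjacent to another support vertex. I expect the main obstacle to be the case analysis in (ii), where one must carefully track which side of the edges $vu$ and $vw$ a candidate core vertex sits on and how it inherits its core status from the support decomposition in (i).
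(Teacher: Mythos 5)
Your proposal is correct and follows essentially the same route as the paper: item (i) via Theorem \ref{teo11}, item (ii) by the same case analysis using that $u,w\in Supp(G-G\lbrace{v}\rbrace)$ (forced by $x_u\neq 0$ and $x_u+x_w=0$) while $v\notin Supp(G\lbrace{v}\rbrace)$, so that $v$ is adjoined to the core by hand, and (iii)--(v) as formal consequences exactly as in Proposition \ref{ti1}. The only nit is that in the forward inclusion of (ii) your three subcases as listed omit $p_1\in\lbrace{u,w}\rbrace$ with $p\neq v$, which lands $p$ in $Core(G-G\lbrace{v}\rbrace)$ anyway; the paper sidesteps this by first disposing of $p=v$ and only then splitting on the location of $p_1$.
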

\begin{proof}
\begin{enumerate}[label=(\roman*)]
\item Same argument used in Proposition \ref{ti4} (i).
\item Given a vertex $p\in{Core(G)}$. Note that if $p=v$, then $p\in{\lbrace{v}\rbrace}.$ Assume $p\in{Core(G)}\setminus\lbrace{v}\rbrace$, then there is a vertex $p_1\in{Supp(G)}$ such that $p\in{N(p_1)}$. Note that ${N(V(G\lbrace{v}\rbrace))\cap{V(G-G\lbrace{v}\rbrace)}}=\lbrace{u,w}\rbrace$ and ${N(V(G-G\lbrace{v}\rbrace))\cap{V(G\lbrace{v}\rbrace)}}=\lbrace{v}\rbrace$. Since $u,w\in{Supp(G-G\lbrace{v}\rbrace)}$ (because $x_u\neq{0}$ and $x_w\neq{0}$) and $v\notin{Supp(G\lbrace{v}\rbrace)}$ we have that $N(Supp(G\lbrace{v}\rbrace))\cap{V(G-}$ ${G\lbrace{v}\rbrace)}=\emptyset$ and $N(Supp(G-G\lbrace{v}\rbrace))\cap{V(G\lbrace{v}\rbrace)}=\lbrace{v}\rbrace$. By item (i) $Supp(G)=Supp(G\lbrace{v}\rbrace)\bigcup{Supp(G-}$ ${G\lbrace{v}\rbrace)}$, then $p_1\in{Supp(G\lbrace{v}\rbrace)\bigcup{Supp(G-G\lbrace{v}\rbrace)}}$. Hence, if $p_1\in{Supp(G\lbrace{v}\rbrace)}$, then $p\in{V(G\lbrace{v}\rbrace)}$ or if $p_1\in{Supp(G-G\lbrace{v}\rbrace)}$, then $p\in{V(G-G\lbrace{v}\rbrace)}$, that is, $p\in{Core(G\lbrace{v}\rbrace)}$ or $p\in{Core(G-G\lbrace{v}\rbrace)}$.

Now, given a vertex $p\in{Core(G\lbrace{v}\rbrace)\bigcup{Core(G-G\lbrace{v}\rbrace)}}\bigcup{\lbrace{v}\rbrace}$. Notice that, using (i) we have that $u,w\in{Supp(G)}$, because $u,w\in{Supp(G-G\lbrace{v}\rbrace)}$.
Thus, if $p=v$, then we have that $p\in{Core(G)}$ because $p\in{N(u)}$ and $u\in{Supp(G)}$.  If $p\in{Core(G\lbrace{v}\rbrace)}$, then there exists $r\in{N(p)}$ such that $r\in{Supp(G\lbrace{v}\rbrace)}$. Using the item (i) we can conclude that $r\in{Supp(G)}$. Hence $p\in{Core(G)}$. If $p\in{Core(G-G\lbrace{v}\rbrace)}$, then there exists $r\in{N(p)}$ such that $r\in{Supp(G-G\lbrace{v}\rbrace)}$. Using the item (i) we have that $r\in{Supp(G)}$. Therefore, $p\in{Core(G)}$.

\item Just use the items (i) and (ii).
\item Same argument used in Proposition \ref{ti1} (iv).
\item Argument equal to the used in Proposition \ref{ti1} (v).
\end{enumerate}
\end{proof}

\begin{lemat}\label{lemagv}
Let $G$ be a unicyclic graph of Type $I$ and $G\lbrace{v}\rbrace$ its pendant tree such that $v\notin{Supp(G\lbrace{v}\rbrace)}$. Then $Supp(G\lbrace{v}\rbrace)\subseteq{Supp(G\lbrace{v}\rbrace-v)}.$
\end{lemat}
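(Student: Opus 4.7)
The plan is to exploit the fact that $v \notin Supp(G\{v\})$ forces every null vector of $T := G\{v\}$ to vanish at $v$, which lets us restrict null vectors from $T$ to $T - v$ without losing any support information.

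First I would observe, directly from the definition of the support, that $v \notin Supp(T)$ is equivalent to saying $x_v = 0$ for every $x \in \mathcal{N}(T)$. Next I would define the restriction map $\rho \colon \mathcal{N}(T) \to \mathbb{R}^{V(T)\setminus\{v\}}$ that simply drops the $v$-coordinate, and show that $\rho(x) \in \mathcal{N}(T-v)$ whenever $x \in \mathcal{N}(T)$. For this, I would pick any $u \in V(T)\setminus\{v\}$ and compute
\begin{equation*}
(A(T-v)\,\rho(x))_u \;=\; \sum_{w \in N_{T-v}(u)} x_w
\;=\; \sum_{w \in N_T(u)} x_w \;-\; [v \in N_T(u)]\cdot x_v
\;=\; (A(T)x)_u - 0 \;=\; 0,
\end{equation*}
where we used $x_v = 0$ to absorb the possibly missing neighbor and $A(T)x = 0$.

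Finally, to conclude the inclusion, I would take any $u \in Supp(T)$. By definition there is some $x \in \mathcal{N}(T)$ with $x_u \neq 0$; note $u \neq v$ because $v \notin Supp(T)$. The restricted vector $\rho(x)$ lies in $\mathcal{N}(T-v)$ by the previous step, and its $u$-coordinate equals $x_u \neq 0$. Hence $u \in Supp(T-v)$, proving $Supp(G\{v\}) \subseteq Supp(G\{v\}-v)$.

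There is no real obstacle here: the only place where something has to be checked is the restriction step, and the computation is immediate once one notices that dropping $v$ from the neighborhood of any $u$ costs nothing because $x_v = 0$. No appeal to the Type $I$ hypothesis or to Lemma \ref{t4} is needed beyond ensuring that $G\{v\}$ is a well-defined pendant tree with $v \notin Supp(G\{v\})$.
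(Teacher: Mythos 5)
Your proposal is correct and follows essentially the same route as the paper's proof: both arguments restrict a null vector of $G\lbrace{v}\rbrace$ to $G\lbrace{v}\rbrace-v$ and use the fact that $x_v=0$ (since $v\notin Supp(G\lbrace{v}\rbrace)$) to verify that the restriction remains a null vector, so the nonzero coordinate certifying membership in the support survives. The only difference is presentational: the paper phrases the verification via a $2\times 2$ block decomposition of $A(G\lbrace{v}\rbrace)$, while you carry out the equivalent coordinate-wise computation.
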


\begin{proof}
Given $j\in{Supp(G\lbrace{v}\rbrace)}$. It means that there is a vector $x\in\mathcal{N}(G\lbrace{v}\rbrace)$ such that $x_j\neq{0}$.
The adjacency matrix of $G$ can have the following format
\begin{eqnarray*}
A(G\lbrace{v}\rbrace)&=&\begin{blockarray}{cccc}
         & V(G\lbrace{v}\rbrace-v)    & v  \\
\begin{block}{r[ccc]}
V(G\lbrace{v}\rbrace-v) & A(G\lbrace{v}\rbrace-v)  &  \mathbf{a}\\
 v   &   \mathbf{a}^{t}                  &   0    \\
\end{blockarray}.\\
\end{eqnarray*}
Where the submatrix $\mathbf{a}$ has almost all null entries, except for the entries corresponding to the adjacency between $v$ and vertices of the $G\lbrace{v}\rbrace-v$. Note that
\begin{eqnarray*}
x=\left[\begin{array}{cccccccc}
    w  \\
   x_v\\
\end{array}\right],
\end{eqnarray*}

\noindent where $w=\left[\begin{array}{cccccccc}
   x_1 & x_2 & \cdots & x_j & \cdots & x_{v-1}  \\
\end{array}\right]^{t}$ and $x_v=0$, because $v\notin{Supp(G\lbrace{v}\rbrace)}$. The product

\begin{eqnarray*}
A(G\lbrace{v}\rbrace)x=\left[\begin{array}{cccccccc}
    A(G\lbrace{v}\rbrace-v)w+x_v\mathbf{a}\\
   \mathbf{a}^tw\\
\end{array}\right]=\left[\begin{array}{cccccccc}
    A(G\lbrace{v}\rbrace-v)w\\
   \mathbf{a}^tw\\
\end{array}\right]=\mathbf{0}
\end{eqnarray*}
gives $A(G\lbrace{v}\rbrace-v)w=\mathbf{0}$. And, as $(w)_j=x_j\neq{0}$, we obtain that $j\in{Supp(G\lbrace{v}\rbrace-v)}$.
\end{proof}

\begin{prop}
\label{ti3}
Let $G$ be a unicyclic graph of Type $I$, $G\lbrace{v}\rbrace$ its pendant tree such that $v\notin{Supp(G\lbrace{v}\rbrace)}$ and $u,w\in{N(v)\cap{V(G-G\lbrace{v}\rbrace)}}$. If there is a $x\in\mathcal{N}(G-G\lbrace{v}\rbrace)$ such that $x_u+x_w\neq{0}$, then
\begin{enumerate}[label=(\roman*)]
\item $Supp(G)=Supp(G\lbrace{v}\rbrace-v)\bigcup{Supp(G-G\lbrace{v}\rbrace)}$;
\item $Core(G)=Core(G\lbrace{v}\rbrace-v)\bigcup{Core(G-G\lbrace{v}\rbrace)}\bigcup{\lbrace{v}\rbrace}$;
\item $V(\mathcal{G}_N(G))=V(\mathcal{G}_N(G\lbrace{v}\rbrace-v))\bigcup{V(\mathcal{G}_N(G-G\lbrace{v}\rbrace))}$;
\item $Supp(G)$ is an independent set of $G$;
\item $Supp(G)\cap{Core(G)}=\emptyset.$
\end{enumerate}
\end{prop}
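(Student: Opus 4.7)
The plan is to derive all five items from the explicit basis of $\mathcal{N}(G)$ furnished by Theorem \ref{teobasistipoi}, following the template of Propositions \ref{ti1}, \ref{ti4} and \ref{ti6}. The key structural observation that feeds every step is that the only edges of $G$ joining $V(G\{v\})$ to $V(G-G\{v\})$ are the two cycle edges $uv$ and $vw$, both of which involve $v\notin Supp(G\{v\})$; this will make the supports on the two sides essentially behave independently. Note also that $v\notin Supp(G)$, because $V(G\{v\}-v)\cup V(G-G\{v\})$ does not contain $v$, so once (i) is in place we automatically have $q\neq v$ for any supported vertex $q$.

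For (i), I would apply Lemma \ref{suppcompute} to the basis $\mathcal{B}_1\cup\mathcal{B}_2\cup\mathcal{B}_3$. Each $w_\ell\upharpoonright^{G}_{G\{v\}}\in\mathcal{B}_3$ contributes support inside $Supp(G\{v\})$, which by Lemma \ref{lemagv} is contained in $Supp(G\{v\}-v)$. Each $a_j\upharpoonright^{G}_{G-G\{v\}}\in\mathcal{B}_2$ contributes support inside $Supp(G-G\{v\})$. A vector of $\mathcal{B}_1$ splits as a $c_ib_i$-block on $V(G-G\{v\})$, a zero at $v$, and the $y$-block on $V(G\{v\}-v)$; since $c_i\neq 0$ and $Supp_{G\{v\}-v}(y)=Supp(G\{v\}-v)$ by the choice of $y$, its support equals $Supp_{G-G\{v\}}(b_i)\cup Supp(G\{v\}-v)$. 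Combining the $\mathcal{B}_2$ vectors with the $b_i$-pieces of $\mathcal{B}_1$ recovers the support of the full basis $\{b_1,\ldots,b_p,a_1,\ldots,a_r\}$ of $\mathcal{N}(G-G\{v\})$, which equals $Supp(G-G\{v\})$ by Lemma \ref{suppcompute}, while the repeated $y$-blocks contribute $Supp(G\{v\}-v)$. This yields (i).

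For (ii), first observe that $v\in Core(G)$: Lemma \ref{lematec2} applied to the tree $G\{v\}$ furnishes a neighbor of $v$ in $Supp(G\{v\}-v)$, which lies in $Supp(G)$ by (i). Next, for an arbitrary $p\in Core(G)$ with $p\neq v$ and a supported neighbor $q$, the bridging observation forces $p$ and $q$ onto the same side of the decomposition, so $p$ falls into $Core(G\{v\}-v)$ or $Core(G-G\{v\})$; the reverse inclusion is immediate from (i) (any supported neighbor in a side is still adjacent in $G$). Item (iii) is then the direct subtraction $V(\mathcal{G}_N(G))=V(G)\setminus(Supp(G)\cup Core(G))$ applied to the disjoint union $V(G)=V(G\{v\}-v)\sqcup\{v\}\sqcup V(G-G\{v\})$, with the singleton $\{v\}$ absorbed by $Core(G)$ and the two remaining pieces splitting into their $\mathcal{G}_N$-parts.

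Finally, (iv) follows because $Supp(G\{v\}-v)$ and $Supp(G-G\{v\})$ are independent by Lemma \ref{tindependent} on their ambient forests, and the bridging observation rules out any edge between them (the only candidates $uv$ and $vw$ involve $v\notin Supp(G)$); then (v) is automatic, since any vertex in $Supp(G)\cap Core(G)$ would produce an edge inside the independent set $Supp(G)$. I expect the main obstacle to be the bookkeeping in (i): one must not try to recover $Supp(G-G\{v\})$ from $\mathcal{B}_2$ alone, since the vectors $b_i$ are absent there and resurface only inside $\mathcal{B}_1$; dually, $Supp(G\{v\}-v)$ must be read off from the common $y$-block of $\mathcal{B}_1$, which subsumes the smaller contribution of $\mathcal{B}_3$ via Lemma \ref{lemagv}.
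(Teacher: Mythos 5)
Your proposal is correct and follows essentially the same route as the paper's proof: item (i) comes from reading off the supports of the basis vectors of Theorem \ref{teobasistipoi} together with Lemma \ref{lemagv}, and items (ii)--(v) follow from the observation that the only edges between $V(G\lbrace{v}\rbrace)$ and $V(G-G\lbrace{v}\rbrace)$ pass through the unsupported vertex $v$. The only cosmetic difference is that you place $v$ in $Core(G)$ via Lemma \ref{lematec2} applied to $G\lbrace{v}\rbrace$, whereas the paper uses that $x_u+x_w\neq 0$ forces $u\in Supp(G)$ or $w\in Supp(G)$ and $v\in N(u)\cap N(w)$; both are valid, and your write-up of (i) is in fact more detailed than the paper's one-line justification.
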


\begin{proof}
\begin{enumerate}[label=(\roman*)]
\item As there is a $x\in\mathcal{N}(G-G\lbrace{v}\rbrace)$ such that $x_u+x_w\neq{0}$ and by Lemma \ref{lemagv} $Supp(G\lbrace{v}\rbrace)\subseteq{Supp(G\lbrace{v}\rbrace-v)}$, thus just use the Theorem \ref{teobasistipoi}.
\item Given a vertex $p\in{Core(G)}$. If $p=v$, then $p\in\lbrace{v}\rbrace$. Assume $p\in{Core(G)}\setminus\lbrace{v}\rbrace$. Thus there is a vertex $p_1\in{Supp(G)}$ such that $p\in{N(p_1)}$. Note that, there is no adjacency between $V(G\lbrace{v}\rbrace-v)$ and $V(G-G\lbrace{v}\rbrace)$, that is, $N(V(G\lbrace{v}\rbrace-v))\cap{V(G-G\lbrace{v}\rbrace)}=\emptyset$. Therefore, $N(Supp(G\lbrace{v}\rbrace-v))\cap{V(G-G\lbrace{v}\rbrace)}=\emptyset$. Using the item (i) we can conclude that $p_1\in{Supp(G\lbrace{v}\rbrace-v)}\bigcup{Supp(G-G\lbrace{v}\rbrace)}$.  Hence, if $p_1\in{Supp(G\lbrace{v}\rbrace-v)}$, then $p\in{V(G\lbrace{v}\rbrace-v)}$ or if $p_1\in{Supp(G-}$ ${G\lbrace{v}\rbrace)}$, then $p\in{V(G-G\lbrace{v}\rbrace)}$, that is, $p\in{Core(G\lbrace{v}\rbrace-v)\cup{Core(G-G\lbrace{v}\rbrace)}}$.

Now, given a vertex $p\in{Core(G\lbrace{v}\rbrace-v)\bigcup{Core(G-G\lbrace{v}\rbrace)}}\bigcup{\lbrace{v}\rbrace}$. Note that $x_u+x_w\neq{0}$, then $u\in{Supp(G-G\lbrace{v}\rbrace)}$ or $w\in{Supp(G-G\lbrace{v}\rbrace)}$. Then using the item $(i)$ we can conclude that $u\in{Supp(G)}$ or $w\in{Supp(G)}$. If $p=v$, then $p\in{N(u)\cap{N(w)}}$. Therefore, $p\in{Core(G)}$. Now, if $$p\in{Core(G\lbrace{v}\rbrace-v)}\bigcup{Core(G-G\lbrace{v}\rbrace)},$$
\noindent then there is $p_1\in{Supp(G\lbrace{v}\rbrace-v)}$ such that $p\in{N(p_1)}$ or there is $p_2\in{Supp(G-G\lbrace{v}\rbrace)}$ such that $p\in{N(p_2)}$.  By item (i) we conclude that $p_1\in{Supp(G)}$ or $p_2\in{Supp(G)}$. It implies that $p\in{Core(G)}$.
\item Just use the items (i) and (ii).

\item By item (i) we have that $Supp(G)=Supp(G\lbrace{v}\rbrace-v)\bigcup{Supp(G-G\lbrace{v}\rbrace)}$. Denote $G\lbrace{v}\rbrace-v=\bigcup\limits^{k}_{i=1}{T_i}$, where $T_i$ are the connected components of the forest $G\lbrace{v}\rbrace-v$. Note that $N(Supp(G\lbrace{v}\rbrace-v))\cap{V(G-G\lbrace{v}\rbrace)}=\emptyset$ and it implies that $N(Supp(G\lbrace{v}\rbrace-v))\cap{Supp(G-G\lbrace{v}\rbrace)}=\emptyset$. Moreover, by Lemma \ref{tindependent} we have that $Supp(G\lbrace{v}\rbrace-v)=\bigcup\limits^{k}_{i=1}Supp({T_i})$ and ${Supp(G-G\lbrace{v}\rbrace)}$ are independent sets. Therefore, $Supp(G)$ is an independent set.

\item Argument equal to the used in Proposition \ref{ti1} (v).
\end{enumerate}
\end{proof}

\section{Support, Core and $N$-vertices of Unicyclic Graphs of Type $II$}\label{sec:suptipoII}

In this section, we study the support, core and $N$-vertices of unicyclic graphs of Type $II$. We divide the unicyclic graphs of Type $II$ into two disjoint sets.  Those with a length cycle other than $4t$ (Proposition \ref{ti2}) and those with a length cycle equal to $4t$ (Proposition \ref{ti5}), where $t\in{\mathbb{N}}$.

\begin{prop}\label{ti2}
Let $G$ be a unicyclic graph of Type $II$ and $C$ its cycle. Consider $G-C=\bigcup\limits^{k}_{i=1}{T_i}$, where $T_i$ is a connected component of $G-C$. If $\mid{V(C)}\mid\neq{4t}$ for $t\in{\mathbb{N}}$, then
\begin{itemize}
\item[(i)] $Supp(G)=Supp(G-C)=\bigcup\limits^{k}_{i=1}{Supp(T_i)}$;
\item[(ii)] $Core(G)=Core(G-C)=\bigcup\limits^{k}_{i=1}{Core(T_i)}$;
\item[(iii)] $V(\mathcal{G}_N(G))=V(C)\bigcup\left(\bigcup\limits^{k}_{i=1}{V(\mathcal{G}_N(T_i)})\right)$;
\item[(iv)] $Supp(G)$ is an independent set.
\item[(v)] $Supp(G)\cap{Core(G)}=\emptyset$.
\end{itemize}
\end{prop}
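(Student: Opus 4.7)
The plan is to use Theorem \ref{t40} as the central tool, which identifies $\mathcal{N}(G)$ with the extended null space of the forest $G-C$ whenever the cycle length is not a multiple of $4$. From this identification, statement (i) will follow almost immediately: for a vector $y \in \mathcal{N}(G)$ we have $y = x\upharpoonright^{G}_{G-C}$ for some $x \in \mathcal{N}(G-C)$, and this extension places zero coordinates on $V(C)$, so $Supp_G(y) = Supp_{G-C}(x) \subseteq V(G-C)$. Applying Lemma \ref{suppcompute} (support of an eigenspace equals support of any basis) on both sides gives $Supp(G) = Supp(G-C)$, and since $G-C$ is the disjoint union of the trees $T_i$, one obtains $Supp(G-C) = \bigcup_{i=1}^{k} Supp(T_i)$. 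In particular no cycle vertex lies in $Supp(G)$.

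For (ii), the key extra ingredient is Lemma \ref{non supp} together with Lemma \ref{c3}. Since $G$ is of Type $II$, every $v \in V(C)$ satisfies $v \in Supp(G\lbrace v \rbrace)$, so Lemma \ref{non supp} tells us that for each cycle vertex $v$ and each neighbor $u \in N(v) \cap V(T_i)$, one has $u \notin Supp(G-C) = Supp(G)$. Equivalently, no vertex of $Supp(G)$ is adjacent to a cycle vertex. Therefore $N(Supp(G)) \subseteq V(G-C)$, and since no edges join different components of $G-C$, we get $Core(G) = \bigcup_{v \in Supp(G)} N(v) = \bigcup_{i=1}^{k} Core(T_i) = Core(G-C)$.

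Statement (iii) then drops out by set complement: $V(\mathcal{G}_S(G)) = N[Supp(G)] = Supp(G) \cup Core(G)$ lies entirely in $\bigcup V(T_i)$ by the preceding paragraphs, and inside each $T_i$ it equals $V(\mathcal{G}_S(T_i))$. Hence
\[
V(\mathcal{G}_N(G)) = V(G) \setminus V(\mathcal{G}_S(G)) = V(C) \,\cup\, \bigcup_{i=1}^{k}\bigl(V(T_i) \setminus V(\mathcal{G}_S(T_i))\bigr) = V(C) \cup \bigcup_{i=1}^{k} V(\mathcal{G}_N(T_i)).
\]
For (iv), each $Supp(T_i)$ is independent in $T_i$ by Lemma \ref{tindependent}, distinct trees $T_i, T_j$ share no edges, and by the argument above no edge joins a vertex of $Supp(G)$ to a cycle vertex; so $Supp(G)$ is independent in $G$. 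Finally (v): if some $p$ lay in $Supp(G) \cap Core(G)$, then (by (i) and (ii)) $p \in Supp(T_i) \cap Core(T_i)$ for some $i$, giving a support vertex adjacent to another support vertex inside the tree $T_i$, contradicting Lemma \ref{tindependent}.

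The only real obstacle is step (ii), where one must correctly invoke Lemma \ref{non supp} to rule out adjacencies between $Supp(G)$ and $V(C)$; everything else is bookkeeping on disjoint unions. I expect the proof to be short, consisting essentially of (i) by Theorem \ref{t40}, (ii) by Lemma \ref{non supp}, (iii) by complement, and (iv)--(v) by Lemma \ref{tindependent}.
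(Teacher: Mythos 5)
Your proposal is correct and follows essentially the same route as the paper's own proof: item (i) via Theorem \ref{t40}, item (ii) via Lemma \ref{non supp} (ruling out adjacencies between $Supp(G)=Supp(G-C)$ and $V(C)$), item (iii) by complementation, and items (iv)--(v) via Lemma \ref{tindependent}. No gaps.
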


\begin{proof}
\begin{itemize}
\item[(i)] Just use Theorem \ref{t40}.
\item[(ii)] Given a vertex $p\in{Core(G)}$. Then there exists $p_1\in{Supp(G)}$ such that $p\in{N(p_1)}$. By item (i) we have that $p_1\in{Supp(G-C)}$. Using the Lemma \ref{non supp} we can conclude that $N(C)\cap{Supp(G-C)}=\emptyset$. Then $p\in{V(G-C)}$. Therefore, $p\in{Core(G-C)}$.
Now, given a vertex $p\in{Core(G-C)}$. Then there exists $p_1\in{Supp(G-C)}$ such that $p\in{N(p_1)}$.
By item (i) we have that $p_1\in{Supp(G)}$. Hence, $p\in{Core(G)}$.

\item[(iii)] Just use items (i) e (ii).
\item[(iv)] By item $(i)$ we have that $Supp(G)=Supp(G-C)=\bigcup\limits^{k}_{i=1}{Supp(T_i)}$. Note that for all $i\neq{j}$ we have that $N(Supp(T_i))\cap{Supp(T_j)}=\emptyset$. Using the Lemma \ref{tindependent} in every $T_i$ we obtain the result.
\item[(v)] Argument equal to the used in Proposition \ref{ti1} (v).
\end{itemize}
\end{proof}

Lemma \ref{lemagc} is used to proof item (i) of the Proposition \ref{ti5}.
\begin{lemat}\label{lemagc}
Let $G$ be a unicyclic graph and $C$ its cycle. If $G$ is a unicyclic graph of Type $II$, then $Supp(G-C)\subseteq{\bigcup\limits_{v\in{V(C)}}Supp(G\lbrace{v}\rbrace)}.$
\end{lemat}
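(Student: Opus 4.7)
The plan is to reduce to a single component of $G-C$ and then extend its null vectors to the pendant tree using the fact (via Lemma \ref{non supp}) that the attachment vertex is not in the support.

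Write $G-C = \bigsqcup_{i=1}^{k} T_i$ where each $T_i$ is a connected component. I first want to observe that in a unicyclic graph, each component $T_i$ is joined to the cycle $C$ by edges incident to a \emph{unique} cycle vertex $v_i$ (otherwise two edges from $T_i$ into different cycle vertices, together with a path inside $T_i$ and an arc of $C$, would produce a second cycle, contradicting that $G$ is unicyclic). For the same reason $v_i$ has exactly one neighbour $u_i \in V(T_i)$. Thus $G\{v_i\}$ is precisely $T_i$ together with the vertex $v_i$ and the single edge $v_iu_i$. It therefore suffices to show $Supp(T_i) \subseteq Supp(G\{v_i\})$ for each $i$ and then take unions.

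Now fix $i$, write $T = T_i$, $v = v_i$, $u = u_i$, and pick $x \in Supp(T)$; by definition there exists $y \in \mathcal{N}(T)$ with $y_x \neq 0$. Since $G$ is of Type $II$, Lemma \ref{c3} gives $v \in Supp(G\{v\})$, and then Lemma \ref{non supp} implies $u \notin Supp(G-C)$. In particular $u \notin Supp(T)$, so every vector in $\mathcal{N}(T)$ — and thus $y$ itself — must satisfy $y_u = 0$. Define $\tilde y \in \mathbb{R}^{V(G\{v\})}$ by $\tilde y|_{V(T)} = y$ and $\tilde y_v = 0$. At any vertex $w \in V(T) \setminus \{u\}$ the adjacency relation in $G\{v\}$ coincides with that in $T$, so $(A(G\{v\})\tilde y)_w = (A(T)y)_w = 0$; at $u$ there is one extra neighbour $v$ contributing $\tilde y_v = 0$, so $(A(G\{v\})\tilde y)_u = (A(T)y)_u + \tilde y_v = 0$; and $(A(G\{v\})\tilde y)_v = \tilde y_u = y_u = 0$. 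Hence $\tilde y \in \mathcal{N}(G\{v\})$ with $\tilde y_x = y_x \neq 0$, so $x \in Supp(G\{v\})$.

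Taking the union over $x \in Supp(T_i)$ and then over $i$ yields
\[
Supp(G-C) \;=\; \bigcup_{i=1}^{k} Supp(T_i) \;\subseteq\; \bigcup_{i=1}^{k} Supp(G\{v_i\}) \;\subseteq\; \bigcup_{v \in V(C)} Supp(G\{v\}),
\]
as required. The only real obstacle is the structural claim that each tree component attaches to a single cycle vertex (without it, extension by $\tilde y_v = 0$ would not automatically be a null vector); once that is in place, the argument is a direct verification using that the neighbour of a cycle vertex inside its pendant tree is forced out of the support by Lemma \ref{non supp}.
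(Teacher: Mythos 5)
Your proof is correct and follows essentially the same route as the paper's: extend a null vector of a component $T_i$ by zero to the pendant tree, and use Lemma \ref{non supp} (via Lemma \ref{c3}) to show the attachment vertex carries a zero entry, so the extension remains in the null space. One small slip: your claim that $G\lbrace{v_i}\rbrace$ is \emph{precisely} $T_i$ together with $v_i$ and the edge $v_iu_i$ is false in general, since several components of $G-C$ may attach to the same cycle vertex and the pendant tree $G\lbrace{v_i}\rbrace$ contains all of them. This does not damage the argument --- you simply extend $y$ by zero over all of $G\lbrace{v_i}\rbrace$ (as the paper does with $x\upharpoonright^{G\lbrace{v}\rbrace}_{T_i}$), and the verification at $v_i$ still yields zero because every other pendant neighbour of $v_i$ receives the value $0$.
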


\begin{proof}
Consider $G-C=\bigcup\limits^{k}_{i=1}{T_i}$, where $T_i$ is a connected component of $G-C$.
We know that $Supp(G-C)=\bigcup\limits_{i=1}^{k}Supp(T_i)$.
Given $w_i\in{Supp(G-C)}$, then there is an $i$ such that $w_i\in{Supp(T_i)}$. Pick $x\in\mathcal{N}(T_i)$ and note that $V(T_i)\subseteq{V(G\lbrace{v}\rbrace)}$ for some $v\in{V(C)}$. We will show that  $x\upharpoonright^{G\lbrace{v}\rbrace}_{T_i}\in{\mathcal{N}(G\lbrace{v}\rbrace)}$. In fact, since $G$ is a unicyclic graph of Type $II$ we have that $v\in{Supp(G\lbrace{v}\rbrace)}$. By Lemma \ref{non supp}, if $u_i\in{V(T_i)}\cap{N(v)}$, then $x_{u_i}=0$.
The adjacency matrix of $G\lbrace{v}\rbrace$ can have the following format

\begin{eqnarray*}
A(G\lbrace{v}\rbrace)&=&\begin{blockarray}{ccccccccc}
         & V(T_i)    & V(G\lbrace{v}\rbrace-T_i)   \\
\begin{block}{r[cccccccc]}
V(T_i)                    & A(T_i)       &  B \\
V(G\lbrace{v}\rbrace-T_i) &  {B}^t      &   A(G\lbrace{v}\rbrace-T_i)   \\
\end{block}
\end{blockarray}.\\
\end{eqnarray*}

\noindent Where $B$ is a submatrix of $A(G\lbrace{v}\rbrace)$ with almost all null entries, except for the entry corresponding to the adjacency between the vertex $u_i$ and the vertex $v$. Then the product

\begin{eqnarray*}
A(G)x\upharpoonright^{G\lbrace{v}\rbrace}_{T_i}=\left[\begin{array}{cccccccc}
    \mathbf{0}\\
    A(T_i)x\\
      B^t.x\\
      \mathbf{0} \\
\end{array}\right]=\left[\begin{array}{cccccccc}
             \mathbf{0}\\
    \mathbf{0}\\
      1.x_{u_i}\\
      \mathbf{0}\\
\end{array}\right]=\mathbf{0}
\end{eqnarray*}

gives $x\upharpoonright^{G\lbrace{v}\rbrace}_{T_i}\in{\mathcal{N}(G\lbrace{v}\rbrace)}$. Therefore, $w_i\in{Supp(G\lbrace{v}\rbrace)}$ and we have that $Supp(G-C)\subseteq{\bigcup\limits_{v\in{V(C)}}Supp(G\lbrace{v}\rbrace)}.$
\end{proof}

Proposition \ref{ti5} tells us what is the support, core and $N$-vertices of a unicyclic graph $G$ of Type $II$, where the cycle of $G$ has length equal to $4t$.
\begin{prop}
\label{ti5}
Let $G$ be a unicyclic graph of Type $II$ and $C$ its cycle. If $\mid{V(C)}\mid={4t}$ for $t\in{\mathbb{N}}$, then
\begin{itemize}
\item[(i)]$Supp(G)=\bigcup\limits_{v\in{V(C)}}Supp(G\lbrace{v}\rbrace);$
\item[(ii)]$Core(G)=V(C)\cup\left(\bigcup\limits_{v\in{V(C)}}Core(G\lbrace{v}\rbrace)\right);$
\item[(iii)] $V(\mathcal{G}_N(G))=\bigcup\limits_{v\in{V(C)}}V(\mathcal{G}_N(G\lbrace{v}\rbrace));$
\item[(iv)]$Supp(G)\cap{Core(G)}=V(C);$
\item[(v)] $Supp(G)$ is not an independent set of $G$.
\end{itemize}
\end{prop}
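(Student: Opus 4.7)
The plan is to exploit Theorem \ref{tbasisc4}, which hands us an explicit basis $\mathcal{B}\cup\{z_1,z_2\}$ of $\mathcal{N}(G)$, and to read off supports via Lemma \ref{suppcompute}. For (i), the vectors in $\mathcal{B}$ extend a basis of $\mathcal{N}(G-C)$, so collectively they contribute exactly $Supp(G-C)$, which is contained in $\bigcup_{v\in V(C)} Supp(G\{v\})$ by Lemma \ref{lemagc}. The special vectors $z_1,z_2$ are signed sums of extensions of the $x(v_i)$'s, each of which has support equal to $Supp(G\{v_i\})$ by hypothesis; since the pendant trees $G\{v_i\}$ are pairwise vertex-disjoint, no cancellation can occur and thus $Supp_G(z_1)\cup Supp_G(z_2) = \bigcup_{v\in V(C)} Supp(G\{v\})$. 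Combining these observations gives (i).

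For (ii), every $v\in V(C)$ lies in $Supp(G\{v\})\subseteq Supp(G)$ because $G$ is of Type $II$, so each cycle vertex is adjacent to another cycle vertex that belongs to the support, giving $V(C)\subseteq Core(G)$. If $p\in Core(G\{v\})$ then $p$ is adjacent to some $q\in Supp(G\{v\})\subseteq Supp(G)$, so $p\in Core(G)$, which yields the inclusion $\bigcup_v Core(G\{v\})\subseteq Core(G)$. Conversely, any $p\in Core(G)\setminus V(C)$ is a neighbor of some $q\in Supp(G\{v\})$ by (i); if $q=v$ then $p\in N(v)\cap V(G\{v\})\subseteq Core(G\{v\})$, while if $q\neq v$ then $q$ lies strictly inside $V(G\{v\})\setminus V(C)$, so all its neighbors are in $V(G\{v\})$, again placing $p\in Core(G\{v\})$.

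Item (iii) then follows formally from (i), (ii), and the partitions $V(G)=\bigsqcup_{v\in V(C)} V(G\{v\})$ together with the tree-level partition $V(G\{v\})=Supp(G\{v\})\sqcup Core(G\{v\})\sqcup V(\mathcal{G}_N(G\{v\}))$, noting that $V(C)\subseteq \bigcup_v Supp(G\{v\})$ contributes no new term to $Supp(G)\cup Core(G)$. For (iv), both containments $V(C)\subseteq Supp(G)$ and $V(C)\subseteq Core(G)$ have already been established; conversely, if $p\in (Supp(G)\cap Core(G))\setminus V(C)$ then disjointness of pendant trees off the cycle forces $p\in Supp(G\{v\})\cap Core(G\{v\})$ for the same $v$, contradicting the fact from \cite{tree} that the support and core of a tree are disjoint. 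Finally, (v) is immediate since any two adjacent cycle vertices lie in $V(C)\subseteq Supp(G)$.

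The main subtlety is justifying that the supports of $z_1$ and $z_2$ add up without cancellation; this rests on the pairwise disjointness of the pendant trees $G\{v_i\}$, and similar disjointness arguments resurface throughout items (ii)--(iv). A secondary point worth flagging is the use of the tree-level identity $Supp(G\{v\})\cap Core(G\{v\})=\emptyset$ in (iv), which must be imported from the null decomposition theory of trees in \cite{tree}.
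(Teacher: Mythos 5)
Your proof is correct and follows essentially the same route as the paper: item (i) via the basis of Theorem \ref{tbasisc4} together with Lemma \ref{lemagc}, item (ii) by direct neighborhood analysis using that $v$ is the only vertex of $G\lbrace{v}\rbrace$ with neighbors outside it, items (iii)--(v) as formal consequences plus the independence of tree supports. Your explicit justification that the supports of the summands of $z_1,z_2$ are pairwise disjoint (so no cancellation occurs) is a detail the paper leaves implicit, but it is the same argument.
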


\begin{proof}
\begin{itemize}
\item[(i)]Since that $G$ is a unicyclic graph of Type $II$ then by Lemma \ref{lemagc} $$Supp(G-C)\subseteq{\bigcup\limits_{v\in{V(C)}}Supp(G\lbrace{v}\rbrace)}.$$ Then using the Theorem \ref{tbasisc4} we obtain the result.
\item[(ii)] Given a vertex $p\in{Core(G)}$, then there exists a vertex $p_1\in{Supp(G)}$ such that $p\in{N(p_1)}$. By item (i) $p_1\in{Supp(G\lbrace{v}\rbrace)}$ for some $v\in{V(C)}$. If $p_1\neq{v}$ then $p\in{V(G\lbrace{v}\rbrace)}$, because $v$ is the only vertex of $G\lbrace{v}\rbrace$ that has neighbors  in $V(G-G\lbrace{v}\rbrace)$. It implies that $p\in {Core(G\lbrace{v}\rbrace)}$.
If $p_1=v$ we have that $p\in{V(C)\cup{V(G\lbrace{v}\rbrace)}}$, therefore $$p\in{V(C)\cup\left(\bigcup\limits_{v\in{V(C)}}Core(G\lbrace{v}\rbrace)\right)}.$$
Given a vertex $p\in{V(C)\cup\left(\bigcup\limits_{v\in{V(C)}}Core(G\lbrace{v}\rbrace)\right)}$. If $p\in{Core(G\lbrace{v}\rbrace)}$ for some pendant tree $G\lbrace{v}\rbrace$, then there exists $p_1\in{Supp(G\lbrace{v}\rbrace)}$ such that $p\in{N(p_1)}$. By item (i)  $p_1\in{Supp(G)}$. Therefore, $p\in{Core(G)}$.
Now, consider $p\in{V(C)}$. Obviously, we have that $p\in{N(v)}$ for some $v\in{V(C)}$. Using the item (i) and since that $G$ is a unicyclic graph of Type $II$ we have that $V(C)\subseteq{Supp(G)}$, then $v\in{Supp(G)}$. Therefore, $p\in{Core(G)}$.

\item[(iii)] Just use items (i) e (ii).
\item[(iv)] Given a vertex $v\in{V(C)}$, then $v\in{Supp(G\lbrace{v}\rbrace)}$ because $G$ is a unicyclic graph of Type $II$. That is, $v\in{\bigcup\limits_{v\in{V(C)}}Supp(G\lbrace{v}\rbrace)}$. By item (i) we have that $v\in{{Supp(G)}}$, and it implies that $V(C)\subseteq{Supp(G)}$. Moreover, there is $w\in{N(v)\cap{V(C)}}$ and since that $w\in{{Supp(G)}}$ we have that $v\in{{Core(G)}}$. Hence, $v\in{{Supp(G)}\cap{Core(G)}}$.
Now, given a vertex $v\in{{Supp(G)}\cap{Core(G)}}$, then there exists a vertex $w\in{Supp(G)}$ such that $w\in{N(v)}$. If $v\in{V(C)}$ we have nothing to proof.  Suppose that $v\notin{V(C)}$. Then we have that $v\in{V(G\lbrace{u}\rbrace))}$ for some $u\in{V(C)}$. Since that $u$ is the only vertex of $G\lbrace{u}\rbrace$ that has neighbors outside of $G\lbrace{u}\rbrace$ we can conclude that $w\in{V(G\lbrace{u}\rbrace)}$. Then by item (i), $w\in{Supp(G\lbrace{u}\rbrace)}$. Note that $v\in{Supp(G\lbrace{u}\rbrace)}$, because
$$v\in{Supp(G)=\bigcup\limits_{v\in{V(C)}}Supp(G\lbrace{v}\rbrace)}.$$
Therefore, $v$ and $w\in{Supp(G\lbrace{u}\rbrace)}$ which is a contradiction, because $Supp(G\lbrace{u}\rbrace)$ is an independent set. Hence, $v\in{V(C)}$.
\item[(v)] By item (i) and since that $G$ is a unicyclic graph of Type $II$, we have that  $V(C)\subseteq{Supp(G)}$. Then $Supp(G)$ is not an independent set.
\end{itemize}
\end{proof}


\section{Null Decomposition of Unicyclic Graphs}
\label{sec:nulldecom}
In this section, we obtain closed formulas for matching and independence numbers of unicyclic graphs. These formulas depend on the support, core and $N$-vertices of unicyclic graphs. Coincidentally, the formulas are very similar to those obtained by Jaume and Molina \cite{tree} for trees.

The next lemmas will be used to prove our main results (theorems \ref{thm1} and \ref{thm2}).

\begin{lemat}\cite{Maikon}
\label{propalpha1}
If $G$ is a unicyclic graph of Type $I$ and $G\lbrace{v}\rbrace$ its pendant tree such that $v\notin{Supp(G\lbrace{v}\rbrace)}$, then $$\alpha{(G)}=\vert{Supp(G\lbrace{v}\rbrace)}\vert+\vert{Supp(G-G\lbrace{v}\rbrace)}\vert{+}\frac{\vert{V(\mathcal{G}_N(G\lbrace{v}\rbrace))}\vert{+}\vert{V(\mathcal{G}_N(G-G\lbrace{v}\rbrace))}\vert}{2}\\.$$
\end{lemat}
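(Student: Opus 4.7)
The overall plan is to reduce the lemma to two ingredients: the additive identity $\alpha(G)=\alpha(G\{v\})+\alpha(G-G\{v\})$ and the null-decomposition formula $\alpha(T)=|Supp(T)|+|V(\mathcal{G}_N(T))|/2$ from \cite{tree} applied separately to each piece.

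Because $G\{v\}$ meets the cycle of $G$ only at $v$, deleting $V(G\{v\})$ breaks the cycle into a path, so $G-G\{v\}$ is a forest; moreover the only edges of $G$ joining the two pieces are $vu$ and $vw$, where $u,w$ are the cycle-neighbours of $v$. The upper bound $\alpha(G)\le\alpha(G\{v\})+\alpha(G-G\{v\})$ then follows immediately by restricting any maximum independent set of $G$ to the two pieces.

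For the matching lower bound I would produce a maximum independent set $I_1$ of $G\{v\}$ that avoids $v$. By Lemma~\ref{t11}, the hypothesis $v\notin Supp(G\{v\})$ forces $v$ to be saturated by every maximum matching of $G\{v\}$; deleting $v$ together with its mate in any maximum matching shows $\nu(G\{v\}-v)=\nu(G\{v\})-1$, and K\"onig--Egerv\'ary applied to the bipartite tree $G\{v\}$ then yields $\alpha(G\{v\}-v)=\alpha(G\{v\})$. Such an $I_1$ therefore exists; if $I_2$ is any maximum independent set of the forest $G-G\{v\}$, then $I_1\cup I_2$ is independent in $G$ because the only cross-edges are $vu$ and $vw$ and $v\notin I_1$. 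This gives $\alpha(G)\ge|I_1|+|I_2|=\alpha(G\{v\})+\alpha(G-G\{v\})$, hence equality.

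The lemma now follows by applying the \cite{tree} formula separately to the forests $G\{v\}$ and $G-G\{v\}$ (whose $N$-graphs admit perfect matchings by construction) and adding the two expressions. The main obstacle is the production of $I_1$: it is the step that converts the null-space hypothesis $v\notin Supp(G\{v\})$, through the equality $EG(T)=Supp(T)$ and K\"onig--Egerv\'ary, into the independent-set data needed to glue the two pieces together without conflict. Once that bridge is built, the rest of the proof is bookkeeping.
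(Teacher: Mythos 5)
This lemma is imported from \cite{Maikon} and the paper gives no proof of it, so there is nothing in-text to compare your argument against; I can only assess it on its own terms, and on those terms it is correct. Your reduction to the additivity $\alpha(G)=\alpha(G\{v\})+\alpha(G-G\{v\})$ works: the upper bound is immediate from restriction, and for the lower bound the chain ``$v\notin Supp(G\{v\})$ $\Rightarrow$ $v$ is matched in every maximum matching (Lemma \ref{t11}) $\Rightarrow$ $\nu(G\{v\}-v)=\nu(G\{v\})-1$ $\Rightarrow$ $\alpha(G\{v\}-v)=\alpha(G\{v\})$ via K\"onig--Egerv\'ary on the bipartite tree'' is sound and produces the maximum independent set $I_1$ avoiding $v$; since the only cross-edges are $vu$ and $vw$, the union $I_1\cup I_2$ is independent. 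Applying Lemma \ref{t35} componentwise to the tree $G\{v\}$ and the forest $G-G\{v\}$ then gives exactly the stated formula. Your key step is in fact the same device this paper uses elsewhere (the matching-deletion argument of Lemmas \ref{lematec2} and \ref{lematec3}, and the independent-set analogue in Lemma \ref{lemac4ti5}), just routed through K\"onig--Egerv\'ary rather than through the tree identities directly; the one cosmetic point is that the parenthetical claim that the $N$-graphs ``admit perfect matchings by construction'' is neither needed for, nor established by, your argument, and should be dropped or attributed to \cite{tree}.
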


\begin{lemat}\cite{Maikon}
\label{propalpha2}
Let $G$ be a unicyclic graph and $C$ its cycle. Consider $G-C=\bigcup\limits^{k}_{i=1}{T_i}$, where $T_i$ is a connected component of $G-C$. If $G$ is a unicyclic graph of Type $II$, then
$$\alpha{(G)}=\floor*{\frac{\vert{V(C)}\vert}{2}}+\sum\limits^{k}_{i=1}\vert{Supp(T_i)}\vert{+}\frac{\vert{V(\mathcal{G}_N(T_i))}\vert}{2}.$$
\end{lemat}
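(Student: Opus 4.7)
The strategy is to establish the additive decomposition $\alpha(G) = \alpha(C) + \alpha(G-C)$ and then apply $\alpha(C_n) = \lfloor n/2 \rfloor$ together with the Jaume--Molina tree formula $\alpha(T) = |Supp(T)| + |V(\mathcal{G}_N(T))|/2$ componentwise to $G-C = \bigcup_i T_i$.

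The upper bound is clean: any independent set $I$ of $G$ restricts to independent sets in $C$ and in $G-C$, so $|I| \le \alpha(C) + \alpha(G-C)$. For the reverse inequality the only obstruction to gluing a maximum independent set of $C$ with one of $G-C$ is an edge $vu$ with $v \in V(C)$ and $u$ its unique neighbor in some component $T_i$. Here I would invoke Lemma \ref{non supp}: since $G$ is of Type II, every such attachment vertex $u$ satisfies $u \notin Supp(T_i)$, and thus $u \in Core(T_i) \cup V(\mathcal{G}_N(T_i))$.

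Using this, I would build a maximum independent set $I_N$ of $G-C$ that avoids every attachment vertex $u_i$. In each $T_i$ the Jaume--Molina decomposition yields a canonical maximum independent set $Supp(T_i) \cup J_i$, where $J_i$ is a maximum independent set of $\mathcal{G}_N(T_i)$. If $u_i \in Core(T_i)$ the canonical set already omits $u_i$, since $Supp(T_i)$ is independent and therefore disjoint from $Core(T_i)$. If instead $u_i \in V(\mathcal{G}_N(T_i))$, the component $H$ of $\mathcal{G}_N(T_i)$ containing $u_i$ is a non-singular tree of even order with a (unique) perfect matching, and I would show that a maximum independent set of $H$ avoiding any prescribed vertex exists. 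Picking $J_i$ component-by-component to realize this, and then choosing any maximum independent set $I_C$ of $C$ of size $\lfloor |V(C)|/2 \rfloor$, the union $I = I_C \cup I_N$ is independent in $G$ and has the target cardinality.

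The main obstacle is the avoidance claim inside a non-singular tree $H$. I would justify it by a parity argument: if $|V(H)| = 2m$ with perfect matching $M$, then for any $v \in V(H)$ the set $M$ minus the unique edge through $v$ witnesses $\nu(H - v) \ge m - 1$, while the order $2m - 1$ forces $\nu(H - v) \le m - 1$. Applying König to the bipartite forest $H - v$ then gives $\alpha(H - v) = (2m - 1) - (m - 1) = m = \alpha(H)$, so a maximum independent set of $H - v$ is in fact a maximum independent set of $H$ avoiding $v$. Combining this avoidance step with the Jaume--Molina formula for $\alpha(T_i)$ finally yields the claimed identity $\lfloor |V(C)|/2 \rfloor + \sum_i \bigl( |Supp(T_i)| + |V(\mathcal{G}_N(T_i))|/2 \bigr)$.
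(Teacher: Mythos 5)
Your argument is correct. Note first that the paper does not actually prove Lemma \ref{propalpha2}: it is imported wholesale from \cite{Maikon}, so there is no internal proof to compare against. Your route --- the two-sided bound $\alpha(G)=\alpha(C)+\alpha(G-C)$, with the lower bound secured by a maximum independent set of $G-C$ that avoids every attachment vertex --- is sound. The upper bound is immediate from restriction to induced subgraphs; for the lower bound you correctly observe that each component $T_i$ meets the cycle through a unique attachment vertex $u_i$ (else $G$ would have a second cycle), that Lemma \ref{non supp} together with Lemma \ref{c3} forces $u_i\notin Supp(T_i)$, and your case split ($u_i\in Core(T_i)$ versus $u_i\in V(\mathcal{G}_N(T_i))$) with the parity/K\"onig computation inside a perfectly matched component $H$ of $\mathcal{G}_N(T_i)$ does produce a maximum independent set of $T_i$ avoiding $u_i$. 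Summing over $i$, gluing with a maximum independent set of $C$, and invoking Lemma \ref{t35} componentwise yields the stated formula.

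Two remarks. First, the one ingredient you use without justification is that the component of $\mathcal{G}_N(T_i)$ containing $u_i$ is a tree with a perfect matching; this is part of the null decomposition of trees in \cite{tree} but is not restated in the present paper, so it should be cited explicitly rather than asserted. Second, the entire avoidance step can be shortcut using lemmas already quoted here: since $u_i\notin Supp(T_i)$, either $u_i\in Core(T_i)$, in which case Lemma \ref{tcore} says every maximum independent set of $T_i$ avoids $u_i$, or $u_i\in V(\mathcal{G}_N(T_i))$, in which case Lemma \ref{t30} supplies some $I\in\mathcal{I}(T_i)$ with $u_i\notin I$. This is exactly the device the authors use in proving Lemma \ref{lematec1}, and it makes your hand-built canonical set $Supp(T_i)\cup J_i$ unnecessary, since any maximum independent set of $T_i$ already has cardinality $\vert Supp(T_i)\vert+\vert V(\mathcal{G}_N(T_i))\vert/2$ by Lemma \ref{t35}. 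Your version buys self-containedness at the cost of importing the perfect-matching structure; the shortcut buys brevity by leaning on the quoted tree lemmas.
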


\begin{lemat}\cite{Maikon}
\label{propnu1}
If $G$ is a unicyclic graph of Type $I$ and $G\lbrace{v}\rbrace$ its pendant tree such that $v\notin{Supp(G\lbrace{v}\rbrace)}$, then $$\nu(G)=\vert{Core(G\lbrace{v}\rbrace)}\vert{+}\vert{Core(G-G\lbrace{v}\rbrace)}\vert{+}\frac{\vert{V(\mathcal{G}_N(G\lbrace{v}\rbrace))}\vert+\vert{V(\mathcal{G}_N(G-G\lbrace{v}\rbrace))}\vert}{2}.$$
\end{lemat}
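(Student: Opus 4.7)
My plan is to reduce the identity to two applications of the Jaume--Molina tree formula $\nu(T)=|Core(T)|+|V(\mathcal{G}_N(T))|/2$ from \cite{tree}, via an additivity argument for the matching number.

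First, I would observe that $G\{v\}$ is a tree by definition, and that $G-G\{v\}$ is also a tree: removing $v$ from the unique cycle $C$ of $G$ turns $C$ into a path, and the remaining pendant trees stay attached to that path, so $G-G\{v\}$ is connected and acyclic. Consequently the Jaume--Molina tree formula applies to each of $G\{v\}$ and $G-G\{v\}$. Summing the two instances produces precisely the right-hand side of the claim, so it suffices to establish the additivity
\[
\nu(G)=\nu(G\{v\})+\nu(G-G\{v\}).
\]

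Second, I would prove this additivity in two directions. The inequality $\geq$ is immediate, since $V(G\{v\})$ and $V(G-G\{v\})$ are disjoint, so the union of any maximum matchings of the two pieces is a matching of $G$. For $\leq$, take $M\in\mathcal{M}(G)$ and write $M=M_1\cup M_2\cup M_c$, where $M_1\subseteq E(G\{v\})$, $M_2\subseteq E(G-G\{v\})$, and $M_c$ consists of at most one ``crossing'' edge, namely an edge from $v$ to one of its cycle neighbours $u,w\in V(G-G\{v\})$. If $M_c=\emptyset$, then $|M|=|M_1|+|M_2|\leq\nu(G\{v\})+\nu(G-G\{v\})$. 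If instead $M_c=\{vu\}$, then $M_1$ is a matching of $G\{v\}$ that fails to saturate $v$; invoking Lemma \ref{t11} together with the hypothesis $v\notin Supp(G\{v\})$ gives $v\notin EG(G\{v\})$, i.e.\ every maximum matching of $G\{v\}$ saturates $v$, and hence $|M_1|\leq\nu(G\{v\})-1$. Therefore $|M|\leq(\nu(G\{v\})-1)+\nu(G-G\{v\})+1$, as required.

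Third, I expect the crossing-edge subcase to be the only real obstacle: one must rule out the possibility that using a crossing edge increases the total matching size beyond what the two pieces can supply on their own. The hypothesis $v\notin Supp(G\{v\})$ is precisely the slack needed, and the Edmond--Gallai characterization of tree supports in Lemma \ref{t11} is what converts this support condition into the inequality $|M_1|\leq\nu(G\{v\})-1$. Once additivity is in hand, substituting the Jaume--Molina tree formulas for $\nu(G\{v\})$ and $\nu(G-G\{v\})$ yields the claimed closed formula for $\nu(G)$.
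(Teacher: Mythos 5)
Your proof is correct. Note that the paper does not actually prove Lemma \ref{propnu1}; it is imported from \cite{Maikon} without proof, so there is no in-paper argument to compare against. Your reduction is the natural one and all the steps check out: $G-G\lbrace{v}\rbrace$ is indeed a tree (the cycle minus $v$ becomes a path carrying the remaining pendant trees), so Lemma \ref{t35} applies to both pieces, and the identity becomes equivalent to the additivity $\nu(G)=\nu(G\lbrace{v}\rbrace)+\nu(G-G\lbrace{v}\rbrace)$. Your handling of the crossing edge is the crux and is done correctly: the only edges between the two pieces are $vu$ and $vw$, at most one lies in any matching, and if one does, then $M_1$ misses $v$, so $v\notin Supp(G\lbrace{v}\rbrace)=EG(G\lbrace{v}\rbrace)$ (Lemma \ref{t11}) forces $\vert{M_1}\vert\leq\nu(G\lbrace{v}\rbrace)-1$, which exactly absorbs the extra crossing edge. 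This is the same mechanism the paper itself deploys in the proofs of Lemma \ref{lematec2} and Lemma \ref{lematec3} (deleting a matching edge at $v$ and using Lemma \ref{t11} to certify maximality), so your argument is fully consistent with the toolkit available in the paper and is self-contained where the paper is not.
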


\begin{lemat}\cite{Maikon}
\label{propnu2}
Let $G$ be a unicyclic graph and $C$ its cycle. Consider $G-C=\bigcup\limits^{k}_{i=1}{T_i}$, where $T_i$ is a connected component of $G-C$. If $G$ is a unicyclic graph of Type $II$, then
\begin{equation*}
\nu(G)=\floor*{\frac{\vert{V(C)}\vert}{2}}+\sum\limits^{k}_{i=1}\vert{Core(T_i)}\vert+\frac{\vert{V(\mathcal{G}_N(T_i))}\vert}{2}.
\end{equation*}
\end{lemat}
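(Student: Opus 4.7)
\emph{Proof plan.} My plan is to reduce $\nu(G)$ to the sum of $\floor*{|V(C)|/2}$ and $\nu(G-C)$, and then apply the tree formula $\nu(T)=|Core(T)|+|V(\mathcal{G}_N(T))|/2$ of Jaume and Molina \cite{tree} to each connected component of $G-C$.

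First, I would extract the key consequence of the Type II hypothesis. By Lemma \ref{c3}, every $v\in V(C)$ lies in $Supp(G\lbrace v\rbrace)$, and Lemma \ref{t11} applied to the tree $G\lbrace v\rbrace$ identifies $Supp(G\lbrace v\rbrace)$ with $EG(G\lbrace v\rbrace)$. Thus for each $v\in V(C)$ some maximum matching of $G\lbrace v\rbrace$ does not saturate $v$, so it lives in $T_v:=G\lbrace v\rbrace-v$; this gives $\nu(G\lbrace v\rbrace)=\nu(T_v)$.

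Next, I would establish the intermediate equality $\nu(G)=\floor*{|V(C)|/2}+\nu(G-C)$. Because each $G\lbrace v\rbrace$ contains no cycle vertex other than $v$, the edge set of $G$ partitions as $E(G)=E(C)\sqcup\bigsqcup_{v\in V(C)}E(G\lbrace v\rbrace)$. For the lower bound, combining a maximum matching of $C$ with a maximum matching of each $T_v$ uses pairwise disjoint vertex sets (the cycle edges touch only $V(C)$; the $T_v$-matchings touch only $V(T_v)$; the $T_v$'s are vertex-disjoint) and hence is a matching of $G$ of size $\floor*{|V(C)|/2}+\sum_{v\in V(C)}\nu(T_v)$. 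For the upper bound, any matching $M$ of $G$ decomposes along the same edge partition as $M=(M\cap E(C))\sqcup\bigsqcup_{v}(M\cap E(G\lbrace v\rbrace))$, with $|M\cap E(C)|\le\nu(C)=\floor*{|V(C)|/2}$ and $|M\cap E(G\lbrace v\rbrace)|\le\nu(G\lbrace v\rbrace)=\nu(T_v)$; summing gives the matching upper bound. The two bounds coincide precisely because the Type II reduction $\nu(G\lbrace v\rbrace)=\nu(T_v)$ allows the optimal matchings on the pendant trees to leave every cycle vertex free for use along $C$.

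Finally, additivity of the matching number over connected components yields $\nu(G-C)=\sum_{i=1}^{k}\nu(T_i)$, and substituting the tree formula $\nu(T_i)=|Core(T_i)|+|V(\mathcal{G}_N(T_i))|/2$ from \cite{tree} produces the stated identity. The only delicate step is the Type II reduction $\nu(G\lbrace v\rbrace)=\nu(T_v)$; once it is in hand everything reduces to a transparent size count on the edge partition followed by an appeal to the known tree case, so I do not foresee a real obstacle.
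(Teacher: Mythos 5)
Lemma \ref{propnu2} is imported from \cite{Maikon} and the present paper gives no proof of it, so there is no in-paper argument to compare against line by line; judged on its own, your proof is correct and complete. The key reduction $\nu(G\lbrace v\rbrace)=\nu(G\lbrace v\rbrace-v)$ for every cycle vertex $v$ follows exactly as you say from Lemma \ref{c3} together with Lemma \ref{t11} (a maximum matching of $G\lbrace v\rbrace$ missing $v$ lives entirely in $G\lbrace v\rbrace-v$), the edge partition $E(G)=E(C)\sqcup\bigsqcup_{v\in V(C)}E(G\lbrace v\rbrace)$ is valid because $G$ is unicyclic, and the two bounds you derive from it do meet. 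The final substitution is just Lemma \ref{t35} applied to each component $T_i$ of $G-C$, together with additivity of $\nu$, $Core$, and $V(\mathcal{G}_N(\cdot))$ over components. Your argument is also consonant with the techniques the paper does use for neighbouring statements: the matching-deletion step is the same device as in the proofs of Lemmas \ref{lematec2} and \ref{lematec3} (there in the opposite regime $v\notin Supp(G\lbrace v\rbrace)$, where one gets $\nu(G\lbrace v\rbrace)=1+\nu(G\lbrace v\rbrace-v)$ instead), and the identity $\nu(G)=\lfloor |V(C)|/2\rfloor+\nu(G-C)$ that you establish is precisely what Case 6 of Theorem \ref{thm1} manipulates. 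No gap.
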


The next result gives closed formulas for the independence and matching numbers of trees and it will be used to prove lemmas \ref{lematec1} and \ref{lematec3}.

\begin{lemat}\cite{tree}
\label{t35}
Let $T$ be a tree. Then
\begin{eqnarray*}
\nu(T)&=&\vert{Core(T)}\vert+\frac{\vert{V(\mathcal{G}_N(T))}\vert}{2}\\
\alpha(T)&=&\vert{Supp(T)}\vert+\frac{\vert{V(\mathcal{G}_N(T))}\vert}{2}.
\end{eqnarray*}
\end{lemat}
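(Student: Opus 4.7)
The plan is to combine two earlier results for trees, namely Lemma \ref{tindependent} ($Supp(T)$ is independent) and Lemma \ref{t11} ($EG(T)=Supp(T)$), with the classical Edmonds--Gallai structure theorem. First I would establish the three-set partition $V(T)=Supp(T)\sqcup Core(T)\sqcup V(\mathcal{G}_N(T))$. The three sets cover $V(T)$ because $N[Supp(T)]=Supp(T)\cup Core(T)$ is precisely the vertex set of $\mathcal{G}_S(T)$; the sets $Supp(T)$ and $Core(T)$ are disjoint, since a vertex in both would be adjacent to a support vertex, contradicting Lemma \ref{tindependent}; and $V(\mathcal{G}_N(T))$ is disjoint from $N[Supp(T)]$ by construction.

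For the matching identity, I would apply the Edmonds--Gallai theorem to $T$ with the triple $D:=EG(T)=Supp(T)$, $A:=N(D)\setminus D=Core(T)$, and $C:=V(T)\setminus(A\cup D)=V(\mathcal{G}_N(T))$. The theorem asserts, for any maximum matching $M$ of $T$, that (i) each connected component of $T[D]$ is factor-critical with at most one $M$-unsaturated vertex; (ii) $M$ induces a perfect matching on $T[C]$; and (iii) $M$ pairs every vertex of $A$ with a vertex in a distinct component of $T[D]$. Because $Supp(T)$ is independent, every component of $T[D]$ is a single isolated vertex, so (i) contributes no matching edges, (ii) contributes $|V(\mathcal{G}_N(T))|/2$ edges, and (iii) contributes exactly $|Core(T)|$ edges. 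Summing these counts gives $\nu(T)=|Core(T)|+|V(\mathcal{G}_N(T))|/2$.

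For the independence number, I would invoke K\"onig's theorem: every tree is bipartite, so $\alpha(T)=|V(T)|-\nu(T)$. Substituting the partition identity from the first paragraph together with the matching formula just derived,
\[
\alpha(T)=\bigl(|Supp(T)|+|Core(T)|+|V(\mathcal{G}_N(T))|\bigr)-|Core(T)|-\frac{|V(\mathcal{G}_N(T))|}{2}=|Supp(T)|+\frac{|V(\mathcal{G}_N(T))|}{2},
\]
which is the second identity.

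The principal obstacle is the appeal to the full Edmonds--Gallai theorem, which is not present as a numbered lemma in the excerpt. If a self-contained argument is preferred, I would instead induct on $|V(T)|$ by deleting a pendant edge $\{\ell,u\}$ and case-splitting on whether $u\in Supp(T)$; Lemma \ref{t11} translates this condition into whether $u$ is saturated by every maximum matching, and in each branch the triple $(Supp(T),Core(T),V(\mathcal{G}_N(T)))$ transforms in a controlled way so that the inductive hypothesis combined with the corresponding identity $\nu(T)=\nu(T-\{\ell,u\})+1$ (or $\nu(T)=\nu(T-\ell)$) yields both equalities. The bookkeeping is routine but tedious, so the Edmonds--Gallai route is cleaner.
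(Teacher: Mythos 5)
Your argument is correct, but note that the paper does not actually prove this lemma: it is imported verbatim from \cite{tree}, so there is no internal proof to compare against. Your route is a clean external derivation. The key steps all check out: the partition $V(T)=Supp(T)\sqcup Core(T)\sqcup V(\mathcal{G}_N(T))$ follows from Lemma \ref{tindependent} exactly as you say (a vertex in $Supp(T)\cap Core(T)$ would be a support vertex adjacent to a support vertex); with $D=EG(T)=Supp(T)$ from Lemma \ref{t11}, the Gallai--Edmonds sets are $A=N(D)\setminus D=Core(T)$ and $C=V(\mathcal{G}_N(T))$, the components of $T[D]$ are singletons by independence, and the edge count $\nu(T)=|Core(T)|+|V(\mathcal{G}_N(T))|/2$ drops out of the structure theorem (equivalently, from $\nu=\tfrac12(|V|-c(T[D])+|A|)$). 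The passage to $\alpha$ via K\"onig--Egerv\'ary and the Gallai identity $\alpha+\tau=n$ on the bipartite graph $T$ is also fine. By contrast, the source \cite{tree} builds these formulas from its null-decomposition machinery for forests (showing the $N$-forest has a perfect matching and the $S$-forest contributes $|Supp|$ to $\alpha$ and $|Core|$ to $\nu$), which is more elementary but longer; your approach buys brevity at the cost of invoking the full Gallai--Edmonds theorem, which is classical and safe to cite even though it appears nowhere in this paper as a numbered result. Your fallback induction on a pendant edge would also work but is not needed.
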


\begin{lemat}\cite{Maikon}
\label{t30}
If $T$ is a tree and $v\in{V(\mathcal{G}_N(T))}$, then there exist $I_1,I_2\in\mathcal{I}(T)$ such that $v\in{I_1}$ and $v\notin{I_2}$.
\end{lemat}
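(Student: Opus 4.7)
The plan is to lift two maximum independent sets of the component of $\mathcal{G}_N(T)$ containing $v$ up to $T$, using the null decomposition of the tree. Let $T'$ denote the connected component of $\mathcal{G}_N(T)$ that contains $v$. The crucial structural fact from the null decomposition in \cite{tree} is that each component of $\mathcal{G}_N(T)$ is a non-singular tree. In particular, $T'$ has a perfect matching, and since $T'$ is bipartite its two color classes $A$ and $B$ must satisfy $|A|=|B|=|V(T')|/2$. Applying Lemma \ref{t35} to $T'$ (with $Supp(T')=\emptyset$ and $\mathcal{G}_N(T')=T'$) gives $\alpha(T')=|V(T')|/2$, so both $A$ and $B$ are themselves maximum independent sets of $T'$. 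Without loss of generality, $v\in A$ and $v\notin B$.

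For each remaining component $T_j$ of $\mathcal{G}_N(T)$, fix a maximum independent set $J_j$ (for example, one of its bipartition classes). Define
$$I_1 \,=\, Supp(T)\cup A \cup \bigcup_{j} J_j, \qquad I_2 \,=\, Supp(T)\cup B \cup \bigcup_{j} J_j.$$
To verify that $I_1,I_2\in\mathcal{I}(T)$, note that $Supp(T)$ is independent by Lemma \ref{tindependent}, each of $A$, $B$, $J_j$ is independent inside its own component, and the components of $\mathcal{G}_N(T)$ are pairwise vertex-disjoint with no edges between them. Moreover, since $V(\mathcal{G}_N(T))=V(T)\setminus N[Supp(T)]$, no vertex of $\mathcal{G}_N(T)$ is adjacent to any support vertex. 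Hence both unions are independent in $T$. Their size is $|Supp(T)|+|V(\mathcal{G}_N(T))|/2$, which equals $\alpha(T)$ by Lemma \ref{t35}. Finally, $v\in A\subseteq I_1$, while $v\notin B$, $v\notin Supp(T)$ (because $v\in V(\mathcal{G}_N(T))$), and $v\notin T_j$ for $j\neq T'$, so $v\notin I_2$.

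The main obstacle I expect is not the lifting argument itself but the appeal to the fact that every component of $\mathcal{G}_N(T)$ is a non-singular tree. That fact is not stated explicitly among the preliminaries recalled in this excerpt; it is a cornerstone of the Jaume--Molina null decomposition in \cite{tree} and would be invoked rather than re-proved. Without it, one would have to show by induction on $|V(T')|$ that every vertex of a tree with perfect matching lies both in some MIS and outside some MIS, which the bipartition observation above trivializes.
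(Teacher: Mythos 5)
Your argument is correct, but note that the paper itself offers no proof to compare against: Lemma \ref{t30} is imported verbatim from \cite{Maikon}, so any proof here is necessarily "different from the paper's." On the merits, the construction is sound. The one load-bearing external fact is the claim that every component of $\mathcal{G}_N(T)$ is a non-singular tree with a perfect matching; that is indeed a theorem of \cite{tree}, and you are right that it is not among the preliminaries recalled here. It is worth observing, however, that you do not actually need to reach outside the stated lemmas for it: take any $M\in\mathcal{M}(T)$; by Lemma \ref{t11} it saturates all of $Core(T)\cup V(\mathcal{G}_N(T))$, and since $2\nu(T)=2\vert Core(T)\vert+\vert V(\mathcal{G}_N(T))\vert$ by Lemma \ref{t35}, exactly $\vert Core(T)\vert$ support vertices are saturated. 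Because every edge meeting $Supp(T)$ goes into $Core(T)$ and $Supp(T)$ is independent, a count of the $M$-edges by type (support--core, core--core, core--$N$, $N$--$N$) forces the core--core and core--$N$ counts to vanish, so $M$ restricted to $\mathcal{G}_N(T)$ is a perfect matching of it. With that supplied, the rest of your proof is airtight: the two colour classes of the component $T'$ containing $v$ are both maximum independent sets of $T'$ of size $\vert V(T')\vert/2$, the unions $I_1,I_2$ are independent because $\mathcal{G}_N(T)$ is an induced subgraph whose vertices avoid $N[Supp(T)]$, and Lemma \ref{t35} certifies $\vert I_1\vert=\vert I_2\vert=\alpha(T)$. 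The bipartition trick is a clean way to get the two required sets simultaneously and avoids the alternating-path or induction arguments one might otherwise deploy.
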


\begin{lemat}\cite{Maikon}\label{tcore}
Let $T$ be a tree and $I$ an independent set of $T$. If $c_i\in{Core(T)}$ and
$c_i\in{I}$, then $I\notin\mathcal{I}(T)$.
\end{lemat}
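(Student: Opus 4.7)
My plan is to argue by contradiction. Suppose $I \in \mathcal{I}(T)$ contains some $c_i \in Core(T)$; I aim to produce an independent set of $T$ strictly larger than $I$, violating maximality. By the definition of the core, there is $s \in Supp(T)$ with $sc_i \in E(T)$, and since $I$ is independent, $s \notin I$.

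The candidate enlargement is
\[
  J \;:=\; Supp(T) \;\cup\; \bigl(I \cap V(\mathcal{G}_N(T))\bigr).
\]
First I would verify that $J$ is independent in $T$: $Supp(T)$ is independent by Lemma~\ref{tindependent}; $I \cap V(\mathcal{G}_N(T)) \subseteq I$ is independent; and no edge of $T$ joins the two pieces, because any neighbor of a support vertex lies in $Core(T) \subseteq V(\mathcal{G}_S(T))$, which is disjoint from $V(\mathcal{G}_N(T))$ by the definition of the null decomposition. A direct count then yields
\[
  |J| - |I| \;=\; |Supp(T)\setminus I| \;-\; |I \cap Core(T)|,
\]
so the task reduces to proving the strict combinatorial inequality $|Supp(T)\setminus I| > |I \cap Core(T)|$.

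To obtain this inequality I would restrict attention to the $S$-graph $\mathcal{G}_S(T)$: the set $I \cap V(\mathcal{G}_S(T))$ is an independent set of $\mathcal{G}_S(T)$ containing the core vertex $c_i$, so it suffices to show that $Supp(T)$ is the unique maximum independent set of $\mathcal{G}_S(T)$. The structural output of the Jaume--Molina null decomposition from \cite{tree} tells us that each connected component of $\mathcal{G}_S(T)$ is a tree whose $N$-vertex set is empty, whose support equals its vertex set intersected with $Supp(T)$, and whose support strictly exceeds its core in cardinality. Applying Lemma~\ref{t35} componentwise gives $\alpha(\mathcal{G}_S(T)) = |Supp(T)|$, attained by $Supp(T)$, and a direct matching argument on each such ``support-type'' component (every maximum matching of the component saturates its core entirely, since the matching number equals the core size) then shows that $Supp(T)$ is the only maximum independent set of $\mathcal{G}_S(T)$; in particular $|I \cap V(\mathcal{G}_S(T))| < |Supp(T)|$.

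The main obstacle I expect is this uniqueness claim for components of $\mathcal{G}_S(T)$---that no maximum independent set of a tree with empty $N$-vertex set can intersect its core. Once that is established, the strict inequality $|I \cap V(\mathcal{G}_S(T))| < |Supp(T)|$ rearranges exactly to $|Supp(T)\setminus I| > |I \cap Core(T)|$, so $|J| > |I|$, contradicting $I \in \mathcal{I}(T)$.
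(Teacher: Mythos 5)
The paper does not actually prove this lemma---it is imported from \cite{Maikon} as a black box---so there is no in-paper proof to compare against; judging your argument on its own terms, it has a genuine gap at exactly the point you flag. Your construction of $J=Supp(T)\cup\bigl(I\cap V(\mathcal{G}_N(T))\bigr)$, the verification that $J$ is independent, and the count reducing everything to $\vert I\cap V(\mathcal{G}_S(T))\vert<\vert Supp(T)\vert$ are all correct. But that remaining inequality is precisely the statement of the lemma for the forest $\mathcal{G}_S(T)$ (whose core is $Core(T)$), so the reduction does not lower the difficulty---it restates the lemma for S-forests---and the ``direct matching argument'' you invoke to finish is not only unproven but unsound as described. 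The properties you list (every maximum matching saturates the core, $\nu$ equals the core size, the support is independent, the $N$-vertex set is empty) do \emph{not} imply that the support is the unique maximum independent set: take $P_4=s_1c_1c_2s_2$, declare $S=\lbrace s_1,s_2\rbrace$ and $C=\lbrace c_1,c_2\rbrace$; all of your hypotheses hold, yet $\lbrace s_1,c_2\rbrace$ is a maximum independent set meeting $C$. So any correct completion must use the null-space structure itself (for instance, the fact that the equation $\sum_{u\in N(c)}x_u=0$ forces every core vertex of a tree to have at least two support neighbours), not just the matching counts. Also, as a minor point, the saturation of the core by every maximum matching follows from Lemma~\ref{t11} ($EG(T)=Supp(T)$), not from ``the matching number equals the core size.''

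There is a much shorter route that avoids the decomposition entirely. Suppose $I\in\mathcal{I}(T)$; then $K=V(T)\setminus I$ is a minimum vertex cover, and since $T$ is bipartite, K\H{o}nig's theorem gives $\vert K\vert=\nu(T)$. For any maximum matching $M$, the $\vert M\vert$ disjoint edges of $M$ each contain at least one vertex of $K$ and each vertex of $K$ lies in at most one of them, so every vertex of $K$ is saturated by $M$; as $M$ was arbitrary, $K\cap EG(T)=\emptyset$, i.e.\ $Supp(T)=EG(T)\subseteq I$ by Lemma~\ref{t11}. Hence if $c_i\in Core(T)$ were also in $I$, then $I$ would contain $c_i$ together with one of its support neighbours, contradicting independence. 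I would recommend replacing the $\mathcal{G}_S$/uniqueness machinery with this argument, or else supplying a genuine proof of the uniqueness claim for S-trees using their null-space characterization from \cite{tree}.
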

Lemmas \ref{lematec1} and \ref{lematec3} will be used to prove the Theorem \ref{thm1}.

\begin{lemat}
\label{lematec1}
Let $G$ be a unicyclic graph of Type $I$ and $G\lbrace{v}\rbrace$ its pendant tree such that $v\notin{Supp(G\lbrace{v}\rbrace)}$. Then $$\vert{Supp(G\lbrace{v}\rbrace)}\vert{+}\frac{\vert{V(\mathcal{G}_N(G\lbrace{v}\rbrace))}\vert}{2}=\vert{Supp(G\lbrace{v}\rbrace-v)}\vert{+}\frac{\vert{V(\mathcal{G}_N(G\lbrace{v}\rbrace-v))}\vert}{2}.$$
\end{lemat}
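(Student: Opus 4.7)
The plan is to recognize both sides of the identity as the independence number of the (forest) graph, and then to show these two independence numbers are equal.

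First I would apply Lemma \ref{t35} directly to the tree $G\{v\}$: its left-hand side is precisely $\alpha(G\{v\})$. For the right-hand side, I would write the forest $G\{v\}-v$ as its disjoint union of connected components, $G\{v\}-v=\bigsqcup_{i=1}^{k}T_i$. Since the support of a graph and the vertex set of its $N$-graph both decompose as disjoint unions over connected components, and $\alpha$ does too, summing the formula of Lemma \ref{t35} over each $T_i$ gives
$$\vert Supp(G\{v\}-v)\vert+\tfrac{1}{2}\vert V(\mathcal{G}_N(G\{v\}-v))\vert=\sum_{i=1}^{k}\alpha(T_i)=\alpha(G\{v\}-v).$$
So the claim reduces to proving $\alpha(G\{v\})=\alpha(G\{v\}-v)$.

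To establish this equality, I would pass through the matching number. Since $v\notin Supp(G\{v\})$, Lemma \ref{t11} (applied to the tree $G\{v\}$) says $v\notin EG(G\{v\})$, i.e.\ every maximum matching of $G\{v\}$ saturates $v$. If $M$ is such a matching and $\{v,u\}\in M$, then $M\setminus\{\{v,u\}\}$ is a matching in $G\{v\}-v$, giving $\nu(G\{v\}-v)\geq\nu(G\{v\})-1$. Conversely, any matching in $G\{v\}-v$ is a matching in $G\{v\}$, so $\nu(G\{v\})\geq\nu(G\{v\}-v)$; equality here would produce a maximum matching of $G\{v\}$ not saturating $v$, contradicting Lemma \ref{t11}. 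Hence $\nu(G\{v\})=\nu(G\{v\}-v)+1$.

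Finally, I would invoke the Gallai--K\"onig identity $\alpha(F)+\nu(F)=\vert V(F)\vert$ for forests $F$ (which also follows by summing the two formulas in Lemma \ref{t35} and using Lemma \ref{tindependent} to see $Supp(T)\cap Core(T)=\emptyset$, so that $\vert V(T)\vert=\vert Supp(T)\vert+\vert Core(T)\vert+\vert V(\mathcal{G}_N(T))\vert$). Applied to the two forests, this yields
$$\alpha(G\{v\})=\vert V(G\{v\})\vert-\nu(G\{v\})=(\vert V(G\{v\}-v)\vert+1)-(\nu(G\{v\}-v)+1)=\alpha(G\{v\}-v),$$
which combined with the first paragraph closes the proof. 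The only delicate step is the matching identity $\nu(G\{v\})=\nu(G\{v\}-v)+1$, which is exactly where the hypothesis $v\notin Supp(G\{v\})$ enters via Lemma \ref{t11}; everything else is bookkeeping about how $Supp$, $Core$, and $V(\mathcal{G}_N)$ split over connected components of a forest.
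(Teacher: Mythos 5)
Your argument is correct, but the way you establish the key equality $\alpha(G\{v\})=\alpha(G\{v\}-v)$ is genuinely different from the paper's. Both proofs begin identically: Lemma \ref{t35}, applied to the tree $G\{v\}$ and summed over the connected components of the forest $G\{v\}-v$, identifies the two sides of the statement with $\alpha(G\{v\})$ and $\alpha(G\{v\}-v)$ respectively. The paper then finishes in one stroke on the independent-set side: since $v\notin Supp(G\{v\})$, the vertex $v$ lies in $Core(G\{v\})\cup V(\mathcal{G}_N(G\{v\}))$, so Lemmas \ref{t30} and \ref{tcore} produce a maximum independent set $I$ of $G\{v\}$ with $v\notin I$, and this $I$ remains a maximum independent set of $G\{v\}-v$, giving $\alpha(G\{v\})=\alpha(G\{v\}-v)$ directly. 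You instead route through matchings: Lemma \ref{t11} yields $\nu(G\{v\})=\nu(G\{v\}-v)+1$ (this deletion argument is essentially verbatim what the paper uses to prove the companion Lemma \ref{lematec3}), and the K\"onig--Gallai identity $\alpha(F)+\nu(F)=\vert V(F)\vert$ for forests --- which, as you correctly note, also falls out of Lemma \ref{t35} once one knows $Supp(T)\cap Core(T)=\emptyset$ from Lemma \ref{tindependent} and the resulting disjoint partition $V(T)=Supp(T)\cup Core(T)\cup V(\mathcal{G}_N(T))$ --- converts this into the desired equality of independence numbers. Your route is a bit longer, but it has the merit of exhibiting Lemma \ref{lematec1} as the exact dual of Lemma \ref{lematec3} under that partition, so the two lemmas come from a single computation; the paper's route is shorter and stays entirely on the independent-set side, at the cost of invoking the two auxiliary lemmas \ref{t30} and \ref{tcore}. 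Both are sound.
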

\begin{proof}
Consider $G\lbrace{v}\rbrace-v=\bigcup\limits^{k}_{i=1}{T_i}$, where $T_i$ is a connected component of the $G\lbrace{v}\rbrace-v$ for $i=1,\ldots,k$.
Since $v\notin{Supp(G\lbrace{v}\rbrace})$, then $v\in{V(\mathcal{G}_N(G\lbrace{v}\rbrace))}\cup{Core(G\lbrace{v}\rbrace)}$. Using lemmas \ref{t30} and \ref{tcore} we know that there is a $I\in{\mathcal{I}(G\lbrace{v}\rbrace)}$ such that $v\notin{I}$. Note that $I\in{\mathcal{I}(G\lbrace{v}\rbrace-v)}$. Thus using the Lemma \ref{t35} in each $T_i$ we have
\begin{eqnarray*}
\alpha(G\lbrace{v}\rbrace)=\alpha(G\lbrace{v}\rbrace-v)&=&\sum\limits^{k}_{i=1}\alpha(T_i)=\sum\limits^{k}_{i=1}\vert{Supp(T_i)}\vert{+}\frac{\vert{V(\mathcal{G}_N(T_i))}\vert}{2}\nonumber\\
\vert{Supp(G\lbrace{v}\rbrace)}\vert{+}\frac{\vert{V(\mathcal{G}_N(G\lbrace{v}\rbrace))}\vert}{2}&=&\vert{Supp(G\lbrace{v}\rbrace-v)}\vert{+}\frac{\vert{V(\mathcal{G}_N(G\lbrace{v}\rbrace-v))}\vert}{2}.
\end{eqnarray*}
\end{proof}

\begin{lemat}
\label{lematec3}
Let $G$ be a unicyclic graph of Type $I$ and $G\lbrace{v}\rbrace$ its pendant tree such that $v\notin{Supp(G\lbrace{v}\rbrace)}$. Then $$\vert{Core(G\lbrace{v}\rbrace)}\vert{+}\frac{\vert{V(\mathcal{G}_N(G\lbrace{v}\rbrace))}\vert}{2}=\vert{Core(G\lbrace{v}\rbrace-v)}\vert{+}\frac{\vert{V(\mathcal{G}_N(G\lbrace{v}\rbrace-v))}\vert}{2}+1.$$
\end{lemat}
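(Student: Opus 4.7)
The plan is to mimic the argument of Lemma \ref{lematec1} but using the matching number $\nu$ and the neighborhood side of Lemma \ref{t35} instead of $\alpha$. The pivotal input is that $v\notin Supp(G\lbrace v\rbrace)$ forces $v$ to behave very differently from the Type~I analogue for independent sets: by Lemma \ref{t11}, $v\notin Supp(G\lbrace v\rbrace)$ means $v\notin EG(G\lbrace v\rbrace)$, i.e.\ every maximum matching of $G\lbrace v\rbrace$ saturates $v$.

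First I will use this saturation property to prove the matching identity
$$\nu(G\lbrace v\rbrace)=\nu(G\lbrace v\rbrace - v)+1.$$
Pick any $M\in\mathcal{M}(G\lbrace v\rbrace)$; since $v$ is saturated there is a unique edge $\{v,w\}\in M$, and $M\setminus\{\{v,w\}\}$ is a matching of $G\lbrace v\rbrace - v$, giving $\nu(G\lbrace v\rbrace-v)\ge \nu(G\lbrace v\rbrace)-1$. Conversely, any matching of $G\lbrace v\rbrace - v$ is a matching of $G\lbrace v\rbrace$, so $\nu(G\lbrace v\rbrace)\ge \nu(G\lbrace v\rbrace - v)$; strict inequality must hold because otherwise a maximum matching of $G\lbrace v\rbrace-v$ would be a maximum matching of $G\lbrace v\rbrace$ that leaves $v$ unsaturated, contradicting the saturation property. (This is essentially the argument already carried out inside the proof of Lemma \ref{lematec2}.)

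Next, I apply Lemma \ref{t35} to express both matching numbers in terms of core and $N$-vertex sets. Since $G\lbrace v\rbrace$ is a tree, Lemma \ref{t35} gives directly
$$\nu(G\lbrace v\rbrace) = |Core(G\lbrace v\rbrace)| + \tfrac{|V(\mathcal{G}_N(G\lbrace v\rbrace))|}{2}.$$
For the forest $G\lbrace v\rbrace - v = \bigcup_{i=1}^{k}T_i$ I apply Lemma \ref{t35} component-wise and sum, using $\nu(G\lbrace v\rbrace - v)=\sum_i \nu(T_i)$, $Core(G\lbrace v\rbrace-v)=\bigsqcup_i Core(T_i)$, and $V(\mathcal{G}_N(G\lbrace v\rbrace-v))=\bigsqcup_i V(\mathcal{G}_N(T_i))$, obtaining
$$\nu(G\lbrace v\rbrace - v) = |Core(G\lbrace v\rbrace - v)| + \tfrac{|V(\mathcal{G}_N(G\lbrace v\rbrace - v))|}{2}.$$
Substituting both expressions into $\nu(G\lbrace v\rbrace)=\nu(G\lbrace v\rbrace - v)+1$ yields exactly the claimed identity.

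I do not expect a real obstacle here; the only delicate point is justifying the $+1$ jump in the matching number, which rests on Lemma \ref{t11} (support = Edmonds--Gallai set in trees). The bookkeeping for the forest $G\lbrace v\rbrace - v$ is routine because the supports, cores, and $N$-vertex sets of a disjoint union of graphs are the disjoint unions of the corresponding sets of the components.
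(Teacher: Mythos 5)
Your proposal is correct and follows essentially the same route as the paper's proof: both establish $\nu(G\lbrace v\rbrace)=\nu(G\lbrace v\rbrace -v)+1$ from the fact that $v\notin Supp(G\lbrace v\rbrace)$ forces every maximum matching to saturate $v$ (via Lemma \ref{t11}), and then convert both matching numbers using Lemma \ref{t35} applied to $G\lbrace v\rbrace$ and to the components of the forest $G\lbrace v\rbrace -v$. Your two-inequality phrasing of the $+1$ jump is just a cosmetic variant of the paper's contradiction argument.
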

\begin{proof}
Let $M\in\mathcal{M}(G{\lbrace{v}\rbrace})$. As $v\notin{Supp(G{\lbrace{v}\rbrace})}$, by Lemma \ref{t11} we have that $M$ saturates $v$ . Then there is a vertex $u$ in $G{\lbrace{v}\rbrace}-v$ such that $\lbrace{u,v}\rbrace\in{M}$. First, we will show that $M\setminus{\lbrace{\lbrace{u,v}\rbrace}\rbrace}\in\mathcal{M}(G{\lbrace{v}\rbrace}-v)$. Suppose that $M\setminus{\lbrace{\lbrace{u,v}\rbrace}\rbrace}\notin\mathcal{M}(G{\lbrace{v}\rbrace}-v)$. Hence, there is a matching $M^{'}$ in $G{\lbrace{v}\rbrace}-v$ such that $\vert{M^{'}}\vert{>}\vert{M\setminus{\lbrace{\lbrace{u,v}\rbrace}\rbrace}}\vert$.
 Note that $M^{'}$ is a matching in $G{\lbrace{v}\rbrace}$ as well. As $M\in\mathcal{M}(G{\lbrace{v}\rbrace})$ we have that $\vert{M}\vert\geq\vert{M^{'}}\vert$. Thus
\begin{eqnarray}\label{equa7}
\vert{M}\vert-1<\vert{M^{'}}\vert\leq{\vert{M}\vert}.
\end{eqnarray}
$\vert{M}\vert-1$ and $\vert{M}\vert$ are integer numbers, then equation \eqref{equa7} implies that $\vert{M^{'}}\vert={\vert{M}\vert}$, which is a contradiction. Because $M^{'}$ does not saturate $v$ and all maximum matching in $G{\lbrace{v}\rbrace}$ saturates $v$. Therefore, $M\setminus{\lbrace{\lbrace{u,v}\rbrace}\rbrace}\in\mathcal{M}(G{\lbrace{v}\rbrace}-v)$. Now, we have that $\nu(G\lbrace{v}\rbrace)=1+\nu(G\lbrace{v}\rbrace-v)$. Note that, $G\lbrace{v}\rbrace-v=\bigcup\limits^{k}_{i=1}{T_i}$, where $T_i$'s are the connected components of the forest $G\lbrace{v}\rbrace-v$. Using the Lemma \ref{t35} in each $T_i$ and in $G\lbrace{v}\rbrace$ we obtain that
\begin{eqnarray*}
\nu(G\lbrace{v}\rbrace)&=&1+\nu(G\lbrace{v}\rbrace-v)=1+\sum\limits_{i=1}^{k}\nu(T_i)\\
\vert{Core(G\lbrace{v}\rbrace)}\vert{+}\frac{\vert{V(\mathcal{G}_N(G\lbrace{v}\rbrace))}\vert}{2}&=&1+\sum\limits^{k}_{i=1}\vert{Core(T_i)}\vert{+}\frac{\vert{V(\mathcal{G}_N(T_i))}\vert}{2}\\
\vert{Core(G\lbrace{v}\rbrace)}\vert{+}\frac{\vert{V(\mathcal{G}_N(G\lbrace{v}\rbrace))}\vert}{2}&=&1+\vert{Core(G\lbrace{v}\rbrace-v)}\vert{+}\frac{\vert{V(\mathcal{G}_N(G\lbrace{v}\rbrace-v))}\vert}{2}.
\end{eqnarray*}
\end{proof}

In \cite{Maikon}, we obtained closed formulas for the independence and matching numbers of a unicyclic graph $G$ based on the support of its subtrees. In this paper we also present closed formulas for these parameters of $G$, the difference is that we just use  the support of $G$. Theorems \ref{thm1} and \ref{thm2} give a nice way to compute the independence and matching numbers of unicyclic graphs using its Null Decomposition. More precisely, to compute the independence and matching numbers of unicyclic graphs it is necessary to compute its support, core and $N$ -vertices and verify certain properties of these subsets of vertices.

The following theorem gives closed formulas for the matching number of unicyclic graphs.

\begin{teore}\label{thm1}
Let $G$ be a unicyclic graph. Then
\begin{eqnarray*}
\nu{(G)}&=&\vert{Core(G)}\vert{+}\floor*{\frac{\vert{V(\mathcal{G}_N(G))}\vert-\vert{Supp(G)\cap{Core(G)}}\vert}{2}}.
\end{eqnarray*}
\end{teore}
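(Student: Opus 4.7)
The plan is to perform a case analysis across the six disjoint structural classes of unicyclic graphs identified in the paper: the four Type $I$ cases treated by Propositions \ref{ti1}, \ref{ti4}, \ref{ti6}, \ref{ti3}, and the two Type $II$ cases treated by Propositions \ref{ti2} and \ref{ti5}. The starting point in each case is Lemma \ref{propnu1} or \ref{propnu2}, which expresses $\nu(G)$ in terms of the cores and $N$-vertices of the pendant subgraphs, together with the corresponding support-proposition, which tells me how those subgraph quantities glue together into $Core(G)$, $V(\mathcal{G}_N(G))$ and $Supp(G) \cap Core(G)$ of the whole graph.

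A parity observation that I will use repeatedly is that for every tree $T$, $|V(\mathcal{G}_N(T))|$ is even, because by the tree null decomposition of \cite{tree} the graph $\mathcal{G}_N(T)$ is a disjoint union of nonsingular trees, each of which carries a perfect matching. This turns the half-integers appearing in Lemmas \ref{propnu1}, \ref{propnu2} and \ref{t35} into integers and pins down the parity of $|V(\mathcal{G}_N(G))|$ in each case, so that the floor in the statement can be evaluated explicitly.

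For the ``easy'' Type $I$ subcases (Propositions \ref{ti1} and \ref{ti4}) both $|Core(G)|$ and $|V(\mathcal{G}_N(G))|$ split additively over the pendant decomposition, $Supp(G) \cap Core(G) = \emptyset$, and Lemma \ref{propnu1} immediately gives $\nu(G) = |Core(G)| + |V(\mathcal{G}_N(G))|/2$. In Proposition \ref{ti6} the vertex $v$ migrates from $V(\mathcal{G}_N(G\lbrace v \rbrace))$ into $Core(G)$, flipping $|V(\mathcal{G}_N(G))|$ to odd parity, and the algebra works out to $|Core(G)| + (|V(\mathcal{G}_N(G))| - 1)/2$, which is exactly the floor. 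Proposition \ref{ti3} requires one preparatory step: I first apply Lemma \ref{lematec3} to rewrite $|Core(G\lbrace v \rbrace)| + |V(\mathcal{G}_N(G\lbrace v \rbrace))|/2$ as the analogous expression for $G\lbrace v \rbrace - v$ plus one, and the extra $+1$ is absorbed by the new cycle vertex $v \in Core(G)$. The Type $II$ case of Proposition \ref{ti2} is analogous: combining Lemma \ref{propnu2} with the identifications from Proposition \ref{ti2} and the fact that $\sum_i |V(\mathcal{G}_N(T_i))|$ is even yields the formula directly, with $\lfloor |V(C)|/2 \rfloor$ fusing into $\lfloor |V(\mathcal{G}_N(G))|/2 \rfloor$.

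The hard part is Proposition \ref{ti5} ($|V(C)| = 4t$), which is the first case where $Supp(G) \cap Core(G)$ is nonempty (indeed equal to $V(C)$) and where Lemma \ref{propnu2} decomposes $\nu(G)$ in terms of the forest $G - C$ while Proposition \ref{ti5} decomposes $Core(G)$ and $V(\mathcal{G}_N(G))$ in terms of the pendant trees $G\lbrace v \rbrace$. To bridge the two languages I will argue, for each $v \in V(C)$, that since $v \in Supp(G\lbrace v \rbrace) = EG(G\lbrace v \rbrace)$ by Lemma \ref{t11}, the vertex $v$ is unsaturated by some maximum matching of $G\lbrace v \rbrace$, whence $\nu(G\lbrace v \rbrace) = \nu(G\lbrace v \rbrace - v) = \sum_{T_i \subseteq G\lbrace v \rbrace - v} \nu(T_i)$. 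Applying Lemma \ref{t35} to each $G\lbrace v \rbrace$, summing over $v \in V(C)$, and invoking Proposition \ref{ti5}(ii)--(iii) to collect terms should yield
\[
\sum_i \nu(T_i) = \bigl(|Core(G)| - |V(C)|\bigr) + \frac{|V(\mathcal{G}_N(G))|}{2}.
\]
Substituting into Lemma \ref{propnu2} gives $\nu(G) = |Core(G)| + (|V(\mathcal{G}_N(G))| - |V(C)|)/2$; the parity remark makes this an integer, and $|Supp(G) \cap Core(G)| = |V(C)|$ identifies it with the claimed floor expression, completing the argument.
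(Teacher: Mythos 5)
Your proposal is correct and follows essentially the same route as the paper: a six-way case split over Propositions \ref{ti1}, \ref{ti4}, \ref{ti6}, \ref{ti3}, \ref{ti2}, \ref{ti5}, seeded by Lemma \ref{propnu1} or \ref{propnu2}, with Lemma \ref{lematec3} handling the \ref{ti3} case and the identity $\nu(G\lbrace v\rbrace)=\nu(G\lbrace v\rbrace - v)$ together with Lemma \ref{t35} bridging the $G-C$ and pendant-tree decompositions in the \ref{ti5} case. Your explicit parity remark that $|V(\mathcal{G}_N(T))|$ is even for trees is a small clarification the paper leaves implicit, but the argument is otherwise the same.
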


\begin{proof}
Notice that, since $G$ is a unicyclic graph, then, $G$ must satisfy just the conditions of one of the following propositions: \ref{ti1}, \ref{ti4}, \ref{ti6}, \ref{ti3}, \ref{ti2} or \ref{ti5}. That is, we can divide the unicyclic graphs into six disjoint sets.

\noindent \textbf{Case 1:} Suppose that $G$ satisfies the conditions of Proposition \ref{ti1}.

\noindent By Proposition \ref{ti1} we have that

\begin{eqnarray*}
Core(G)&=&Core(G\lbrace{v}\rbrace)\bigcup{Core(G-G\lbrace{v}\rbrace)},\\
V(\mathcal{G}_N(G))&=&V(\mathcal{G}_N(G\lbrace{v}\rbrace))\bigcup{V(\mathcal{G}_N(G-G\lbrace{v}\rbrace)}\mbox{ and }\\
\emptyset&=&Supp(G)\cap{Core(G)}.
\end{eqnarray*}

Thus, using the Lemma \ref{propnu1}, the matching number of $G$ is given by
\begin{eqnarray*}
\nu(G)&=&\vert{Core(G\lbrace{v}\rbrace)}\vert{+}\vert{Core(G-G\lbrace{v}\rbrace)}\vert{+}\frac{\vert{V(\mathcal{G}_N(G\lbrace{v}\rbrace))}\vert+\vert{V(\mathcal{G}_N(G-G\lbrace{v}\rbrace))}\vert}{2}\\
     &=&\vert{Core(G)}\vert+\frac{\vert{V(\mathcal{G}_N(G))}\vert}{2}=\vert{Core(G)}\vert+\floor*{\frac{\vert{V(\mathcal{G}_N(G))}\vert}{2}}\\
     &=&\vert{Core(G)}\vert+\floor*{\frac{\vert{V(\mathcal{G}_N(G))}\vert-\vert{Supp(G)\cap{Core(G)}}\vert}{2}}.
\end{eqnarray*}

\noindent\textbf{Case 2:} Suppose that $G$ satisfies the conditions of Proposition \ref{ti4}.

This case is analogous to Case 1.

\noindent\textbf{Case 3:}  Suppose that $G$ satisfies the conditions of Proposition \ref{ti6}

Then by Proposition \ref{ti6}, we have that
\begin{eqnarray*}
Core(G)&=&Core(G\lbrace{v}\rbrace)\bigcup{Core(G-G\lbrace{v}\rbrace)}\bigcup{\lbrace{v}\rbrace}\mbox{ and}\\
V(\mathcal{G}_N(G))&=&\left(V(\mathcal{G}_N(G\lbrace{v}\rbrace))\setminus{\lbrace{v}\rbrace}\right)\bigcup{V(\mathcal{G}_N(G-G\lbrace{v}\rbrace))}\\
\emptyset&=&Supp(G)\cap{Core(G)}.
\end{eqnarray*}

And, by Lemma \ref{propnu1}, we have that the matching number of $G$ is the following
\begin{eqnarray*}
\nu(G)&=&\vert{Core(G\lbrace{v}\rbrace)}\vert{+}\vert{Core(G-G\lbrace{v}\rbrace)}\vert{+}\frac{\vert{V(\mathcal{G}_N(G\lbrace{v}\rbrace))}\vert+\vert{V(\mathcal{G}_N(G-G\lbrace{v}\rbrace))}\vert}{2}\\
     &=&\vert{Core(G)}\vert-1+\frac{\vert{V(\mathcal{G}_N(G))}\vert+1}{2}=\vert{Core(G)}\vert+\frac{\vert{V(\mathcal{G}_N(G))}\vert-1}{2}\\
     &=&\vert{Core(G)}\vert{+}\floor*{\frac{\vert{V(\mathcal{G}_N(G))}\vert}{2}}\\
     &=&\vert{Core(G)}\vert+\floor*{\frac{\vert{V(\mathcal{G}_N(G))}\vert-\vert{Supp(G)\cap{Core(G)}}\vert}{2}}.  
\end{eqnarray*}

\noindent\textbf{Case 4:} Suppose that $G$ satisfies the conditions of Proposition \ref{ti3}.

By Proposition \ref{ti3} we have that
\begin{eqnarray*}
Core(G)&=&Core(G\lbrace{v}\rbrace-v)\bigcup{Core(G-G\lbrace{v}\rbrace)\bigcup\lbrace{v}\rbrace},\\
V(\mathcal{G}_N(G))&=&V(\mathcal{G}_N(G\lbrace{v}\rbrace-v))\bigcup{V(\mathcal{G}_N(G-G\lbrace{v}\rbrace))}\mbox{ and }\\
\emptyset&=&Supp(G)\cap{Core(G)}.
\end{eqnarray*}

And, using the Lemmas \ref{propnu1} and \ref{lematec3} we have that the matching number of $G$ is given by

\begin{eqnarray*}
\nu(G)&=&\vert{Core(G\lbrace{v}\rbrace)}\vert{+}\vert{Core(G-G\lbrace{v}\rbrace)}\vert{+}\frac{\vert{V(\mathcal{G}_N(G\lbrace{v}\rbrace))}\vert+\vert{V(\mathcal{G}_N(G-G\lbrace{v}\rbrace))}\vert}{2}\\
&=&\vert{Core(G\lbrace{v}\rbrace-v)}\vert{+}\vert{Core(G-G\lbrace{v}\rbrace)}\vert{+}\frac{\vert{V(\mathcal{G}_N(G\lbrace{v}\rbrace-v))}\vert}{2}\\
&&+\frac{\vert{V(\mathcal{G}_N(G-G\lbrace{v}\rbrace))}\vert}{2}+1\\
&=&\vert{Core(G)}\vert+\frac{\vert{V(\mathcal{G}_N(G))}\vert}{2}=\vert{Core(G)}\vert+\floor*{\frac{\vert{V(\mathcal{G}_N(G))}\vert}{2}}\\
&=&\vert{Core(G)}\vert+\floor*{\frac{\vert{V(\mathcal{G}_N(G))}\vert-\vert{Supp(G)\cap{Core(G)}}\vert}{2}}.
\end{eqnarray*}

\noindent\textbf{Case 5:} Suppose that $G$ satisfies the conditions of Proposition \ref{ti2}.

By Proposition \ref{ti2} we have that
\begin{eqnarray*}
Core(G)&=&Core(G-C)=\bigcup\limits^{k}_{i=1}{Core(T_i)}\mbox{ and }\\
V(\mathcal{G}_N(G))&=&V(C)\bigcup\left(\bigcup\limits^{k}_{i=1}{V(\mathcal{G}_N(T_i)})\right)\\
\emptyset&=&Supp(G)\cap{Core(G)}.
\end{eqnarray*}

And, by Lemma \ref{propnu2}, the matching number of $G$ is the following
\begin{eqnarray*}
\nu{(G)}&=&\floor*{\frac{\vert{V(C)}\vert}{2}}+\sum\limits^{k}_{i=1}\vert{Core(T_i)}\vert{+}\frac{\vert{V(\mathcal{G}_N(T_i))}\vert}{2}\\
   &=&\vert{Core(G)}\vert{+}\floor*{\frac{\vert{V(\mathcal{G}_N(G))}\vert}{2}}\\
   &=&\vert{Core(G)}\vert+\floor*{\frac{\vert{V(\mathcal{G}_N(G))}\vert-\vert{Supp(G)\cap{Core(G)}}\vert}{2}}.       
\end{eqnarray*}

\noindent\textbf{Case 6:} Suppose that $G$ satisfies the conditions of Proposition \ref{ti5}.

By Proposition \ref{ti5}, we know that
\begin{eqnarray*}
Core(G)&=&V(C)\cup\left(\bigcup\limits_{v\in{V(C)}}Core(G\lbrace{v}\rbrace)\right),\\ V(\mathcal{G}_N(G))&=&\bigcup\limits_{v\in{V(C)}}V(\mathcal{G}_N(G\lbrace{v}\rbrace)\mbox{ and}\\ V(C)&=&Supp(G)\cap{Core(G)}.
\end{eqnarray*}
Consider that $G-C=\bigcup\limits^{k}_{i=1}{T_i}$, where the $T_i$'s are the connected components of $G-C$.
Now, note that $\nu(G\lbrace{v}\rbrace)=\nu(G\lbrace{v}\rbrace-v)$, because $v\in{Supp{(G\lbrace{v}\rbrace})}$.
And, using Lemma \ref{propnu2}, we obtain that the matching number is satisfies
\begin{eqnarray*}
\nu(G)&=&\floor*{\frac{\vert{V(C)}\vert}{2}}+\sum\limits^{k}_{i=1}\vert{Core(T_i)}\vert+\frac{\vert{V(\mathcal{G}_N(T_i))}\vert}{2}\\
      &=&\frac{\vert{V(C)}\vert}{2}+\sum\limits^{k}_{i=1}\nu(T_i)=-\frac{\vert{V(C)}\vert}{2}+\vert{V(C)}\vert+\sum\limits^{k}_{i=1}\nu(T_i)\\
      &=&-\frac{\vert{V(C)}\vert}{2}+\vert{V(C)}\vert+\sum\limits_{{v}\in{V(C)}}\nu(G\lbrace{v}\rbrace-v)\\
      &=&-\frac{\vert{V(C)}\vert}{2}+\vert{V(C)}\vert+\sum\limits_{{v}\in{V(C)}}\nu(G\lbrace{v}\rbrace)\\
       &=&-\frac{\vert{V(C)}\vert}{2}+\vert{V(C)}\vert+\sum\limits_{{v}\in{V(C)}}\vert{Core(G\lbrace{v}\rbrace)}\vert+\frac{\vert{V(\mathcal{G}_N(G\lbrace{v}\rbrace))}\vert}{2}\\
       &=&\vert{Core(G)}\vert{+}\frac{\vert{V(\mathcal{G}_N(G))}\vert-\vert{Supp(G)\cap{Core(G)}}\vert}{2}\\
      &=&\vert{Core(G)}\vert{+}\floor*{\frac{\vert{V(\mathcal{G}_N(G))}\vert-\vert{Supp(G)\cap{Core(G)}}\vert}{2}}.
\end{eqnarray*}
\end{proof}

\begin{lemat}\label{lei}
If $T$ is a tree, then $\displaystyle{\bigcap_{I\in\mathcal{I}(T)}I}=Supp(T)$.
\end{lemat}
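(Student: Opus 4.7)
The plan is to prove the set equality by double inclusion, leveraging the partition $V(T) = Supp(T) \sqcup Core(T) \sqcup V(\mathcal{G}_N(T))$ supplied by the null decomposition. This is in fact a disjoint partition for trees: $V(\mathcal{G}_S(T)) = N[Supp(T)] = Supp(T) \cup Core(T)$ by definition, $V(\mathcal{G}_N(T))$ is its complement, and $Supp(T) \cap Core(T) = \emptyset$ since $Supp(T)$ is independent (Lemma \ref{tindependent}) and therefore disjoint from its own neighborhood.

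For the inclusion $\bigcap_{I \in \mathcal{I}(T)} I \subseteq Supp(T)$, take $v$ that belongs to every maximum independent set. Lemma \ref{tcore} forbids any $I \in \mathcal{I}(T)$ from meeting $Core(T)$, so $v \notin Core(T)$. Lemma \ref{t30} produces some $I_2 \in \mathcal{I}(T)$ avoiding $v$ whenever $v \in V(\mathcal{G}_N(T))$, which is incompatible with the assumption on $v$. The partition then forces $v \in Supp(T)$.

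For the reverse inclusion $Supp(T) \subseteq \bigcap_{I \in \mathcal{I}(T)} I$, I would appeal to Lemma \ref{t11}: if $v \in Supp(T) = EG(T)$, then some maximum matching $M$ does not saturate $v$. Viewing $M$ as a matching of $T - v$ gives $\nu(T-v) \geq |M| = \nu(T)$, and the reverse inequality is trivial, so $\nu(T-v) = \nu(T)$. Since $T$ is bipartite, the K\"onig--Egerv\'ary identity $\alpha + \nu = |V|$ applies to both $T$ and $T-v$, yielding $\alpha(T-v) = \alpha(T) - 1$. If some $I \in \mathcal{I}(T)$ omitted $v$, then $I$ would be an independent set of size $\alpha(T)$ living in $T - v$, contradicting $\alpha(T-v) = \alpha(T) - 1$. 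Hence every $I \in \mathcal{I}(T)$ contains $v$.

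The main obstacle is this reverse inclusion; the approach above sidesteps finer structural analysis by invoking K\"onig--Egerv\'ary. An alternative purely internal route uses Lemma \ref{t35} and the structural fact (from \cite{tree}) that every connected component of $\mathcal{G}_N(T)$ admits a perfect matching, so $\alpha(\mathcal{G}_N(T)) = |V(\mathcal{G}_N(T))|/2$; then from $I \cap Core(T) = \emptyset$ and $|I| = |Supp(T)| + |V(\mathcal{G}_N(T))|/2$ one deduces $|I \cap Supp(T)| \geq |Supp(T)|$, forcing $Supp(T) \subseteq I$. The K\"onig route is shorter, so I would adopt it as the primary argument.
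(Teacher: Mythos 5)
Your proof is correct. The forward inclusion $\bigcap_{I\in\mathcal{I}(T)}I\subseteq Supp(T)$ is exactly the paper's argument: the disjoint partition $V(T)=Supp(T)\cup Core(T)\cup V(\mathcal{G}_N(T))$ together with Lemmas \ref{tcore} and \ref{t30} rules out the last two cells. For the reverse inclusion you diverge from the paper. The paper argues by direct augmentation: if some $J\in\mathcal{I}(T)$ omitted $v\in Supp(T)$, then $J$ avoids $Core(T)$ by Lemma \ref{tcore}, so $J\subseteq Supp(T)\cup V(\mathcal{G}_N(T))$; since $N(v)\subseteq Core(T)$, the vertex $v$ has no neighbor in $J$, so $J\cup\lbrace v\rbrace$ is a strictly larger independent set, a contradiction. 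You instead pass through matchings: Lemma \ref{t11} gives a maximum matching missing $v$, whence $\nu(T-v)=\nu(T)$, and the K\"onig--Egerv\'ary identity $\alpha+\nu=\vert V\vert$ on the bipartite graphs $T$ and $T-v$ yields $\alpha(T-v)=\alpha(T)-1$, so no maximum independent set can live inside $T-v$. Both are sound. The paper's route is self-contained within the null-decomposition machinery already set up and needs no bipartiteness; yours imports a classical external theorem but is arguably cleaner, avoids the (slightly glossed-over in the paper) verification that $v$ is non-adjacent to the $N$-vertices of $J$, and makes transparent why the statement is really the independent-set mirror of $EG(T)=Supp(T)$.
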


\begin{proof}
Given a vertex $v\in{\displaystyle{\bigcap_{I\in\mathcal{I}(T)}I}}$. If $v\in{Core(T)\cup{V(\mathcal{G}_N}(T))}$, then by Lemmas \ref{t30} and \ref{tcore} there is a $J\in{\mathcal{I}(T)}$ such that $v\notin{J}$, which is a contradiction, because $v\in{\displaystyle{\bigcap_{I\in\mathcal{I}(T)}I}}$. Therefore, $v\in{Supp(T)}.$

Now, given a vertex $v\in{Supp(T)}$. If  $v\notin{\displaystyle{\bigcap_{I\in\mathcal{I}(T)}I}}$, then there is a maximum independent set $J\in{\mathcal{I}(T)}$ such that $v\notin{J}$. Note that, given $c\in{Core(T)}$ then by Lemma \ref{tcore} we have that $c\notin{J}$. Thus, $J=\lbrace{s_1,s_2,\ldots,s_k}\rbrace\cup\lbrace{n_1,n_2,\ldots,n_r}\rbrace$, where for all $i$ we have that $s_{i}\in{Supp(T)}$ and $n_{i}\in{V(\mathcal{G}_N}(T))$. Therefore, $J\cup{\lbrace{v}\rbrace}$ is an independent set of $T$, because $\lbrace{s_1,s_2,\ldots,s_k}\rbrace\cup\lbrace{v}\rbrace\subseteq{Supp(T)}$ and by Lemma \ref{tindependent} $Supp(T)$ is an independent set of $T$. Moreover, $\vert{J\cup\lbrace{v}\rbrace}\vert=\vert{J}\vert+1>\vert{J}\vert$, which is a contradiction, because $J\in{\mathcal{I}(T)}$. Hence, $v\in{\displaystyle{\bigcap_{I\in\mathcal{I}(T)}I}}$.
\end{proof}

The Lemma \ref{lemac4ti5} will be used to prove the Theorem \ref{thm2}.

\begin{lemat}
\label{lemac4ti5}
Let $G$ be a unicyclic graph and $C$ its cycle. Consider that $G-C=\bigcup\limits^{k}_{i=1}{T_i}$, where the $T_i$'s are the connected components of $G-C$. If $G$ is a unicyclic graph of Type $II$, then
\begin{eqnarray*}
\sum\limits_{v\in{V(C)}}\vert{Supp(G\lbrace{v}\rbrace)}\vert+\frac{\vert{V(\mathcal{G}_N(G\lbrace{v}\rbrace))}\vert}{2}=\vert{V(C)}\vert+\sum\limits^{k}_{i=1}\vert{Supp(T_i)}\vert{+}\frac{\vert{V(\mathcal{G}_N(T_i))}\vert}{2}.
\end{eqnarray*}
\end{lemat}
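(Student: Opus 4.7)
\noindent\textbf{Proposal for the proof of Lemma \ref{lemac4ti5}.}

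The plan is to translate both sides of the identity into independence numbers via Lemma \ref{t35}, and then to exploit the fact that $G$ is of Type $II$ to control $\alpha(G\{v\})$ for each $v \in V(C)$. Concretely, by Lemma \ref{t35} applied to each tree $G\{v\}$ (for $v \in V(C)$) and to each component $T_i$, the claimed identity is equivalent to
\[
\sum_{v \in V(C)} \alpha(G\{v\}) \;=\; \vert V(C)\vert + \sum_{i=1}^{k} \alpha(T_i).
\]
So it suffices to prove this reformulation.

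First I would show that, for every $v \in V(C)$, $\alpha(G\{v\}) = 1 + \alpha(G\{v\}-v)$. Since $G$ is Type $II$, Lemma \ref{c3} gives $v \in Supp(G\{v\})$. Applying Lemma \ref{lei} to the tree $G\{v\}$, we get that $v$ belongs to every maximum independent set of $G\{v\}$. Pick any $I \in \mathcal{I}(G\{v\})$; then $v \in I$ and $I \setminus \{v\}$ is an independent set of $G\{v\} - v$, yielding $\alpha(G\{v\} - v) \geq \alpha(G\{v\}) - 1$. Conversely, any independent set of $G\{v\}-v$ is also independent in $G\{v\}$, so $\alpha(G\{v\} - v) \leq \alpha(G\{v\})$; equality here would force the existence of a maximum independent set of $G\{v\}$ not containing $v$, contradicting Lemma \ref{lei}. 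This pins down $\alpha(G\{v\}-v) = \alpha(G\{v\})-1$.

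Next, I would observe that, since $G$ is unicyclic, each component $T_i$ of $G - C$ has exactly one neighbour in $V(C)$: if some $T_i$ were attached to two distinct vertices $u,w \in V(C)$, a path in $T_i$ from $u$ to $w$ together with an arc of $C$ would produce a second cycle. Hence, for every $v \in V(C)$, the forest $G\{v\} - v$ is exactly the disjoint union of those components $T_i$ attached to $v$, and these families partition $\{T_1,\ldots,T_k\}$ as $v$ ranges over $V(C)$. Therefore
\[
\sum_{v \in V(C)} \alpha(G\{v\}-v) \;=\; \sum_{i=1}^{k} \alpha(T_i),
\]
and combining with the identity from the previous paragraph gives
\[
\sum_{v \in V(C)} \alpha(G\{v\}) \;=\; \sum_{v \in V(C)} \bigl(1 + \alpha(G\{v\}-v)\bigr) \;=\; \vert V(C)\vert + \sum_{i=1}^{k} \alpha(T_i),
\]
which is the desired identity.

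The only delicate point is the step $\alpha(G\{v\}) = 1 + \alpha(G\{v\}-v)$: the ``$\geq$'' direction is routine, and what really drives the proof is the ``$\leq$'' direction, which requires knowing that $v$ is forced into every maximum independent set of $G\{v\}$. This is exactly what Lemmas \ref{c3} and \ref{lei} provide, so once they are in place the remaining bookkeeping is straightforward.
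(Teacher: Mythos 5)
Your proposal is correct and follows essentially the same route as the paper: both convert the identity into independence numbers via Lemma \ref{t35}, use Lemma \ref{c3} together with Lemma \ref{lei} to force $v$ into every maximum independent set of $G\lbrace{v}\rbrace$ and hence obtain $\alpha(G\lbrace{v}\rbrace)=1+\alpha(G\lbrace{v}\rbrace-v)$, and then sum over $V(C)$ using $\bigcup_{v\in V(C)}(G\lbrace{v}\rbrace-v)=\bigcup_{i=1}^{k}T_i$. The only cosmetic difference is that you make explicit the (correct) observation that each $T_i$ attaches to exactly one cycle vertex, which the paper leaves implicit.
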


\begin{proof}
Given $I_v\in\mathcal{I}(G\lbrace{v}\rbrace)$ and since that $G$ is a unicyclic graph of Type $II$ we have that $v\in{Supp(G\lbrace{v}\rbrace)}$. Using the Lemma \ref{lei} we have that $$\displaystyle{Supp(G\lbrace{v}\rbrace)=\bigcap_{I\in\mathcal{I}(G\lbrace{v}\rbrace)}I,}$$
thus $v\in{I_v}$. We will show that $I_v-\lbrace{v}\rbrace\in{\mathcal{I}(G\lbrace{v}\rbrace-v)}$.
Suppose that $I_v-\lbrace{v}\rbrace\notin{\mathcal{I}(G\lbrace{v}\rbrace-v)}$. Therefore, there is an independent set $J$ of $G\lbrace{v}\rbrace-v$ such that
\begin{eqnarray}\label{equa10}
\vert{J}\vert>\vert{I_v-\lbrace{v}\rbrace}\vert=\vert{I_v}\vert-1.
\end{eqnarray}

As $G\lbrace{v}\rbrace-v$ is a subgraph of $G\lbrace{v}\rbrace$, we have that $J$ is also independent set of $G\lbrace{v}\rbrace $. Then

\begin{eqnarray}
\label{equa11}
\vert{J}\vert\leq\vert{I_v}\vert.
\end{eqnarray}
Using \eqref{equa10} and \eqref{equa11} we obtain that $\vert{I_v}\vert\geq\vert{J}\vert>\vert{I_v}\vert-1$. It implies that $\vert{J}\vert=\vert{I_v}\vert$, thus  $J\in\mathcal{I}(G\lbrace{v}\rbrace)$. Which is a contradiction, because $v\notin{J}$ and we know that $v$ is in every maximum independent set of $G\lbrace{v}\rbrace$. Then, we have that $I_v-\lbrace{v}\rbrace\in{\mathcal{I}(G\lbrace{v}\rbrace-v)}$. Note that  $\bigcup\limits_{v\in{V(C)}}G\lbrace{v}\rbrace-v=\bigcup\limits^{k}_{i=1}{T_i}$. By Lemma \ref{t35} we have that $$\alpha{(T_i)}=\vert{Supp(T_i)}\vert{+}\frac{\vert{V(\mathcal{G}_N(T_i))}\vert}{2}\mbox{ and }\alpha(G\lbrace{v}\rbrace)=\vert{Supp(G\lbrace{v}\rbrace)}\vert{+}\frac{\vert{V(\mathcal{G}_N(G\lbrace{v}\rbrace))}\vert}{2}$$
where $i\in\lbrace{1,\ldots,k}\rbrace$ and $v\in{V(C)}$. Then
\begin{eqnarray*}
\sum\limits_{v\in{V(C)}}-1+\alpha(G\lbrace{v}\rbrace)=\sum\limits_{v\in{V(C)}}\alpha(G\lbrace{v}\rbrace-v)&=&\sum\limits^{k}_{i=1}\alpha(T_i)\\
 \sum\limits_{v\in{V(C)}}-1+\sum\limits_{v\in{V(C)}}\alpha(G\lbrace{v}\rbrace)&=&\sum\limits^{k}_{i=1}\alpha(T_i)\\
 -\vert{V(C)}\vert+\sum\limits_{v\in{V(C)}}\vert{Supp(G\lbrace{v}\rbrace)}\vert{+}\frac{\vert{V(\mathcal{G}_N(G\lbrace{v}\rbrace))}\vert}{2}&=&\sum\limits^{k}_{i=1}\vert{Supp(T_i)}\vert{+}\frac{\vert{V(\mathcal{G}_N(T_i))}\vert}{2}.
\end{eqnarray*}
\end{proof}

\begin{teore}\label{thm2}
Let $G$ be a unicyclic graph and $C$ its cycle. If $V(C)\subseteq{V(\mathcal{G}_N(G))}$, then
\begin{eqnarray*}
\alpha{(G)}&=&\vert{Supp(G)}\vert{+}\floor*{\frac{\vert{V(\mathcal{G}_N(G))}\vert}{2}},
\end{eqnarray*}

\noindent otherwise, 

\begin{eqnarray*}
\alpha{(G)}&=&\vert{Supp(G)}\vert{+}\ceil*{\frac{\vert{V(\mathcal{G}_N(G))}\vert-\vert{Supp(G)\cap{Core(G)}}\vert}{2}}.
\end{eqnarray*}
\end{teore}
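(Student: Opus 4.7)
The plan is to follow the same six-case partition used in the proof of Theorem \ref{thm1}: I will split unicyclic graphs according to which of Propositions \ref{ti1}, \ref{ti4}, \ref{ti6}, \ref{ti3} (the four Type $I$ sub-cases) or \ref{ti2}, \ref{ti5} (the two Type $II$ sub-cases) applies, and in each case express $\alpha(G)$ via the appropriate counting lemma \ref{propalpha1} or \ref{propalpha2}, combined with the support/core/$N$-vertex structure from the relevant proposition. The key arithmetic to keep track of is the parity of $|V(\mathcal{G}_N(G))|$ and whether $|Supp(G) \cap Core(G)|$ equals $0$ or $|V(C)|$, so that I can decide when floor and ceiling coincide.

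First, for Cases 1, 2 and 4 (Propositions \ref{ti1}, \ref{ti4}, \ref{ti3}), the support, core and $N$-vertex sets decompose disjointly over $G\{v\}$ (or $G\{v\}-v$, using Lemma \ref{lematec1} in Case 4) and $G - G\{v\}$, and $Supp(G)\cap Core(G)=\emptyset$. Lemma \ref{propalpha1} then collapses to $\alpha(G) = |Supp(G)| + |V(\mathcal{G}_N(G))|/2$; since $G\{v\}$ is a tree and $G - G\{v\}$ is a forest, Lemma \ref{t35} forces $|V(\mathcal{G}_N(G))|$ to be even, so both the floor and the ceiling formulas (with $|Supp(G)\cap Core(G)|=0$) collapse to $|Supp(G)| + |V(\mathcal{G}_N(G))|/2$ and the theorem holds regardless of which sub-case $V(C)\subseteq V(\mathcal{G}_N(G))$ falls into. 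In Case 3 (Proposition \ref{ti6}) the vertex $v$ moves from $V(\mathcal{G}_N(G\{v\}))$ into $Core(G)$, so $|V(\mathcal{G}_N(G))|$ drops by $1$ relative to the disjoint union and becomes odd; Lemma \ref{propalpha1} now yields $\alpha(G)=|Supp(G)| + (|V(\mathcal{G}_N(G))|+1)/2 = |Supp(G)| + \lceil |V(\mathcal{G}_N(G))|/2 \rceil$, and since $v\in Core(G)$ forces $V(C)\not\subseteq V(\mathcal{G}_N(G))$, the ceiling formula is the correct branch.

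Case 5 (Proposition \ref{ti2}) is a direct application of Lemma \ref{propalpha2}: combining $|V(\mathcal{G}_N(G))| = |V(C)| + \sum_i |V(\mathcal{G}_N(T_i))|$ with the fact that each $|V(\mathcal{G}_N(T_i))|$ is even (Lemma \ref{t35}) gives $\alpha(G) = |Supp(G)| + \lfloor |V(C)|/2 \rfloor + \sum_i |V(\mathcal{G}_N(T_i))|/2 = |Supp(G)| + \lfloor |V(\mathcal{G}_N(G))|/2 \rfloor$, since a quick parity check shows the two halves always add correctly whether $|V(C)|$ is even or odd. Proposition \ref{ti2}(iii) supplies $V(C)\subseteq V(\mathcal{G}_N(G))$, so the floor branch applies.

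The main obstacle I anticipate is Case 6 (Proposition \ref{ti5}), the only case in which $Supp(G)\cap Core(G)\neq\emptyset$ and in which one must pass from the tree components $T_i$ of $G-C$ to the pendant trees $G\{v\}$, $v\in V(C)$. The plan is to combine Lemma \ref{propalpha2}, which gives $\alpha(G) = |V(C)|/2 + \sum_i \alpha(T_i)$ because $|V(C)|=4t$, with Lemma \ref{lemac4ti5} to rewrite $\sum_i (|Supp(T_i)|+|V(\mathcal{G}_N(T_i))|/2)$ as $\sum_{v\in V(C)} (|Supp(G\{v\})| + |V(\mathcal{G}_N(G\{v\}))|/2) - |V(C)|$; Proposition \ref{ti5}(i),(iii) then identifies these disjoint unions with $|Supp(G)| + |V(\mathcal{G}_N(G))|/2$, producing $\alpha(G) = |Supp(G)| + (|V(\mathcal{G}_N(G))| - |V(C)|)/2$. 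By Proposition \ref{ti5}(iv), $|Supp(G)\cap Core(G)| = |V(C)|$, and both $|V(\mathcal{G}_N(G))|$ and $|V(C)|=4t$ are even, so the expression is already an integer and the ceiling is cosmetic; meanwhile Proposition \ref{ti5}(i) gives $V(C)\subseteq Supp(G)$, which blocks $V(C)\subseteq V(\mathcal{G}_N(G))$ and confirms that the ceiling branch of the theorem is the one to apply.
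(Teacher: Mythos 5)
Your proposal is correct and follows essentially the same route as the paper's own proof: the same six-case split according to Propositions \ref{ti1}, \ref{ti4}, \ref{ti6}, \ref{ti3}, \ref{ti2}, \ref{ti5}, with Lemmas \ref{propalpha1}, \ref{propalpha2}, \ref{lematec1} and \ref{lemac4ti5} doing the same work in the same places. The only (harmless) difference is organizational --- you decide the floor-versus-ceiling branch case by case via parity of $\vert{V(\mathcal{G}_N(G))}\vert$, while the paper first splits on whether $V(C)\subseteq{V(\mathcal{G}_N(G))}$ and then runs the sub-cases.
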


\begin{proof}
 If $V(C)\subseteq{V(\mathcal{G}_N(G))}$, then $G$ can not satisfy the conditions of propositions \ref{ti4},  \ref{ti3}, \ref{ti6} and \ref{ti5}, because in propositions \ref{ti4}, \ref{ti3}, \ref{ti6} and \ref{ti5} we have that $V(C)\nsubseteq{V(\mathcal{G}_N(G))}$. 
 
\noindent\textbf{Case 1:} Suppose that $G$ satisfies the conditions of Proposition \ref{ti1}.

\noindent By Proposition \ref{ti1} we have that

\begin{eqnarray*}
Supp(G)&=&Supp(G\lbrace{v}\rbrace)\bigcup{Supp(G-G\lbrace{v}\rbrace)}\mbox{ and }\\
V(\mathcal{G}_N(G))&=&V(\mathcal{G}_N(G\lbrace{v}\rbrace))\bigcup{V(\mathcal{G}_N(G-G\lbrace{v}\rbrace)}.
\end{eqnarray*}

Using the Lemma \ref{propalpha1}, we can say that the independence number of $G$ is
\begin{eqnarray*}
\alpha{(G)}&=&\vert{Supp(G\lbrace{v}\rbrace)}\vert+\vert{Supp(G-G\lbrace{v}\rbrace)}\vert{+}\frac{\vert{V(\mathcal{G}_N(G\lbrace{v}\rbrace))}\vert{+}\vert{V(\mathcal{G}_N(G-G\lbrace{v}\rbrace))}\vert}{2}\\
&=&\vert{Supp(G)}\vert{+}\frac{\vert{V(\mathcal{G}_N(G))}\vert}{2}=\vert{Supp(G)}\vert{+}\floor*{\frac{\vert{V(\mathcal{G}_N(G))}\vert}{2}}.
\end{eqnarray*}

\noindent\textbf{Case 2:} Suppose that $G$ satisfies the conditions of Proposition \ref{ti2}.

\noindent By Proposition \ref{ti2} we have that
\begin{eqnarray*}
Supp(G)&=&Supp(G-C)=\bigcup\limits^{k}_{i=1}{Supp(T_i)}\mbox{ and }\\
V(\mathcal{G}_N(G))&=&V(C)\bigcup\left(\bigcup\limits^{k}_{i=1}{V(\mathcal{G}_N(T_i)})\right).
\end{eqnarray*}

Using the Lemma \ref{propalpha2} we have that the independence number of $G$ is given by
\begin{eqnarray*}
\alpha{(G)}&=&\floor*{\frac{\vert{V(C)}\vert}{2}}+\sum\limits^{k}_{i=1}\vert{Supp(T_i)}\vert{+}\frac{\vert{V(\mathcal{G}_N(T_i))}\vert}{2}\\
   &=&\vert{Supp(G)}\vert{+}\floor*{\frac{\vert{V(\mathcal{G}_N(G))}\vert}{2}}.                     
\end{eqnarray*}

Now, suppose that $V(C)\nsubseteq{V(\mathcal{G}_N(G))}$. Note that $G$ can not satisfy the conditions of Proposition  \ref{ti2}, because  in Proposition  \ref{ti2} we have that $V(C)\subseteq{V(\mathcal{G}_N(G))}$.

\noindent\textbf{Case 1:} Suppose that $G$ satisfies the conditions of Proposition \ref{ti1}.

\noindent By Proposition \ref{ti1} we have that

\begin{eqnarray*}
Supp(G)&=&Supp(G\lbrace{v}\rbrace)\bigcup{Supp(G-G\lbrace{v}\rbrace)},\\
V(\mathcal{G}_N(G))&=&V(\mathcal{G}_N(G\lbrace{v}\rbrace))\bigcup{V(\mathcal{G}_N(G-G\lbrace{v}\rbrace)}\mbox{ and }\\
\emptyset&=&Supp(G)\cap{Core(G)}.
\end{eqnarray*}

Using the Lemma \ref{propalpha1}, we can say that the independence number of $G$ is
\begin{eqnarray*}
\alpha{(G)}&=&\vert{Supp(G\lbrace{v}\rbrace)}\vert+\vert{Supp(G-G\lbrace{v}\rbrace)}\vert{+}\frac{\vert{V(\mathcal{G}_N(G\lbrace{v}\rbrace))}\vert{+}\vert{V(\mathcal{G}_N(G-G\lbrace{v}\rbrace))}\vert}{2}\\
&=&\vert{Supp(G)}\vert{+}\frac{\vert{V(\mathcal{G}_N(G))}\vert}{2}=\vert{Supp(G)}\vert{+}\ceil*{\frac{\vert{V(\mathcal{G}_N(G))}\vert}{2}}\\
&=&\vert{Supp(G)}\vert{+}\ceil*{\frac{\vert{V(\mathcal{G}_N(G))}\vert-\vert{Supp(G)\cap{Core(G)}}\vert}{2}}.
\end{eqnarray*}

\noindent\textbf{Case 2:} Suppose that $G$ satisfies the conditions of Proposition \ref{ti4}.

This case is analogous to Case 1.

\noindent\textbf{Case 3:}  Suppose that $G$ satisfies the conditions of Proposition \ref{ti6}

Then by Proposition \ref{ti6}, we have that
\begin{eqnarray*}
Supp(G)&=&Supp(G\lbrace{v}\rbrace)\bigcup{Supp(G-G\lbrace{v}\rbrace)},\\
V(\mathcal{G}_N(G))&=&\left(V(\mathcal{G}_N(G\lbrace{v}\rbrace))\setminus{\lbrace{v}\rbrace}\right)\bigcup{V(\mathcal{G}_N(G-G\lbrace{v}\rbrace))}\mbox{ and }\\
\emptyset&=&Supp(G)\cap{Core(G)}.
\end{eqnarray*}

Using the Lemma \ref{propalpha1} we have that the independence number of $G$ is given by
\begin{eqnarray*}
\alpha{(G)}&=&\vert{Supp(G\lbrace{v}\rbrace)}\vert{+}\frac{\vert{V(\mathcal{G}_N(G\lbrace{v}\rbrace))}\vert}{2}+\vert{Supp(G-G\lbrace{v}\rbrace)}\vert{+}\frac{\vert{V(\mathcal{G}_N(G-G\lbrace{v}\rbrace))}\vert}{2}\\
 &=&\vert{Supp(G)}\vert{+}\ceil*{\frac{\vert{V(\mathcal{G}_N(G\lbrace{v}\rbrace))}\vert-1}{2}}+\frac{\vert{V(\mathcal{G}_N(G-G\lbrace{v}\rbrace))}\vert}{2}\\
           &=&\vert{Supp(G)}\vert{+}\ceil*{\frac{\vert{V(\mathcal{G}_N(G\lbrace{v}\rbrace))\setminus\lbrace{v}\rbrace}\vert}{2}}+\frac{\vert{V(\mathcal{G}_N(G-G\lbrace{v}\rbrace))}\vert}{2}\\
           &=&\vert{Supp(G)}\vert{+}\ceil*{\frac{\vert{V(\mathcal{G}_N(G))}\vert}{2}}\\
           &=&\vert{Supp(G)}\vert{+}\ceil*{\frac{\vert{V(\mathcal{G}_N(G))}\vert-\vert{Supp(G)\cap{Core(G)}}\vert}{2}}.
\end{eqnarray*}

\noindent\textbf{Case 4:} Suppose that $G$ satisfies the conditions of Proposition \ref{ti3}.

By Proposition \ref{ti3} we have that
\begin{eqnarray*}
Supp(G)&=&Supp(G\lbrace{v}\rbrace-v)\bigcup{Supp(G-G\lbrace{v}\rbrace)},\\
V(\mathcal{G}_N(G))&=&V(\mathcal{G}_N(G\lbrace{v}\rbrace-v))\bigcup{V(\mathcal{G}_N(G-G\lbrace{v}\rbrace))}\mbox{ and }\\
\emptyset&=&Supp(G)\cap{Core(G)}.
\end{eqnarray*}

Using the Lemmas \ref{propalpha1} and \ref{lematec1}, we have that the independence number of $G$ is given by
\begin{eqnarray*}
\alpha{(G)}&=&\vert{Supp(G\lbrace{v}\rbrace)}\vert+\vert{Supp(G-G\lbrace{v}\rbrace)}\vert+\frac{\vert{V(\mathcal{G}_N(G\lbrace{v}\rbrace))}\vert+\vert{V(\mathcal{G}_N(G-G\lbrace{v}\rbrace))}\vert}{2}\\
           &=&\vert{Supp(G\lbrace{v}\rbrace-v)}\vert{+}\vert{Supp(G-G\lbrace{v}\rbrace)}\vert{+}\frac{\vert{V(\mathcal{G}_N(G\lbrace{v}\rbrace-v))}\vert}{2}\\
           &&+\frac{\vert{V(\mathcal{G}_N(G-G\lbrace{v}\rbrace))}\vert}{2}\\
           &=&\vert{Supp(G)}\vert{+}\frac{\vert{V(\mathcal{G}_N(G))}\vert}{2}=\vert{Supp(G)}\vert{+}\ceil*{\frac{\vert{V(\mathcal{G}_N(G))}\vert}{2}}\\
           &=&\vert{Supp(G)}\vert{+}\ceil*{\frac{\vert{V(\mathcal{G}_N(G))}\vert-\vert{Supp(G)\cap{Core(G)}}\vert}{2}}.
\end{eqnarray*}

\noindent\textbf{Case 5:} Suppose that $G$ satisfies the conditions of Proposition \ref{ti5}.

By Proposition \ref{ti5}, we know that
\begin{eqnarray*}
Supp(G)&=&\bigcup\limits_{v\in{V(C)}}Supp(G\lbrace{v}\rbrace),\\ 
V(\mathcal{G}_N(G))&=&\bigcup\limits_{v\in{V(C)}}V(\mathcal{G}_N(G\lbrace{v}\rbrace)\mbox{ and}\\ 
V(C)&=&Supp(G)\cap{Core(G)}.
\end{eqnarray*}

Consider that $G-C=\bigcup\limits^{k}_{i=1}{T_i}$, where the $T_i$'s are the connected components of $G-C$. Using the lemmas \ref{propalpha2} and  \ref{lemac4ti5}, we have that the independence number of $G$ is given by
\begin{eqnarray*}
\alpha{(G)}&=&\floor*{\frac{\vert{V(C)}\vert}{2}}+\sum\limits^{k}_{i=1}\vert{Supp(T_i)}\vert{+}\frac{\vert{V(\mathcal{G}_N(T_i))}\vert}{2}\\
           &=&\frac{\vert{V(C)}\vert}{2}+\sum\limits^{k}_{i=1}\vert{Supp(T_i)}\vert{+}\frac{\vert{V(\mathcal{G}_N(T_i))}\vert}{2}\\
           &=&-\frac{\vert{V(C)}\vert}{2}+\vert{V(C)}\vert+\sum\limits^{k}_{i=1}\vert{Supp(T_i)}\vert{+}\frac{\vert{V(\mathcal{G}_N(T_i))}\vert}{2}\\
    &=&-\frac{\vert{Supp(G)\cap{Core(G)}}\vert}{2}+\sum\limits_{v\in{V(C)}}\vert{Supp(G\lbrace{v}\rbrace)}\vert+\frac{\vert{V(\mathcal{G}_N(G\lbrace{v}\rbrace))}\vert}{2}\\
    &=&\vert{Supp(G)}\vert{+}\frac{\vert{V(\mathcal{G}_N(G))}\vert-\vert{Supp(G)\cap{Core(G)}}\vert}{2}\\
    &=&\vert{Supp(G)}\vert{+}\ceil*{\frac{\vert{V(\mathcal{G}_N(G))}\vert-\vert{Supp(G)\cap{Core(G)}}\vert}{2}}.
\end{eqnarray*}
\end{proof}

We refer to the example given in the Introduction of this paper for an illustration of the use of theorems \ref{thm1} and \ref{thm2}. Further, we observe that $I=\lbrace{a,b,e,f,i,j,\ell,m,n,r}\rbrace\in\mathcal{I}(G)$ and $M=\lbrace{\lbrace{a,c}\rbrace,\lbrace{v,j}\rbrace,\lbrace{d,\ell}\rbrace,}$ ${\lbrace{o,m}\rbrace,\lbrace{p,n}\rbrace,\lbrace{r,q}\rbrace,\lbrace{f,g}\rbrace,\lbrace{h,i}\rbrace}\rbrace\in\mathcal{M}(G)$. Because $\vert{I}\vert=10$ and $\vert{M}\vert=8$.

In the following example, we use the theorems \ref{thm1} and \ref{thm2} to obtain $\alpha{(G)}$ and $\nu{(G)}$ of the unicyclic graph $G$ in Figure \ref{fthm2}. Consider $C$ the cycle of $G$.
Computing the support, core and $N$-vertices of $G$ we obtain that $Supp(G)=\lbrace{h,g,i,j,\ell,m,t,u,w,v,y,z}\rbrace$, $Core(G)=\lbrace{f,i,r,s,x}\rbrace$ and $V(\mathcal{G}_N(G))=\lbrace{a,b,c,d,e,m,n,o,p}\rbrace$.

\usetikzlibrary{shapes,snakes}
\tikzstyle{vertex}=[circle,draw,minimum size=1pt,inner sep=1pt]
\tikzstyle{edge} = [draw,thick,-]
\tikzstyle{matched edge} = [draw,snake=zigzag,line width=1pt,-]
\begin{figure}[h!]
\begin{center}
\begin{scriptsize}
\begin{center}
\begin{tikzpicture}[scale=0.7,auto,swap]
\node[draw,star,star points=9,star point ratio=0.6,label=below left:] (1) at (0,0) {$a$};
\node[draw,star,star points=9,star point ratio=0.6,label=below left:] (2) at (1,1) {$b$};
\node[draw,star,star points=9,star point ratio=0.6,label=below left:] (3) at (2,0) {$c$};
\node[draw,star,star points=9,star point ratio=0.6,label=below left:] (4) at (1.7,-1) {$d$};
\node[draw,star,star points=9,star point ratio=0.6,label=below left:] (5) at (0.3,-1) {$e$};
\node[draw,circle,label=below left:] (6) at (1,2) {$f$};
\node[draw,rectangle,label=below left:] (7) at (2,3) {$g$};
\node[draw,rectangle,label=below left:] (8) at (0,3) {$h$};
\node[draw,circle,label=below left:] (9) at (-1,0) {$i$};
\node[draw,rectangle,label=below left:] (10) at (-2,0) {$j$};
\node[draw,rectangle,label=below left:] (11) at (-2,1) {$\ell$};
\node[draw,rectangle,label=below left:] (12) at (-1,1) {$m$};
\node[draw,star,star points=9,star point ratio=0.6,label=below left:] (13) at (3,0) {$n$};
\node[draw,star,star points=9,star point ratio=0.6,label=below left:] (14) at (3,1) {$o$};
\node[draw,star,star points=9,star point ratio=0.6,label=below left:] (15) at (4,0) {$p$};
\node[draw,star,star points=9,star point ratio=0.6,label=below left:] (16) at (5,0) {$q$};
\node[draw,circle,label=below left:] (17) at (3,-1) {$r$};
\node[draw,circle,label=below left:] (18) at (4,-1) {$s$};
\node[draw,rectangle,label=below left:] (19) at (2,-2) {$t$};
\node[draw,rectangle,label=below left:] (20) at (5,-2) {$u$};
\node[draw,rectangle,label=below left:] (21) at (3,-2) {$v$};
\node[draw,rectangle,label=below left:] (22) at (4,-2) {$w$};
\node[draw,circle,label=below left:] (23) at (-1,-2) {$x$};
\node[draw,rectangle,label=below left:] (24) at (-1,-1) {$y$};
\node[draw,rectangle,label=below left:] (25) at (-2,-1) {$z$};
\node at (1,-3) {$G$};
\foreach \from/\to in {1/2,2/3,3/4,4/5,5/1,6/2,6/7,6/8,9/1,9/10,9/11,9/12,13/14,13/3,13/15,15/16,4/17,17/18,17/19,17/21,18/20,18/22,5/23,23/24,23/25} {
 \draw (\from) -- (\to);}
 \foreach \source / \dest in {1/2,3/4,6/7,13/14,15/16,17/21,18/20,23/24,9/12}
   \path[matched edge] (\source) -- (\dest);
\end{tikzpicture}
\caption{Unicyclic graph $G$ its support.}\label{fthm2}
\end{center}
\end{scriptsize}
 \end{center}
\end{figure}

Observe that $V(C)\subseteq{V(\mathcal{G}_N(G))}$. Therefore, by theorems \ref{thm1} and \ref{thm2}, the independence and matching numbers of $G$ are given by
\begin{eqnarray*}
\alpha{(G)}&=&\vert{Supp(G)}\vert{+}\floor*{\frac{\vert{V(\mathcal{G}_N(G))}\vert}{2}}=11+\floor*{\frac{9}{2}}=15\\
\nu{(G)}&=&\vert{Core(G)}\vert{+}\floor*{\frac{\vert{V(\mathcal{G}_N(G))}\vert-\vert{Supp(G)\cap{Core(G)}}\vert}{2}}=5+\floor*{\frac{9-0}{2}}=9.
\end{eqnarray*}

Notice that $I=\lbrace{g,h,j,\ell,m,u,v,w,t,z,y,a,c,o,q}\rbrace\in\mathcal{I}(G)$ and  $M=\lbrace{\lbrace{a,b}\rbrace,\lbrace{d,c}\rbrace,}$ ${\lbrace{f,g}\rbrace,\lbrace{n,o}\rbrace,\lbrace{p,q}\rbrace,\lbrace{r,v}\rbrace,\lbrace{s,u}\rbrace,\lbrace{x,y}\rbrace,\lbrace{m,i}\rbrace}\rbrace\in\mathcal{M}(G)$. Because $\vert{I}\vert=15$ and $\vert{M}\vert=9$.

In the next example, we use the theorems \ref{thm1} and \ref{thm2} to obtain $\alpha{(G)}$ and $\nu{(G)}$ of the unicyclic graph $G$ in Figure \ref{fthm4}. Consider $C$ the cycle of $G$. Computing the support, core and $N$-vertices of $G$ we obtain that $Supp(G)=\lbrace{b,c,d,e,j,\ell,u,v,z,w}\rbrace$, $Core(G)=\lbrace{f,a,i,u,v,z,w}\rbrace$ and $V(\mathcal{G}_N(G))=\lbrace{g,h}\rbrace$.

\usetikzlibrary{shapes,snakes}
\tikzstyle{vertex}=[circle,draw,minimum size=1pt,inner sep=1pt]
\tikzstyle{edge} = [draw,thick,-]
\tikzstyle{matched edge} = [draw,snake=zigzag,line width=1pt,-]
\begin{figure}[h!]
\begin{scriptsize}
\begin{center}
\begin{tikzpicture}[scale=1.2,auto,swap]
\node[draw,rectangle,label=below left:] (1) at (0,0) {$u$};
\node[draw,rectangle,label=below left:] (2) at (1,0) {$v$};
\node[draw,rectangle,label=below left:] (3) at (1,1) {$w$};
\node[draw,rectangle,label=below left:] (4) at (0,1) {$z$};
\node[draw,circle,label=below left:] (5) at (-1,1.5) {$a$};
\node[draw,rectangle,label=below left:] (6) at (-2,1) {$b$};
\node[draw,rectangle,label=below left:] (7) at (2,1) {$c$};
\node[draw,rectangle,label=below left:] (8) at (3,1) {$d$};
\node[draw,rectangle,label=below left:] (9) at (4,1) {$e$};
\node[draw,circle,label=below left:] (10) at (2,0) {$f$};
\node[draw,star,star points=9,star point ratio=0.6,label=below left:] (11) at (3,0) {$g$};
\node[draw,star,star points=9,star point ratio=0.6,label=below left:] (12) at (4,0) {$h$};
\node[draw,circle,label=below left:] (13) at (-1,0.5) {$i$};
\node[draw,rectangle,label=below left:] (14) at (-2,0) {$j$};
\node[draw,rectangle,label=below left:] (15) at (-1,-0.5) {$\ell$};
\foreach \from/\to in {1/2,2/3,3/4,4/1,5/6,5/4,10/7,10/8,10/9,10/3,10/11,11/12,13/1,13/14,13/15}{
\draw (\from) -- (\to);}
\foreach \source / \dest in {5/6,9/10,13/14,11/12,1/2,3/4}
   \path[matched edge] (\source) -- (\dest);
\end{tikzpicture}
\end{center}
\end{scriptsize}
 \caption{ Unicyclic graph $G$ and its support.}
\label{fthm4}
\end{figure}

Notice that  $V(C)\nsubseteq{V(\mathcal{G}_N(G))}$. Therefore, by theorems \ref{thm1} and \ref{thm2}, we have that the independence and matching numbers of $G$ are given by
\begin{eqnarray*}
\alpha{(G)}&=&\vert{Supp(G)}\vert{+}\ceil*{\frac{\vert{V(\mathcal{G}_N(G))}\vert-\vert{Supp(G)\cap{Core(G)}}\vert}{2}}=10+\ceil*{\frac{2-4}{2}}=9\\
\nu{(G)}&=&\vert{Core(G)}\vert{+}\floor*{\frac{\vert{V(\mathcal{G}_N(G))}\vert-\vert{Supp(G)\cap{Core(G)}}\vert}{2}}=7+\floor*{\frac{2-4}{2}}=6.
\end{eqnarray*}
Observe that $I=\lbrace{b,c,d,e,g,j,\ell,v,z}\rbrace\in\mathcal{I}(G)$ and $M=\lbrace{\lbrace{a,b}\rbrace,\lbrace{e,f}\rbrace,\lbrace{i,j}\rbrace,\lbrace{g,h}\rbrace,}$ ${\lbrace{u,v}\rbrace,\lbrace{z,w}\rbrace}\rbrace\in\mathcal{M}(G)$. Because $\vert{I}\vert=9$ and $\vert{M}\vert=6$.

\section*{Acknowledgments} Work supported by MATHAMSUD 18-MATH-01.
Maikon Toledo thanks CAPES for their support. V. Trevisan acknowledges
partial support of CNPq grants 409746/2016-9 and 303334/2016-9, and FAPERGS (Proj.\ PqG 17/2551-0001).


\begin{thebibliography}{00}


\bibitem{Maikon} L. E. Allem, D. A. Jaume, G. Molina, M. M. Toledo, and V. Trevisan, Independence and Matching Numbers of Unicyclic
Graphs From Null Space. submitted.

\bibitem{Alon:1998:AIN:286563.286564}
N. Alon  and N. Kahale. Approximating the Independence Number via the J -function. {\it Mathematical Programming}, 80:253-264, 1998.

\bibitem{lal}R. Bapat, A. Lal, and S. Pati, On algebraic connectivity of graphs with at most two points of articulation in each block.  Linear and Multilinear Algebra 60, 4, 415–432, 2012.

\bibitem{doob} D. M. Cvetkovi\'c, M. Doob and H. Sachs. Spectra of graphs: theory and application, vol. 87. Academic Pr, 1980.	

\bibitem{edmond} J. Edmonds. Paths, trees, and flowers. Canadian Journal of mathematics, 17(3), 449-467, 1965.

\bibitem{far} J. Farr, M. Khatirinejad, S. Khodadad, M. Monagan,  A graph theory package for Maple. In Proceedings of the 2005 Maple Conference, 260-271, 2005.	

\bibitem{Fie73}M. Fiedler, Algebraic connectivity of graphs, Czechoslovak Mathematical Journal, Vol. 23:298-305, 1973.

\bibitem{Fie75}M. Fiedler, A property of eigenvectors of nonnegative symmetric matrices and its application to graph theory, Czechoslovak Mathematical Journal, Vol. 25 No. 4:619-633, 1975.

\bibitem{Frieze:1990:INR:82922.82935}
 A. M. Frieze. On the Independence Number of Random Graphs. {\it Discrete Mathematics}, 81:171-175, 1990.

\bibitem{nulidade} S. Gong, Y. Fan and Z. Yin. On the nullity os graphs with pendant trees. {\it Linear Algebra and its Applications}, 433:1374-1380, 2010.

\bibitem{tree} D. A. Jaume and G. Molina. Null Decomposition of Trees. {\it Discrete Mathematics}, 341:836-850, 2018.

\bibitem{ming}G. J. Ming, and T. S. Wang, A relation between the matching number and laplacian spectrum of a graph. {\it Linear Algebra and its Applications} 325, 1-3, 71-74, 2001.

\bibitem{Nik13}M. Nikiforov, The influence of Miroslav Fiedler on spectral graph theory, {\it Linear Algebra and Its Applications}, Vol. 439: 818-821, 2013.

\bibitem{peter} P. Nylen,  Null space structure of tree-patterned matrices. {\it Linear algebra and its applications} 279.1-3 :153-161, 1998.

\bibitem{power}D. L. Powers,  Graph partitioning by eigenvectors. {\it Linear Algebra and its Applications} 101, 121-133, 1988.

\bibitem{SHEARER198383}J. B. Shearer. A note on the independence number of triangle-free graphs. {\it Discrete Mathematics}, 46:83-87, 1983.

\bibitem{tan}T. Xuezhong,  and B. Liu. On the nullity of unicyclic graphs. {\it Linear algebra and its applications} 408: 212-220, 2005.

\bibitem{sage} Sage Reference Manual: Graph Theory Release 8.5



















\end{thebibliography}
\end{document}